\numberwithin{equation}{section}
\DeclareMathSymbol{\shortminus}{\mathbin}{AMSa}{"39}
\theoremstyle{plain}
\newtheorem{thm}{\protect\theoremname}[section]
\newenvironment{proof}[1][\protect\proofname]{\par
	\normalfont\topsep6\p@\@plus6\p@\relax
	\trivlist
	\itemindent\parindent
	\item[\hskip\labelsep
	\scshape
	#1]\ignorespaces
}{%
	\endtrivlist\@endpefalse
}
\providecommand{\proofname}{Proof}
\theoremstyle{plain}
\newtheorem{lem}[thm]{\protect\lemmaname}
\theoremstyle{plain}
\newtheorem{prop}[thm]{\protect\propositionname}
\theoremstyle{plain}
\theoremstyle{definition}
\newtheorem{defn}[thm]{\protect\definitionname}
\theoremstyle{remark}
\newtheorem{rem}[thm]{\protect\remarkname}
\theoremstyle{plain}
\theoremstyle{definition}
\numberwithin{figure}{section}
\providecommand{\conjecturename}{Conjecture}
\providecommand{\corollaryname}{Corollary}
\providecommand{\definitionname}{Definition}
\providecommand{\examplename}{Example}
\providecommand{\lemmaname}{Lemma}
\providecommand{\propositionname}{Proposition}
\providecommand{\remarkname}{Remark}
\providecommand{\theoremname}{Theorem}
\newcommand{\eps}{\varepsilon}
\newtheorem{assump}[thm]{Assumption}
\newcommand{\loss}{\mathbf{L}}
\begin{document}
	
	%%%%%%%%%%%%%%%%%%%%%%%%%%%%%%%%%%%%%%%%%%%%%%%%%%%%%%%%%%%%%%%%
	%%%%%%%%%%%%%%%%%%%%%%%%%%%%%%%%%%%%%%%%%%%%%%%%%%%%%%%%%%%%%%%%
	%					TITLE PAGE
	%%%%%%%%%%%%%%%%%%%%%%%%%%%%%%%%%%%%%%%%%%%%%%%%%%%%%%%%%%%%%%%%
	%%%%%%%%%%%%%%%%%%%%%%%%%%%%%%%%%%%%%%%%%%%%%%%%%%%%%%%%%%%%%%%%
	
\title{Contagious McKean--Vlasov systems with \\ heterogeneous impact and exposure}
\author{Zachary Feinstein\thanks{Stevens Institute of Technology, School of Business, Hoboken, NJ 07030, USA. \tt{zfeinste@stevens.edu}} \;\;\; Andreas S{\o}jmark\thanks{The London School of Economics, Department of Statistics, London, WC2A 2AE, UK. \tt{a.sojmark@lse.ac.uk}}}

	\maketitle

	\begin{abstract} We introduce a heterogeneous formulation of a contagious McKean--Vlasov system, whose inherent heterogeneity comes from asymmetric interactions with a natural and highly tractable structure. It is shown that this formulation characterises the limit points of a finite particle system, deriving from a balance sheet based model of solvency contagion in interbank markets, where banks have heterogeneous exposure to and impact on the distress within the system. We also provide a simple result on global uniqueness for the full problem with common noise under a smallness condition on the strength of interactions, and we show that, in the problem without common noise, there is a unique differentiable solution up to an explosion time. Finally, we discuss an intuitive and consistent way of specifying how the system should jump to resolve an instability when the contagious pressures become too large. This is known to happen even in the homogeneous version of the problem, where jumps are specified by a `physical' notion of solution, but no such notion currently exists for a heterogeneous formulation of the system.
	\end{abstract}

\section{Introduction}

This paper studies a family of contagious McKean--Vlasov problems, modelling a large cloud of stochastically evolving mean-field particles, for which contagion materialises through asymmetric interactions. More concretely, the different particles suffer a negative impact on their `healthiness' (measured by their distance from the origin) as the probability of absorption increases for the other particles to which they are linked. Naturally, the degree to which the particles are affected depends on the strength of the links.

The study of this problem is motivated by the macroscopic quantification of systemic risk in large financial markets, when taking into account the heterogeneous nature of how financial institutions have an effect on and are exposed to the distress of other entities at the microscopic level. Specifically, in \cite{FS20}, the authors of the present paper have proposed a dynamic balance sheet based model for solvency contagion in interbank markets building on the Gai--Kapadia approach to financial contagion \cite{Gai-Kapadia}. When passing from a finite setting to a mean-field approximation, we will show here that this model leads to precisely the type of contagious McKean--Vlasov system with heterogeneity that we now describe.

For given parameters, to be specified below, the mean-field problem is formulated as a coupled system of conditional McKean--Vlasov equations
	\begin{equation}\label{CMV1}
\begin{cases}
dX_{u,v}(t) =  b_{u,v}(t)dt  + \!\sigma_{u,v}(t)dW_{u,v}(t)- d F\Bigl(  \displaystyle\int_0^t g_{u,v}(s) d\mathbf{L}_v(s)   \Big),
\\[6pt]
\mathbf{L}_{v}(t)  = \!\displaystyle\int_{\mathbb{R}^k\times\mathbb{R}^k}  \kappa(\hat{u},v) \mathbb{P}( t\geq \tau_{\hat{u},\hat{v} }\, | \, B_0) d\varpi(\hat{u},\hat{v}), \\[9pt]
 \tau_{u,v} =\inf \{ t > 0 : X_{u,v}(t) \leq 0 \}, \quad W_{u,v}(t)= \rho B_0(t)+\sqrt{1-\rho^2}B_{u,v}(t),\vspace{2pt}
\end{cases}
\end{equation}
where each $B_{u,v}$ is a Brownian motion independent of the `common' Brownian motion $B_0$. We take the initial conditions for $X_{u,v}(0)$ to be independent of the Brownian motions, and we require that solutions of \eqref{CMV1} must satisfy $\mathbf{L}_v(0)=0$ for all $v$ in the support of $\varpi$.

Note that $B_0$ serves as a `common factor' in the sense that the dynamics of the entire system are conditional upon its movements. In contrast, each $B_{u,v}$ models instead the random fluctuations that are specific to a given mean-field particle $X_{u,v}$. Notice also that, since $\mathbf{L}$ is required to be $B_0$-measurable, computing $t\mapsto \mathbb{P}(t\geq \tau_{u,v} \mid B_0)$ only relies on $B_{u,v}$ being a Brownian motion independent of $B_0$. Any relationship between the $B_{u,v}$'s is irrelevant for this, and solving the system for a different set of particle-specific Brownian drivers $B_{u,v}$ does not change $\mathbf{L}$ as long as they are all independent of $B_0$ and the initial conditions. When the exogenous correlation parameter $\rho$ is zero, we see that the common factor $B_0$ plays no role and so $\mathbf{L}$ becomes deterministic.

The key reason for our interest in \eqref{CMV1} is that it has a quite general, but also highly tractable, heterogeneous structure. This contrasts with the focus on purely symmetric formulations of the McKean--Vlasov problem in most of the existing literature, see e.g.~\cite{Bayraktar89, Cuchiero20, DIRT15a,DIRT15b, DNS21,HLS18,LS18b,LS18a, NS17}. As regards the measure $\varpi$ in \eqref{CMV1}, this is taken to be a probability distribution on $\mathbb{R}^k\times\mathbb{R}^k$ specifying the density or discrete support of the `indexing' vectors $(u,v)\in\mathbb{R}^k\times \mathbb{R}^k$. One should think of each $X_{u,v}$ as a tagged `infinitesimal' mean-field particle within a heterogeneous `cloud' or `continuum' of such particles. In this respect, the role of $\varpi$ is to model how these infinitesimal particles are distributed along a possible continuum of types, as identified by the `indexing' vectors which describe how the particles interact through the interaction kernel $\kappa$.

Naturally, if $\varpi$ is supported on a finite set of indexing vectors, then \eqref{CMV1} consists, in effect, only of a finite number of mean-field particles, but one should still think of it as there being an infinitude of identical particles for each `type', where the values that $\varpi$ assign to the indexing vectors give the proportions of these types (see also Remark \ref{sec:multi-type} below and \cite{NS18}).

The strength with which a given `infinitesimal' mean-field particle $X_{u,v}$ feels the impact of another one, say $X_{\hat{u},\hat{v}}$, is specified by the value $\kappa(\hat{u},v)\geq 0$, modelling an `exposure' of $X_{u,v}(t)$ to the probability of $X_{\hat{u},\hat{v}}$ being absorbed by time $t$. This is the nature of the \emph{contagious} element in this system: a higher likelihood of absorption for any given $X_{\hat{u},\hat{v}}$ puts upward pressure on the likelihood of absorption for $X_{u,v}$ in proportion to $\kappa(\hat{u},v)\geq0$, and likewise throughout the system, in turn forming a positive feedback loop. Notice that, for a given particle $X_{u,v}$, the vector $v$ influences only its exposure to impacts from other particles, while the vector $u$ influences only how it impacts the other particles (but, of course, the full exposure and impact depends also on the indexing vectors of all the other particles). This decomposition highlights why it is natural to work with a \emph{pair} of indexing vectors, and we will further exploit this structure below.

While we did not specify it, for the above interpretation of the contagious element, we were implicitly assuming that the functions $t\mapsto g_{u,v}(t)$ are all non-negative and that the map $x\mapsto F(x)$ is both non-negative and increasing. That way, the effect of an increasing probability of absorption is always to decrease the `healthiness' of each mean-field particle. The precise assumptions for the various parameters are presented in the next subsection: see Assumptions \ref{uv_assump}, \ref{MV_assump}, and \ref{X0_assump}.

\subsection{A specific formulation and structural conditions}\label{intro_applications}

For notational simplicity, given a pair of random vectors $(\mathbf{u},\mathbf{v})$ in $\mathbb{R}^k\times\mathbb{R}^k$ distributed according to $\varpi$, we let $S(\mathbf{u})$, $S(\mathbf{v})$, and $S(\mathbf{u},\mathbf{v})$ denote the support of the random vectors $\mathbf{u}$, $\mathbf{v}$, and $(\mathbf{u},\mathbf{v})$, respectively. The heterogeneity of the interactions in \eqref{CMV1} are  then structured according to the interaction kernel $(u,v)\mapsto \kappa(u,v)$, for $u\in S(\mathbf{u})$ and $v\in S(\mathbf{v})$, where $\kappa: S(\mathbf{u}) \times S(\mathbf{v}) \rightarrow \mathbb{R}_+$ is a continuous non-negative function. We shall rely on this notation throughout the paper. For the applications we are interested in, it is natural to take $\kappa(u,v):=u \cdot v$ together with a distribution $\varpi$ such that $u \cdot v \geq 0$ for all $u\in S(\mathbf{u})$ and $v \in S(\mathbf{v}) $. For concreteness, we will thus restrict to this case throughout, but we note that our arguments are performed in a way that can be easily extended to cover a general continuous non-negative interaction kernel $\kappa: S(\mathbf{u})\times S(\mathbf{v})  \rightarrow \mathbb{R}_+$.

Restricting to the case $\kappa(u,v)=u \cdot v$, the system \eqref{CMV1} can be rewritten as
	\begin{equation}\label{CMV}
\begin{cases}
dX_{u,v}(t) =  b_{u,v}(t)dt  + \!\sigma_{u,v}(t)dW_{u,v}(t) - d F\Bigl(  \displaystyle\sum_{l=1}^k v_l \!\int_0^t g_{u,v}(s) d\mathcal{L}_l(s)   \Big) \\[4pt]
\mathcal{L}_{l}(t)  = \!\displaystyle\int_{\mathbb{R}^k\times\mathbb{R}^k}  u_l \mathbb{P}( t\geq \tau_{u,v} \, | \, B_0) d\varpi(u,v), \quad l=1,\ldots,k,\\[10pt]
\tau_{u,v} =\inf \{ t > 0 : X_{u,v}(t) \leq 0 \}, \quad W_{u,v}(t)=\rho B_0(t) + \sqrt{1-\rho^2}B_{u,v}(t),\vspace{2pt}
\end{cases}
\end{equation}
where we insist on $\mathcal{L}_l(0)=0$ for $l=1,\ldots,k$. A nice feature of this formulation is the decomposition of $\mathbf{L}_v(t)$ into a weighted sum of $k$ \emph{contagion processes} $\mathcal{L}_l(t)$, for $l=1,\ldots,k$, thereby organising the feedback felt by each particle according to $k$ distinct characteristics modelled by the dimension $k$ of the indexing vectors $(u,v)\in \mathbb{R}^k\times \mathbb{R}^k$. More generally, \eqref{CMV} fixes a sensible choice of the interaction kernel $\kappa$, thereby eliminating any further free parameters. Of course, we are still left with a single free parameter $k \in \mathbb{N}$, but this should rightly be seen as a scale for the granularity of the analysis. We refer to \cite{FS20} for further details.

The relevant coefficients for the model of interbank contagion in \cite{FS20} are $F(x)=\log(1+x)$ and $g_{u,v}(t)=c_{u,v}\psi(t,T)$, for $t\in[0,T]$, where each  $c_{u,v}>0$ is a positive constant and $\psi(\cdot,T):[0,T]\rightarrow \mathbb{R}_+$ is a non-negative continuous decreasing function, modelling the rate at which outstanding liabilities are gradually settled over the period $[0,T]$. There are also closely related applications in neuroscience \cite{DIRT15a, DIRT15b, Inglis_Talay}, for which the relevant choices are $F(x)=x$ and $g_{u,v}(t)=c_{u,v}$ for constants $c_{u,v}>0$, although we note that one would also need to consider re-setting of the particles when they hit the origin in this case. These considerations motivate the following assumptions.

\begin{assump}[Distribution of the indexing vectors]\label{uv_assump}
	For a given probability distribution $\varpi$ on $\mathbb{R}^k\times\mathbb{R}^k$, determining the network structure, we write $(\mathbf{u},\mathbf{v})\sim \varpi$. We then assume that (i) the marginal supports $S(\mathbf{u})$ and $S(\mathbf{v})$ are both compact, and (ii) we have $u\cdot v\geq 0$ for all $u\in S(\mathbf{u})$ and $v\in S(\mathbf{v})$. We write $S(\mathbf{u},\mathbf{v})$ for the joint support, which is also compact.
\end{assump}

\begin{assump}[Properties of the coefficients]\label{MV_assump}
	The function $x 
	\mapsto F(x)$ is Lipschitz continuous, non-negative, and non-decreasing with $F(0)=0$. 
	Writing $b_{u,v}(s)=b(u,v,s)$, $\sigma_{u,v}(s)=\sigma(u,v,s)$, and $g_{u,v}(s)=g(u,v,s)$, these are all deterministic continuous functions in $(u,v,s)$. Furthermore, each $s\mapsto g_{u,v}(s)$ is non-negative and non-increasing. Finally, we impose the non-degeneracy conditions $\rho\in[0,1)$ and $c\leq \sigma_{u,v}\leq C$, for given $c,C>0$, uniformly in $(u,v)\in S(\mathbf{u},\mathbf{v})$.
\end{assump}
\begin{assump}[Initial conditions]\label{X0_assump} Letting $\nu_{0}(\cdot|u,v)$ denote the law of  $X_{u,v}(0)$, we assume that
$d\nu_{0}(x|u,v)=V_0(x|u,v)dx$ for a density $V_0(\cdot|u,v)\in L^\infty(0,\infty)$ satisfying $ V_0(x|u,v)\leq C_1 x^\beta$ for all $x$ near $0$, for some $\beta>0$, as well as	$\Vert V_0(\cdot|u,v) \Vert_\infty \leq C_2$  and $\int_0^\infty x d\nu_0(x|u,v) \leq C_3$, for given constants $C_1,C_2,C_3>0$, uniformly in $(u,v)\in S(\mathbf{u},\mathbf{v})$.
\end{assump}

In Section \ref{subsect:uniqueness_regularity}, we present some well-posedness results under these assumptions. Thereafter, we introduce a finite interacting particle system in Section \ref{Subsect:particle_sys}, which will correspond to the coupled McKean--Vlasov system \eqref{CMV} in the mean-field limit. We prove this under Assumption \ref{assump:paramters_emperical}, which in particular ensures that the above Assumptions \ref{uv_assump}, \ref{MV_assump}, and \ref{X0_assump} are all satisfied in the limit.

\subsection{A further look at applications and related literature}\label{sec:further_applications}

In many practical applications, heterogeneity plays a critical role---something that becomes particularly pertinent when seeking to understand the outcomes of contagion spreading through a complex system. In short, a homogeneous version of the McKean--Vlasov problem would be at risk of oversimplifying the conclusions that one can draw. At the same time, the streamlined analysis of a mean-field formulation can be highly instructive, so it becomes important to look for ways of exploiting such macroscopic `averaging' while not throwing away all the microscopic heterogeneity.

For some concrete examples of how \eqref{CMV} can capture features of realistic core-periphery networks in interbank markets, we refer to the discussion at the start of \cite[Section 3.1]{FS20} as well as \cite[Online Supplement C]{FS20}. Another tractable special case is the multi-type system with homogeneity within each type, as considered for the closely related mean-field problem in \cite{NS18} (see Remark \ref{sec:multi-type} below). In relation to this, it should be noted that \cite{NS18} also studies a mean-field type game, whereby jumps are ruled out due to the strategic possibility of disconnecting gradually from a given particle as it gets closer to absorption. We do not consider any game components in this paper.

The formulation \eqref{CMV} can also be highly relevant for integrate-and-fire models in mathematical neuroscience. For example, \cite[Chapter 13.4]{neuronal_dynamics} introduces a multi-type mean-field model, while \cite{Grazieschi2019} considers a random graph model, where dependent random synaptic weights determine if the spiking of neuron $i$ impacts the voltage potential of another neuron $j\neq i$. Phrasing \cite[Equation (12)]{Grazieschi2019} in the language of the particle system \eqref{particle_sys} in Section \ref{Subsect:particle_sys}, this corresponds to having $k=1$ and letting both $v^i:=\chi^i$ and $u^i:=\chi^i$ for a family of i.i.d.~Bernoulli random variables $\{\chi^i\}_{i=1}^n$, up to a scalar multiple. Thus, the directed connection from $i$ to $j$ is either active or inactive depending on the realisation of $\chi^i \chi^j$. It is conjectured in \cite{Grazieschi2019} that the mean-field limit of this particle system should take a particular form \cite[Equation (13)]{Grazieschi2019}, and we note that this conjecture is indeed confirmed by our convergence result, saying that the limit is \eqref{CMV} with $k=1$ and $\varpi=\text{Law}(\chi ,\chi)$ on $\mathbb{R}\times\mathbb{R}$ for a Bernoulli random variable $\chi$ with the same parameter as the $\chi^i$'s. In \cite{Inglis_Talay} the authors also consider an integrate-and-fire model with asymmetric synaptic weights, but there the scaling is chosen in such a way that the mean-field limit becomes homogeneous.

Finally, we mention an interesting line of research on agent-based modelling for macroeconomic business cycles \cite{bouchaud,bouchaud2020}. By looking at the average over a large class of agents, one is led to a nonlinear PDE formulation that is essentially the evolution equation for a homogeneous version of \eqref{CMV}. Our framework highlights a tractable route to rigorously incorporating heterogeneity in these models, even after averaging, as the analysis takes place at the mean-field level from the beginning.

\subsection{Summary of the main technical contributions}

The most important contribution of this paper is to give a rigorous justification of the convergence discussed in \cite{FS20}, connecting a finite particle system to a suitable notion of solution for the mean-field problem \eqref{CMV}. The precise statements are given in Section \ref{main_results} below, while the proofs follow in Sections 3, 4, and 5. As per Remark \ref{sec:multi-type}, our analysis also covers the mean-field problem of \cite{NS18}.

Our main technical contributions concern the proof of Theorem \ref{thm:mf_conv}, which provides the rigorous mean-field convergence result. Overall, the path we follow is similar to \cite{DIRT15b, LS18a, NS17}, but new ideas are needed to deal with the heterogeneity and higher generality, as we discuss here.

Since \eqref{CMV} allows for a continuum of types, it is not immediately clear that we can hope to have solutions so that the processes $X_{u,v}$ are measurable in the type vectors $(u,v)$. In relation to this, a first novel aspect compared to \cite{DIRT15b, LS18a, NS17} is the identification of a suitable notion of solution, given by the formulation \eqref{eq:limit_markov_kernel_law}--\eqref{CMV_relaxed}. This addresses the aforementioned issue by working instead with the laws of the processes through a notion of random Markov kernels (see Definition \ref{defn:random_markov_kernel}). Here the random aspect must also address both the fact that the common factor should produce conditional limiting laws and the fact that the finite empirical measures need not become adapted to the common factor in the limit, but we note that this last point is analogous to \cite{LS18a}.

Once we have a workable notion of solution, the second main difference from \cite{DIRT15b, LS18a, NS17} lies in how we set up the probabilistic framework for working with the limit points in Section \ref{subsect:characterise_limit}. This is ultimately what allows us to characterise the limit points as solutions to the desired McKean--Vlasov system. The approach is inspired in part by \cite{LS18a}, but the precise constructions are closely tied in with the identification of our new notion of solution, and the reliance of this on the concept of random Markov kernels necessitates a more careful approach based on disintegration of measures.

The details of the constructions in Section \ref{subsect:characterise_limit} are crucial to the proof of the key continuity result, Lemma \ref{lem:first_conv_result}, as well as the subsequent martingale arguments in Propositions \ref{prop:martingale_props} and \ref{prop:P_independent}, which complete the proof of Theorem \ref{thm:mf_conv}. The proofs of these two propositions draw inspiration from \cite{LS18a}, but the arguments need to be tailored to the setting of Section \ref{subsect:characterise_limit} and, most importantly, we need a more delicate use of Skorokhod's representation theorem to exploit the form of Lemma \ref{lem:first_conv_result}. In particular, the requisite continuity of the relevant functionals does not in general hold almost surely for the limiting laws, but only when passing along suitable sequences.

In terms of ensuring that limit points exist, the non-exchangeability of the particle system means that a little more care is needed in the arguments related to M1-tightness in Lemma \ref{lem:contagion_at_0}  and Proposition \ref{prop:tightness}. However, this does not present significant new challenges once it is observed that we can obtain good estimates for each particle that are uniform across the type space.

Finally, we return to the key Lemma \ref{lem:first_conv_result}, where we confront the convergence of the contagious component of the particle system. In an earlier version of this paper, we relied on an adaptation of \cite[Lemma 3.13]{LS18a}, but a referee brought our attention to an unfortunate lacuna in the proof of that lemma. We have resolved this through Proposition \ref{prop:hitting_time_cont} and its use in the proof of Lemma \ref{lem:first_conv_result}. In relation to the original work \cite{DIRT15b}, Proposition \ref{prop:hitting_time_cont} plays a role akin to \cite[Lemma 5.9]{DIRT15b} and Lemma \ref{lem:first_conv_result} then plays a role akin to \cite[Lemma 5.6 \& Prop.~5.8]{DIRT15b}. Already because of the common Brownian motion, the proof of \cite[Lemma 5.9]{DIRT15b} does not apply to our setting. Nevertheless, inspired by their approach, we can instead average over the laws of the particles (accounting also for the heterogeneity) and arrive at an almost sure continuity result for the first hitting time of zero as a function on path space with respect to the M1 topology. We then give a short self-contained proof of the key Lemma \ref{lem:first_conv_result}, which is more streamlined than its analogues \cite[Lemma 5.6 \& Prop.~5.8]{DIRT15b} in the sense that it follows more directly from the critical use of the M1 topology in Proposition \ref{prop:hitting_time_cont} rather than relying both on this and another technical M1 argument as in \cite[Prop.~5.8]{DIRT15b}.

\section{Well-posedness and mean-field convergence}\label{main_results}

As is known already from the homogeneous versions, we cannot in general expect \eqref{CMV}  to be well-posed globally as a continuously evolving system, see e.g.~\cite[Theorem 1.1]{HLS18}. Due to the feedback effect from the gradual loss of mass, the rate of change for the contagion processes $t\mapsto \mathcal{L}_l(t)$ can explode in finite time and jumps in the loss of mass may materialise from within the system itself.

First we show that this need not always be the case, by presenting a simple uniqueness result under a smallness condition on the interactions which guarantees that \eqref{CMV} evolve continuously in time independently of the realisations of the common factor. Moreover, we show that, when restricting to the idiosyncratic setting of $\rho=0$, where the common factor $B_0$ disappears, one can obtain a \emph{local} result on uniqueness and regularity up to an explosion time. Next, we introduce a finite particle system, which is a general formulation of the one coming from the interbank model in \cite{FS20}. It is this particle system that underpins our interest in \eqref{CMV}, and our main result comes down to showing that, in a suitable sense, there is weak convergence to the heterogeneous McKean--Vlasov system \eqref{CMV} as the number of particles tends to infinity.

\begin{rem}[Multi-type versions of the system]\label{sec:multi-type}

Also motivated by the modelling of contagion in financial markets, \cite{NS18} was the first to study a multi-type system very close to \eqref{CMV}. Specifically, the authors use a generalised Schauder fixed point approach to show existence of solutions for a coupled McKean--Vlasov system of the form
\begin{equation}\label{eq:NS_MV}
\begin{cases}
X^x_t = X^x_0 + Z^x_t + C(x)\displaystyle\int_{\mathcal{X}} G( \mathbb{P}(\tau^y > t) ) \mu(x,dy), \\[2pt]
\tau^x = \inf\{ t \geq 0 : X^x_t \leq 0 \},\quad x\in\mathcal{X},
\end{cases}
\end{equation}
where $\mathcal{X}$ is an abstract finite set and each $Z^x$ is an exogenous stochastic process with suitable regularity properties. A typical example would be $dZ^x_t=b_x(t)dt+\sigma_x(t)dW^x_t$. As per \cite[Remark 2.5]{NS18}, their analysis also allows for a common Brownian motion. By the assumptions on $G$ in  \cite{NS18}, we can define a non-decreasing and non-negative function $\tilde{F}:[0,1]\rightarrow \mathbb{R}_+$ by $\tilde{F}(s):=-G(1-s)$, for $s\in[0,1]$, with $\tilde{F}(0)=0$. Taking $k:=|\mathcal{X}|$, it is then straightforward to define $\varpi$ as a convex combination of $k$ point masses on fixed non-negative vectors $(\mathrm{u}^1,\mathrm{v}^1),\ldots,(\mathrm{u}^k,\mathrm{v}^k)$ in such a way that \eqref{eq:NS_MV} becomes equivalent to \eqref{CMV} with $F=\mathrm{Id}$, $g_{u,v} \equiv 1$, and
\[
\mathcal{L}_{l}(t)  = \!\displaystyle\int_{\mathbb{R}^k\times\mathbb{R}^k}  u_l\, \tilde{F}\bigl(\mathbb{P}( t\geq \tau_{u,v} \, | \, B_0) \bigr) d\varpi(u,v), \quad t \geq0,\quad l=1,\ldots,k.
\]
Since $\varpi$ defined from \eqref{eq:NS_MV} has finite support, \eqref{CMV} further simplifies to a system of $k$ representative processes $X_{\mathrm{u}^l,\mathrm{v}^l}(t)$, for $l=1,\ldots,k$, in this case. We note that it would not pose any problems to incorporate a function $\tilde{F}$, as above, into our analysis (in addition to $F$), but we leave this out to avoid clouding the notation (noting also that our main motivating applications do not call for it).
\end{rem}

\subsection{On the jumps of the heterogeneous McKean--Vlasov system}\label{sec:jump_sizes}

 It is known already for the simplest homogeneous version of \eqref{CMV} (as studied in \cite{DIRT15b, HLS18, NS17}) that, if the feedback effect is strong enough, then solutions \emph{cannot} be continuous globally in time \cite[Theorem 1.1]{HLS18} and one is furthermore left with a \emph{non-unique} choice of the jump times and jump sizes even when restricting to c\`adl\`ag solutions (see \cite[Example 2.2]{HLS18} for a stylised example).

 For clarity, let us consider the multi-type version of \eqref{CMV} with $F=\mathrm{Id}$ and $g_{u,v}\equiv1$, as just discussed in Remark \ref{sec:multi-type} above (concerning \eqref{eq:NS_MV}, we also take $\tilde{F}:=\mathrm{Id}$). Due to the finite support of $\varpi$, we can then let $\pi_l := \varpi(\{ (\mathrm{u}^l ,\mathrm{v}^l)  \}) $ and observe that the contagion processes simplify to
 \[
 \mathcal{L}_{l}(t)  =\pi_l \mathbb{P}( t\geq \tau_{\mathrm{u}^l,\mathrm{v}^l} \, | \, B_0), \quad \text{for} \quad l=1,\ldots,k.
 	\]
 In the homogeneous problem, it is natural to let the jump times and jump sizes be fixed by the so-called physical jump condition, first introduced in \cite{DIRT15b}. A possible analogue of this for the multi-type system \eqref{eq:NS_MV} was briefly discussed in \cite{NS18}. The authors did not attempt to work with this condition, but rather emphasised that it would appear less clear if there is a condition more natural than others in the multi-type setting. After some straightforward manipulations of \cite[Equations (2.16)-(2.17)]{NS18}, the condition discussed in \cite{NS18} can be seen to take the simpler form
\begin{multline}\label{eq:NS_PJC_simplified}
\qquad\qquad\qquad\Delta \mathcal{L}_l(t) = \pi_l \mathbb{P}\bigl(X_l(t-)\in [0,D_t] ,\, t\leq\tau_l \, | \, B_0\bigr), \quad l=1,\ldots,k, \\ D_t := \inf \Bigl\{ z > 0     : 
\sum_{i=1}^{k}  \mathrm{v}^i_j  \pi_i \mathbb{P}\bigl(X_i(t-)\in [0,z] ,\, t\leq\tau_i \, | \, B_0\bigr)   < z\;\; \forall\, j=1,\ldots, k,
\Bigr\},\qquad\quad
\end{multline}
where we have set $X_l:=X_{\mathrm{u}^l,\mathrm{v}^l}$ and $\tau_l:=\tau_{\mathrm{u}^l,\mathrm{v}^l}$ for $l=1,\ldots, k$. If $k=1$, this is precisely the aforementioned physical jump condition for the homogeneous problem. When $k\geq 2$, however, an issue presents itself, which highlights some of the new difficulties in the multi-type setting. Indeed, \eqref{eq:NS_PJC_simplified} dictates that, for each type $l=1,\ldots,k$, the jump size of $\mathcal{L}_l$ at time $t$ is equal to a multiple $\pi_l$ of the total mass given by the density of $X_l(t-)$ on $[0,D_t]$. At the same time, a careful inspection of the dynamics in \eqref{CMV} reveal that such jump sizes, for each type, must cause mean-field particles of type $j$ to shift exactly the mass given by its density on $[0,\sum_{i=1}^k\mathrm{v}^i_j\pi_i\Delta\mathcal{L}_i(t)]$ through the origin, meaning that it is this amount of mass that ends up being absorbed. However, this does \emph{not} agree with the previous observation, unless it happens that $D_t$ equals $\sum_{i=1}^k\mathrm{v}^i_j\pi_i\Delta\mathcal{L}_i(t)$ for all $j=1,\ldots,k$ at the given time. Thus, the jump condition \eqref{eq:NS_PJC_simplified} will not in general be consistent with the prescribed dynamics of the mean-field particles.

Throughout the paper, we shall often refer to the total loss process
\begin{equation}\label{eq:total_loss}
\mathbf{L}_v(t):=\sum_{l=1}^k v_l \mathcal{L}_l(t),\quad \text{for all} \quad t\geq 0, \; v\in S(\mathbf{v}),
\end{equation}
defined in agreement with the general formulation \eqref{CMV1}. With this notation, it was suggested in \cite{FS20} that a sensible condition for the jump sizes could amount to insisting that, for every $t\geq 0$,
\begin{equation}\label{eq:mf_cascade_cond}
\begin{cases}
\Delta \mathcal{L}_l(t)=\Xi_l\bigl(t, \Delta \mathbf{L}_{(\cdot)}(t) \bigr),\quad \displaystyle \Delta \mathbf{L}_v(t) =  \lim_{\varepsilon\downarrow 0}  \lim_{m\rightarrow \infty}\Delta_{t,v}^{(m,\varepsilon)}, \\[7pt]
\displaystyle \Delta_{t,v}^{(m,\varepsilon)} :=\sum_{l=1}^kv_l \Xi_l(t;\varepsilon + \Delta_{t,(\cdot)}^{(m-1,\varepsilon)}) ,\quad \displaystyle \Delta_{t,v}^{(0,\varepsilon)}:= \sum_{1=1}^kv_l \Xi_l(t;\varepsilon),
\end{cases}
\end{equation}
almost surely, where the maps $\Xi_l$, for $l=1,\ldots, k$, are defined by
\begin{align}\label{eq:Xi_map}
\Xi_l(t;f) &:= \int_{\mathbb{R}^k\times\mathbb{R}^k} u_l \mathbb{P} \bigl(     
X_{u,v}(t-)\in[0,\Theta_{u,v}(t;f)],\; t\leq \tau_{u,v} \, | \, B_0 \bigr) d\varpi(u,v), \\[3pt]
\Theta_{u,v}(t;f) &:= F\biggl(  \int_0^{t-}\!\!g_{u,v}(s)d\mathbf{L}_v(s) + g_{u,v}(t)f(v)   \biggr) - F\biggl(  \int_0^{t-}\!\!g_{u,v}(s)d\mathbf{L}_v(s)   \biggr),\label{eq:Theta_map}
\end{align}
for all functions $f: \mathbb{R}^k \rightarrow \mathbb{R}_+$. This corresponds to shocking the system by a small amount and tracking the contagious effects for infinitely many rounds ($m\rightarrow \infty$) before then sending the order of the shock to zero ($\varepsilon \downarrow 0$). It was observed in \cite[Prop.~3.5]{FS20} that any c\`adl\`ag solution \eqref{CMV} must satisfy $\Delta \mathbf{L}_v(t) = \sum_{l=1}^k v_l \Xi_l\bigl(t, \Delta \mathbf{L}_{(\cdot)}(t) \bigr)$ for all $v \in S(\mathbf{v})$ at every $t\geq 0$. Noting that $\Delta_{t,v}^{(m,\varepsilon)}$ is a bounded sequence, increasing in both $m$ and $\varepsilon$, we can apply dominated convergence in \eqref{eq:mf_cascade_cond} to see that the aforementioned constraint is satisfied, so \eqref{eq:mf_cascade_cond} is consistent with the dynamics in \eqref{CMV}.

In Section \ref{Subsect:particle_sys}, we shall work with a similar looking condition \eqref{particle_cascade_cond1} for the jump sizes of the finite particle system. We suspect that \eqref{eq:mf_cascade_cond} will be satisfied by the limit points of this particle system. However, we have not yet been able to prove this. Compared to the arguments in \cite{DIRT15b, LS18a}, which show that limit points of the homogeneous particle system must satisfy the physical jump condition mentioned above, it becomes harder to work with notions of minimality for comparing solutions and it is complicated to keep track of the influence of the heterogeneity.

In this paper, the only result we prove involving \eqref{eq:mf_cascade_cond} is the last part of Theorem \ref{thm:idio_reg} below. Other than that, any further analysis of jump size conditions is left for future research along with the question of whether \eqref{eq:Xi_map} is satisfied by the limits points of the particle system.

\subsection{Two results on uniqueness and regularity}\label{subsect:uniqueness_regularity}

In \cite[Theorem 3.4]{FS20}, a simple continuity condition was derived for a particular version of \eqref{CMV}, which ensures continuity globally in time for all realisations of the common noise $B^0$. Here we show that this condition naturally leads to pathwise global uniqueness of \eqref{CMV}.

\begin{thm}[Common noise uniqueness under smallness condition]\label{thm:global_unique} Let Assumptions \ref{uv_assump} and \ref{MV_assump} be satisfied.  Moreover, suppose that all the mean-field particles $X_{u,v}$ have initial densities $V_{0}(\cdot|u,v)$ satisfying the condition
	\begin{equation}\label{smallness_cond}
\Vert V_{0}(\cdot|u,v) \Vert_{\infty} < g_{u,v}(0)\Vert F \Vert_{\mathrm{Lip}} \frac{1 }{\max\{u\cdot \hat{v} \; | \;  \hat{v}\in S(\mathbf{v}) \;\mathrm{s.t.}\; u\cdot \hat{v} >0\}},
\end{equation}
for all  $(u,v) \in S(\mathbf{u},
\mathbf{v})$, where $1/\max \emptyset=+\infty$. Then each $X_{u,v}$ must be continuous in time, for any solution to \eqref{CMV}, and there is uniqueness of solutions.
\end{thm}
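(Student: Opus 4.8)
The plan is to run a Picard-type fixed-point argument on the map that sends a candidate loss profile to the loss profile it induces through the dynamics \eqref{CMV}, and to show that the smallness condition \eqref{smallness_cond} makes this map a strict contraction on a suitable space of loss trajectories, simultaneously forcing continuity. Fix a realisation of the common noise $B_0$. For each $l=1,\ldots,k$ think of a candidate family $(\mathcal{L}_l)_{l\le k}$ of non-decreasing càdlàg functions; feeding this into the SDE for $X_{u,v}$ (which, given the $\mathcal{L}_l$'s, is a plain Brownian-type SDE with an additive finite-variation forcing term $-F(\sum_l v_l\int_0^t g_{u,v}\,d\mathcal{L}_l)$), solve it, compute the conditional absorption probabilities $\mathbb{P}(t\ge\tau_{u,v}\mid B_0)$, integrate against $u_l\,d\varpi$, and obtain a new family $(\widehat{\mathcal{L}}_l)$. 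A fixed point of $\Phi:(\mathcal{L}_l)\mapsto(\widehat{\mathcal{L}}_l)$ is exactly a solution of \eqref{CMV}.

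First I would record the key density estimate: under Assumptions \ref{uv_assump}--\ref{MV_assump}, the law of $X_{u,v}(t)$ has a density bounded by $\|V_0(\cdot|u,v)\|_\infty$ for all $t$, uniformly in the noise. This is the standard fact that adding a (random, but conditionally on $B_0$ deterministic given the loss path) downward drift/shift to a process with an absorbing barrier cannot increase the sup-norm of the sub-probability density on $(0,\infty)$ — a reflection/comparison argument, and indeed this is what Section 4.1 establishes. The consequence I need is the Lipschitz-in-the-barrier bound
\[
\bigl|\,\mathbb{P}(t\ge\tau^{1}_{u,v}\mid B_0)-\mathbb{P}(t\ge\tau^{2}_{u,v}\mid B_0)\,\bigr|\;\le\;\|V_0(\cdot|u,v)\|_\infty\,\sup_{s\le t}\Bigl|\,\textstyle\int_0^s g_{u,v}\,d(\mathbf{L}^1_v-\mathbf{L}^2_v)\Bigr|,
\]
obtained by comparing the two absorbed processes, which differ only through the deterministic forcing, and bounding the extra mass swept across the origin by (density bound) $\times$ (displacement). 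Combining this with $F$ being $\|F\|_{\mathrm{Lip}}$-Lipschitz, $g_{u,v}\le g_{u,v}(0)$, and $\mathbf{L}_v=\sum_l v_l\mathcal{L}_l$, one gets
\[
\bigl|\widehat{\mathcal{L}}^1_l(t)-\widehat{\mathcal{L}}^2_l(t)\bigr|\le \int u_l\,\|V_0(\cdot|u,v)\|_\infty\,g_{u,v}(0)\,\|F\|_{\mathrm{Lip}}\,\sup_{s\le t}\bigl|\mathbf{L}^1_v(s)-\mathbf{L}^2_v(s)\bigr|\,d\varpi(u,v).
\]
Now \eqref{smallness_cond} says precisely that $\|V_0(\cdot|u,v)\|_\infty\,g_{u,v}(0)\,\|F\|_{\mathrm{Lip}}\,(u\cdot\hat v)<1$ for every relevant $\hat v$; taking $\hat v=v$ in the integrand and summing $\sum_l v_l$ on both sides yields $\sup_{s\le t}|\widehat{\mathbf{L}}^1_v(s)-\widehat{\mathbf{L}}^2_v(s)|\le \theta\,\sup_{s\le t}|\mathbf{L}^1_v(s)-\mathbf{L}^2_v(s)|$ with $\theta=\sup_{(u,v)\in S(\mathbf u,\mathbf v)}\|V_0(\cdot|u,v)\|_\infty g_{u,v}(0)\|F\|_{\mathrm{Lip}}(u\cdot v)<1$ by compactness and continuity. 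Hence $\Phi$ is a strict contraction in the sup-norm on $[0,T]$ for every $T$, giving a unique fixed point; pathwise uniqueness across all solutions follows since any solution is such a fixed point. Continuity of each $X_{u,v}$ is then automatic: the fixed point $\mathbf{L}_v$ is a uniform limit of iterates started from the continuous $\mathbf{L}_v\equiv 0$, each iterate being continuous because $t\mapsto\mathbb{P}(t\ge\tau_{u,v}\mid B_0)$ is continuous whenever the absorbed process has a bounded density (no atoms), so the limit is continuous and has no jumps; one then checks $X_{u,v}(t)=X_{u,v}(0)+\int_0^t b+\int_0^t\sigma\,dW-F(\sum_l v_l\int_0^t g_{u,v}\,d\mathcal{L}_l)$ has continuous paths.

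The main obstacle is the density/barrier Lipschitz estimate and, relatedly, handling the degenerate case where the supremum in \eqref{smallness_cond} is over the empty set (i.e. $u\cdot\hat v=0$ for all $\hat v\in S(\mathbf v)$). In the latter case the convention $1/\max\emptyset=+\infty$ makes the condition vacuous for that $u$, which is consistent because such a particle contributes $u_l\cdot(\cdots)$ with $u\cdot v=0$, so it drops out of the contraction estimate entirely — but one must be careful that $v$ can still be nonzero and that the particle still feels $\mathbf{L}_v=\sum_l v_l\mathcal{L}_l$; the point is that it does not feed back into the $\widehat{\mathcal{L}}_l$'s with positive weight $u_l$ in a way that violates the bound, and a short measurability/splitting argument over $\{u\cdot v>0\}$ versus $\{u\cdot v=0\}$ closes this. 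The density bound itself I would import from Section 4.1 rather than reprove; the only genuinely new work here is assembling the pieces into the contraction, which, given Assumptions \ref{uv_assump}--\ref{MV_assump}, is routine once the estimate is in hand.
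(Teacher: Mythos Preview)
Your approach is essentially the paper's: both routes combine the density bound of Section~4.1 with a Lipschitz-in-the-barrier estimate for the absorption probabilities, and then use \eqref{smallness_cond} to produce a strict contraction in a sup norm on the loss processes. The paper phrases it as a direct comparison of two coupled solutions (following \cite{LS18b}) rather than as a Picard map, and it obtains the continuity statement by citing \cite[Theorem~3.4]{FS20} rather than via convergence of iterates, but these are cosmetic differences.

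There is, however, one genuine slip in your contraction step. In your displayed bound for $|\widehat{\mathcal{L}}^1_l-\widehat{\mathcal{L}}^2_l|$, the pair $(u,v)$ is the \emph{integration variable}, and the factor $\sup_{s\le t}|\mathbf{L}^1_v(s)-\mathbf{L}^2_v(s)|$ refers to that integration-variable $v$. When you then ``sum $\sum_l v_l$ on both sides'' you are implicitly introducing a \emph{different} exposure vector, say $\tilde v$, and the resulting inequality has $(\tilde v\cdot u)$ on the outside but still $\mathbf{L}_v$ (integration variable) on the inside. Consequently the inequality $\sup_{s\le t}|\widehat{\mathbf{L}}^1_{\tilde v}-\widehat{\mathbf{L}}^2_{\tilde v}|\le \theta\,\sup_{s\le t}|\mathbf{L}^1_{\tilde v}-\mathbf{L}^2_{\tilde v}|$ does \emph{not} follow, and your $\theta=\sup_{(u,v)}\|V_0(\cdot|u,v)\|_\infty g_{u,v}(0)\|F\|_{\mathrm{Lip}}(u\cdot v)$ is not the right constant. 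The fix, which is exactly what the paper does in Section~4.2, is to work throughout in the norm
\[
\Vert \mathbf{L}^1-\mathbf{L}^2\Vert_t^\star:=\sup_{v\in S(\mathbf{v})}\sup_{s\le t}|\mathbf{L}^1_v(s)-\mathbf{L}^2_v(s)|,
\]
pull this out of the integral, and then take the supremum over $\tilde v\in S(\mathbf{v})$ on the left. The contraction factor becomes $\sup_{\tilde v\in S(\mathbf{v})}\int(\tilde v\cdot u)\,g_{u,v}(0)\|F\|_{\mathrm{Lip}}\|V_0(\cdot|u,v)\|_\infty\,d\varpi(u,v)$, and \eqref{smallness_cond} (which involves $\max_{\hat v}(u\cdot\hat v)$, not just $u\cdot v$) is precisely what makes this strictly less than one. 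Once you repair this, your argument goes through and matches the paper's.
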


We note that \eqref{smallness_cond} corresponds to \cite[(3.15)]{FS20} except for an explicit choice of $g_{u,v}$ in \cite{FS20}. Thus, the statement about continuity in time of each $X_{u,v}$ follows precisely as in \cite[Theorem 3.4]{FS20}. The uniqueness part of the theorem is proved in Section \ref{Sect:com_nois}, using the ideas from \cite{LS18b}. Unfortunately, we are unable to say much about general uniqueness and regularity properties of \eqref{CMV} in the absence of the above smallness condition \eqref{smallness_cond}. Nevertheless, we do have the following \emph{local} uniqueness and regularity result for the purely `idiosyncratic' problem without the common noise. 

\begin{thm}[Idiosyncratic regularity and local uniqueness up to explosion]\label{thm:idio_reg} Let Assumptions \ref{uv_assump}, \ref{MV_assump}, and \ref{X0_assump} be in place. Then there exists a solution to \eqref{CMV} on some interval $[0,T_\star)$ for which the contagion processes $\mathcal{L}_1\ldots,\mathcal{L}_k$ are continuously differentiable up to the explosion time
	\[
	T_{\star}:= \sup \Bigl\{ t>0 :  \sum_{l=1}^k \Vert \partial_t\mathcal{L}_l(\cdot) \Vert_{L^2(0,t)} < \infty   \Bigr\} >0,
	\]
and, for every $t<T_\star$, we have $\partial_s \mathcal{L}_l(s) \leq K s^{-(1-\beta)/2}$ on $[0,t]$ for some constant $K> 0$, for each $l=1,\ldots, k$. Moreover, if there is another c\`adl\`ag solution to \eqref{CMV} with jump sizes smaller than or equal to those given by the cascade condition \eqref{eq:mf_cascade_cond}, then such a solution must coincide with the above solution on $[0,T_\star)$.
\end{thm}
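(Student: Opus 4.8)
The plan is to adapt the comparison and local regularity machinery of \cite{HLS18} to the heterogeneous setting, working with the coupled system of $k$ contagion processes rather than a single one. Since $\rho = 0$, the common noise disappears and each $\mathcal{L}_l$ is deterministic, so the system \eqref{CMV} reduces to a coupled family of nonlinear Volterra-type fixed point equations for $(\mathcal{L}_1,\dots,\mathcal{L}_k)$, driven by the law of a one-dimensional diffusion with a moving absorbing boundary at the origin. The first step is the heat-kernel density bound established in Section~4.1 (which we may assume), giving a bound of the form $V_{u,v}(t,x) \leq K t^{-(1-\beta)/2}$ near the origin uniformly in $(u,v) \in S(\mathbf{u},\mathbf{v})$, using the non-degeneracy $c \leq \sigma_{u,v} \leq C$ and the initial density decay $V_0(x|u,v) \leq C_1 x^\beta$. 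From this one reads off, via the formula $\Delta\mathcal{L}_l$-free (continuous) version of \eqref{CMV}, that $\partial_s\mathcal{L}_l(s) \leq K s^{-(1-\beta)/2}$ on any interval where a continuously differentiable solution exists: the loss rate of each contagion process is controlled by the flux of the diffusion density through the origin, which is itself controlled by the density bound and the Lipschitz constant of $F$ together with the bounded weights $u_l$.

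Next I would set up a Picard/monotone iteration on a short time interval $[0,t_0]$ to produce a solution with $\mathcal{L}_l \in C^1$: one feeds a candidate loss profile into the killed diffusion, computes the induced flux through the origin for each $l$, and shows this map is a contraction (or monotone and bounded) in a suitable weighted norm on $[0,t_0]$ for $t_0$ small, exploiting that the singular factor $s^{-(1-\beta)/2}$ is integrable since $\beta > 0$ (indeed $L^2$-integrable near $0$, which is why the explosion time is phrased via the $L^2(0,t)$ norm of $\partial_t\mathcal{L}_l$). One then continues the solution forward as long as $\sum_l \|\partial_t\mathcal{L}_l\|_{L^2(0,t)} < \infty$, defining $T_\star$ as the supremum of such times; strict positivity of $T_\star$ follows from the short-time construction, and on $[0,T_\star)$ the solution is automatically continuous (no jumps), so the cascade condition \eqref{eq:mf_cascade_cond} is satisfied trivially there. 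Regularity at the boundary points and the continuity of $t \mapsto \partial_t\mathcal{L}_l(t)$ (as opposed to mere $L^2$ membership) come from standard parabolic estimates for the killed diffusion, as in \cite[Section~3]{HLS18}.

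For uniqueness among all c\`adl\`ag solutions satisfying the cascade condition, the key point is a comparison argument: given two such solutions, one shows that on any interval before either has a jump, the difference of the loss vectors satisfies a Gr\"onwall-type inequality (again using the density bound and Lipschitz-$F$ to linearise the flux map), forcing them to agree; and one shows that neither solution can jump strictly before $T_\star$, because a jump would be incompatible with $\sum_l\|\partial_t\mathcal{L}_l\|_{L^2(0,t)} < \infty$ near that time, and the cascade condition pins down the value if it did occur. I expect the main obstacle to be the comparison/uniqueness step in the presence of the $k$-fold coupling: the scalar crossing-time arguments of \cite{HLS18} must be replaced by a componentwise monotonicity argument for the vector $(\mathcal{L}_1,\dots,\mathcal{L}_k)$, carefully handling the fact that each particle's feedback is the weighted sum $\sum_l v_l\mathcal{L}_l$ while its contribution enters each $\mathcal{L}_l$ with weight $u_l$ — one needs the nonnegativity $u \cdot v \geq 0$ from Assumption~\ref{uv_assump} to keep the feedback monotone, and the uniform compactness of $S(\mathbf{u},\mathbf{v})$ to get constants that are uniform in the indexing vectors. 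The singular time-weight near $0$ also requires care so that the Gr\"onwall argument closes despite $\partial_t\mathcal{L}_l$ not being bounded at the origin.
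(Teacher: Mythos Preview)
Your existence argument is essentially the paper's: a Banach fixed point in a weighted space of inputs $\ell$ with $\ell_v'(s)\leq A s^{-(1-\beta)/2}$, using a comparison estimate of the form
\[
\bigl(\Gamma[\ell]_v(t)-\Gamma[\bar\ell\,]_v(t)\bigr)^+\leq C\,\Vert(\ell-\bar\ell\,)^+\Vert_t^\star\int_0^t(t-s)^{-1/2}\,d\Gamma[\ell]_v(s),
\]
followed by bootstrapping to the explosion time. That part is fine.

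The uniqueness argument, however, has a real gap. Your plan is to take two c\`adl\`ag solutions, show neither jumps before $T_\star$, and then close a Gr\"onwall inequality on the difference. But $T_\star$ is defined via the $L^2$ norm of the derivative of the \emph{regular} solution; a generic c\`adl\`ag solution $\bar{\mathbf L}$ need not be differentiable at all, so the claim that ``a jump would be incompatible with $\sum_l\Vert\partial_t\mathcal L_l\Vert_{L^2(0,t)}<\infty$'' has no content for $\bar{\mathbf L}$. More concretely, the comparison inequality above is one-sided: to bound $(\bar{\mathbf L}_v-\mathbf L^\star_v)^+$ you would need to integrate against $d\bar{\mathbf L}_v$, and you have no a priori control on that measure. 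The contraction from the fixed-point step only gives uniqueness \emph{within} the regular class, not among all c\`adl\`ag solutions satisfying the cascade condition.

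The paper closes this gap with an $\varepsilon$-approximation (``monotonicity and trapping'') device that you do not mention. One introduces auxiliary problems obtained by artificially removing an $\varepsilon$-amount of initial mass (counted as loss at time zero) and shifting the initial data; these admit regular solutions $\mathbf L^\varepsilon$ with regularity uniform in $\varepsilon$. The key facts are: (i) $\mathbf L^\varepsilon_v>\bar{\mathbf L}_v$ strictly on the interval of continuity, for \emph{any} generic c\`adl\`ag solution $\bar{\mathbf L}$; and (ii) $\Vert\mathbf L^\varepsilon-\mathbf L^\star\Vert_{t_1}^\star\to0$ as $\varepsilon\downarrow0$ on a short interval, using the comparison lemma in the direction where the regular object sits on the left. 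Together these trap $\bar{\mathbf L}$ between $\mathbf L^\star$ and $\mathbf L^\varepsilon$, forcing $\bar{\mathbf L}=\mathbf L^\star$ locally; the cascade condition is what rules out $\bar{\mathbf L}$ jumping at time zero (or at any restart time), so the trapping can be iterated up to $T_\star$. Without this $\varepsilon$-envelope construction your uniqueness step does not go through.
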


The proof of Theorem \ref{thm:idio_reg} is the subject of Section \ref{Sect:Idio}.  It is proved by suitably adapting the arguments from \cite{HLS18}. For Theorem \ref{thm:idio_reg} to be truly interesting, one would need to know that the cascade condition \eqref{eq:mf_cascade_cond} is satisfied by the limit points of the particle system presented in the next section. We believe this to be true, but the question is left for future research. Likewise, this paper does not address whether there exist solutions satisfying \eqref{eq:mf_cascade_cond} beyond their first jump time.

\subsection{The connection to a finite particle system}\label{Subsect:particle_sys}

In this section, we introduce a general form of the particle system studied in \cite{FS20}. For details on the motivating application to solvency contagion, we refer to the balance sheet based formulation in \cite[Section 2]{FS20} and the reformulation as a stochastic particle system in \cite[Proposition 3.1]{FS20}. Here we consider the general system of interacting real-valued c\`adl\`ag processes $\{X_i\}_{i=1}^n$ satisfying
	\begin{equation}\label{particle_sys}
\begin{cases}
dX_i(t) =  b_{i}(t)dt  +  \sigma_{i}(t)dW_i(t) -dF\Bigl( \displaystyle \sum_{l=1}^{k} v_{l}^i \! \int_0^t g_{i}(s) d\mathcal{L}^n_{l,i}(s) \Bigr)\\
\displaystyle  \mathcal{L}^n_{l,i}(t) =\frac{1}{n} \sum_{j =1 }^{n} \mathbf{1}_{j\neq i}u_{l}^j \mathbf{1}_{t\geq\tau_j}, \quad \tau_i=\inf \{ t\geq 0 :  X_i(t) \leq 0  \},\\[4pt]
W_i(t)=\rho B_0(t) + \sqrt{1-\rho^2}B_i(t), \quad
\displaystyle X_i(0) = \varphi(u^i,v^i,\bar{u}^i,\bar{v}^i,\xi_i ),\vspace{2pt}
\end{cases}
\end{equation}
where $\{B_i\}_{i=0}^n$ is a family of independent Brownian motions, the vectors $\bar{u}^i,\bar{v}^i\in\mathbb{R}^k$ are given by
\[
\bar{u}^i_l:=\frac{1}{n}\sum_{j=1}^n \mathbf{1}_{j \neq i}u^j_l \quad\text{and}\quad \bar{v}^i_l:=\frac{1}{n}\sum_{j=1}^n \mathbf{1}_{j \neq i} v^j_l,
\]
and $\{\xi_i\}_{i=1}^n$ is a sequence of i.i.d.~random variables with common law $\mathbb{P}_\xi$.  As in the mean-field problem, we require solutions to satisfy $\mathcal{L}_{l,i}^n(0)=0$ for all $l=1,\ldots,k$ and all $i=1,\ldots, n$. The coefficients $b_i$, $\sigma_i$, and $g_i$ are taken to be continuous functions of the form
\[
b_i(t) =b(u^i,v^i,t),\quad\sigma_i(t)=\sigma(u^i,v^i,t),\quad \text{and} \quad  g_i(t)=g(u^i,v^i,\bar{u}^i,\bar{v}^i,t),
\]
with continuity in all variables. In order to have convergence of this system, as $n\rightarrow\infty$, the key requirement is that there is an underlying distribution $\varpi \in \mathcal{P}(\mathbb{R}^k\times\mathbb{R}^k)$ for the indexing vectors such that we have weak convergence
\begin{equation}\label{eq:emp_meas_network}
\frac{1}{n} \sum_{i=1}^n d(\delta_{u^i}\otimes \delta_{v^i})(u,v) \rightarrow d\varpi(u,v), \quad \text{as}\quad n\rightarrow \infty,
\end{equation}
in $\mathcal{P}(\mathbb{R}^k\times\mathbb{R}^k)$, and in turn also weak convergence
\begin{equation}\label{eq:emp_meas_network1}
\frac{1}{n} \sum_{i=1}^n d(\delta_{u^i}\otimes \delta_{v^i}\otimes \delta_{X_i(0)})(u,v,x) \rightarrow  d\nu_0 (x|u,v)d\varpi(u,v), \quad \text{as}\quad n\rightarrow \infty,
\end{equation}
in $\mathcal{P}(\mathbb{R}^k\times\mathbb{R}^k\times(0,\infty))$, where $\nu_0 (\cdot |u,v):=\mathbb{P}_\xi\circ \tilde{\varphi}^{-1}_{u,v}$ for $\tilde{\varphi}_{u,v}(x):=\varphi(u,v,\mathbb{E}[\mathbf{u}],\mathbb{E}[\mathbf{v}],x)$ with the usual notation $(\mathbf{u},\mathbf{v})\sim \varpi$. This holds, e.g., when  $\{(u^i,v^i)\}_{i=1}^\infty$ are i.i.d.~samples from a desired distribution $\varpi$, drawn independently of $\{\xi_i\}_{i=1}^\infty$, which is the setting of the financial model in \cite{FS20}.

The full set of assumptions for the particle system are collected in a single statement below. As we detail in Lemma \ref{eq:uv-compact-nonneg}, these assumptions automatically ensure that the limiting distribution $\varpi$ in \eqref{eq:emp_meas_network} satisfies Assumption \ref{uv_assump} from above.

\begin{assump}[Structural conditions for the particle system]\label{assump:paramters_emperical}
	We assume the coefficients satisfy Assumption \ref{MV_assump}, where $g$ is now of the slightly more general form $g(u,v,\bar{u},\bar{v},t)$. Additionally, we assume that $\varphi$ in the definition of $X_{i}(0)$ is a measurable function, and that $\sigma_{u^i,v^i}(\cdot) \in\mathcal{C}^{\beta}([0,T])$ for some $\beta>1/2$ with the H{\"o}lder norms being bounded uniformly in $i=1,\ldots,n$ and $n\geq 1$. Furthermore, we assume there is a constant $C>0$ such that $|u_i|+|v_i|\leq C$ for all $i=1,\ldots,n$ and $n\geq 1$, and we assume that $u^i\cdot v^j\geq 0$ for all $i,j=1,\ldots,n$ and $n\geq 1$. Finally, we ask that the weak convergence \eqref{eq:emp_meas_network}-\eqref{eq:emp_meas_network1} holds with $\nu_0$ satisfying Assumption \ref{X0_assump}.
\end{assump}

After some inspection, the particle system \eqref{particle_sys} reveals itself to be non-unique, as it is, since the dynamics may allow for different sets of absorbed particles whenever a particle reaches the origin. Thus, it becomes necessary to make a choice. In the interbank model of \cite{FS20}, it was shown that a Tarski fixed point argument gives a greatest and least c\`adl\`ag clearing capital solution to \eqref{particle_sys}, and \cite[Proposition 3.1]{FS20} established that selecting the greatest clearing capital solution from for this system amounts to letting the sets of absorbed particles be given by the discrete `cascade condition' that we now describe. 

By analogy with the total loss process \eqref{eq:total_loss}, we also introduce a total loss process
\begin{equation}
(v,t)\mapsto \mathbf{L}^n_{v}(t):= \frac{1}{n}\sum_{j=1}^n v\cdot u^j \mathbf{1}_{t\geq \tau_j} = \sum_{l=1}^k v_l
\bigl( 
\mathcal{L}_{l,i}^n(t)+u_l^i\mathbf{1}_{t\geq \tau^i}
\bigr),
\end{equation}
for the finite particle system, and, similarly to \eqref{eq:Xi_map}--\eqref{eq:Theta_map}, we then define the auxiliary maps
\begin{equation}\label{eq:maps_cascade}
\begin{cases}
\Xi^n(t,f,v):= \displaystyle \sum_{l=1}^k v_l 	\Xi^n_l(t;f), \quad 	\Xi^n_l(t,f):=  \sum_{j= 1}^{n} u_{l}^j  \mathbf{1}_{ \{ X_j(t-) \in [0, \Theta^n_j(t; f) ]  , \, t\leq \tau_j \} } ,\\
\Theta^n_j(t,f) := \displaystyle F \Bigl(  \int_0^{t-} \!g_j(s)d\mathbf{L}^n_{v^j}(s) + g_j(t)f(v^j) \Bigr)  - F \Bigl(  \int_0^{t-} \!g_j(s)d\mathbf{L}^n_{v^j}(s) \Bigr),\vspace{-2pt}
\end{cases}
\end{equation}
for $l=1,\ldots,k$ and $j=1,\ldots,n$, where we stress that, at any time $t$, the values of $\Xi^n(t,f,v)$ and $\Theta_j^n(t,f)$ are completely specified in terms of the `left-limiting state' of the particle system. That is, the maps are well-defined at time $t$ without any a priori knowledge of which particles (if any) will be absorbed at time $t$. Armed with these definitions, we declare that, at any time $t$, the set of absorbed particles (possibly the empty set) is given by
\begin{equation}\label{eq:default_set}
\mathcal{D}_t :=\{ i \; | \; \tau_i = t  \} := \bigl\{i \; | \; X_i(t-)-\Theta^n(t;\Delta \mathbf{L}_{(\cdot)}^n(t);i) \leq 0,\; \tau_i\geq t  \bigr\},
\end{equation}
with corresponding jump sizes
\[
\Delta \mathcal{L}^{n}_{l,i}(t) = \frac{1}{n} \sum_{j \in \mathcal{D}_t} \mathbf{1}_{j\neq i}u_{l}^j=\Xi^n_l(t,\Delta\mathbf{L}_{(\cdot)}^n(t))-u_l^i\mathbf{1}_{i\in \mathcal{D}_t},
\]
where the mapping $v\mapsto \Delta \mathbf{L}^n_v(t)$ in \eqref{eq:default_set} is determined by the iterated cascade condition
\begin{equation}\label{particle_cascade_cond1}
\Delta\mathbf{L}_{v}^{n}(t)={\displaystyle \lim_{m\rightarrow n}\Delta_{t,v}^{n,(m)}},\quad\text{for}\quad\begin{cases}
\Delta_{t,v}^{n,(m)}:=\Xi^{n}(t,\Delta_{t,(\cdot)}^{n,(m-1)},v), & m=1,\ldots,n,\\[4pt]
\Delta_{t,v}^{n,(0)}:=\Xi(t;0,v).
\end{cases}
\end{equation}
Together, \eqref{eq:default_set} and \eqref{particle_cascade_cond1} specify the set of absorbed particles at any given time and the corresponding jumps in the total feedback felt by the particles. Rephrasing the main conclusion of \cite[Proposition 3.1]{FS20}, the two defining properties of this cascade condition for the contagion mechanism are:~(i) the specification of the set of absorbed particles $\mathcal{D}_t$ is consistent with the dynamics \eqref{particle_sys}, and (ii) it gives the c\`adl\`ag solution with the greatest values of $(X_1(t),\ldots,X_n(t))$ in the sense that any other consistent c\`adl\`ag  specification of $\mathcal{D}_t$ would yield lower values for at least one of the particles while not increasing the values of any of them. This is what gives us the greatest clearing capital in the interbank system from \cite{FS20}.

To connect the particle system \eqref{particle_sys} with the coupled McKean--Vlasov problem \eqref{CMV}, we will work with the empirical measures
\begin{equation}\label{empirical_measures_parameters}
\mathbf{P}^n:= \frac{1}{n} \sum_{i=1}^n \delta_{u^i}\otimes \delta_{v^i} \otimes \delta_{X_i}, \quad n\geq 1,
\end{equation}
where the family $\{X_i\}_{i=1}^n$ is the unique strong solution to \eqref{particle_sys} equipped with the cascade condition \eqref{eq:default_set}-\eqref{particle_cascade_cond1}. We stress that each $X_i$ is viewed as a random variable with values in the Skorokhod path space $D_\mathbb{R}=D_\mathbb{R}[0,T]$ for a given terminal time $T>0$. When working to identify the limiting behaviour of \eqref{empirical_measures_parameters}, as $n\rightarrow \infty$, it will be helpful to have a precise concept of random Markov kernels, which we introduce next.

\begin{defn}[Random Markov kernels]\label{defn:random_markov_kernel} By a Markov kernel $P=\{P_{x}\}_{x\in \mathcal{X}}$ on $D_\mathbb{R}$, for a given Polish space $\mathcal{X}$, we understand a probability measure valued mapping $x\mapsto P_{x}\in\mathcal{P}(D_\mathbb{R})$ so that $x \mapsto P_{x}(A) $ is Borel measurable for any $A \in \mathcal{B}(D_\mathbb{R})$. We will say that $\mathbf{P}=\{\mathbf{P}_{\!x}\}_{x\in \mathcal{X}}$ is a \emph{random Markov kernel} on $D_\mathbb{R}$ if the mapping $x\mapsto \mathbf{P}_{\!x}$ assigns to each $x\in \mathcal{X}$ a \emph{random} probability measure $ \mathbf{P}_{\!x}: \Omega  \rightarrow \mathcal{P}(D_\mathbb{R})$ on a fixed Polish background space such that $(x,\omega)\mapsto \mathbf{P}_{\!x}(\omega)(A)$ is Borel measurable for all $A\in \mathcal{B}(D_\mathbb{R})$.
	\end{defn}

We can now state our main result about the limit points of the empirical measures \eqref{empirical_measures_parameters} as we send the number of particles to infinity.

\begin{thm}[Mean-field convergence]\label{thm:mf_conv} 
	Let $(\mathbf{P}^n)_{n\geq1}$ be the sequence of empirical measures defined in \eqref{empirical_measures_parameters}, for $n\geq1$, and let Assumption \ref{assump:paramters_emperical} be satisfied. Then any subsequence of $(\mathbf{P}^n)_{n\geq1}$ has a further subsequence, still indexed by $n$, such that $(\mathbf{P}^n,B_0)$ converges in law to a limit point $(\mathbf{P}^*,B_0)$. For any such limiting pair, we can identify $\mathbf{P}^*$ with a random Markov kernel $\{\mathbf{P}^*_{u,v}\}_{(u,v)\in \mathbb{R}^k\times \mathbb{R}^k}$ on $D_\mathbb{R}$ (in the sense of Definition \ref{defn:random_markov_kernel}) satisfying
	\begin{equation}\label{eq:limit_markov_kernel_law}
	\int \!\phi\, d \mathbf{P}^\star_{u,v}=  \mathbb{E}[\phi(X^\star_{u,v})\,|\,B_0,\mathbf{P}^\star],\qquad \text{for} \quad (u,v)\in \mathbb{R}^k \times \mathbb{R}^k,
	\end{equation}
	for all Borel measurable functions $\phi: D_\mathbb{R}\rightarrow \mathbb{R}$, where the processes $X^\star_{u,v}$ are c\`adl\`ag solutions to the coupled McKean--Vlasov system
		\begin{equation}\label{CMV_relaxed}
	\begin{cases}
	dX^\star_{u,v}(t) =  b_{u,v}(t)dt  + \!\sigma_{u,v}(t)dW_{u,v}(t) - d F\Bigl(  \displaystyle\sum_{l=1}^k v_l \!\int_0^t g_{u,v}(s) d\mathcal{L}^\star_l(s)   \Big), \\[4pt]
	\mathcal{L}^\star_{l}(t)  = \!\displaystyle\int_{\mathbb{R}^k\times\mathbb{R}^k}  u_l \mathbb{P}( t\geq \tau^\star_{u,v} \, | \, B_0, \mathbf{P}^\star) d\varpi(u,v), \quad l=1,\ldots,k,\\[10pt]
	\tau^\star_{u,v} =\inf \{ t > 0 : X^\star_{u,v}(t) \leq 0 \}, \quad W_{u,v}(t)=\rho B_0(t) + \sqrt{1-\rho^2}B_{u,v}(t).\vspace{2pt}
	\end{cases}
	\end{equation}
	Furthermore, the pair $(\mathbf{P}^\star,B_0)$ is independent of the particle-specific Brownian motions $B_{u,v}$ and the initial conditions $X_{u,v}(0)$.
\end{thm}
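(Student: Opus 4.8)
The plan is to follow the classical scheme for mean-field convergence of McKean--Vlasov systems with a common noise, adapting the approach of \cite{LS18a} to the present heterogeneous setting.

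\textbf{Step 1: Tightness.} First I would establish that the sequence $(\mathbf{P}^n, B_0)_{n\geq 1}$ is tight in $\mathcal{P}(\mathcal{P}(\mathbb{R}^k\times\mathbb{R}^k\times D_\mathbb{R}))\times C([0,T])$. Tightness of the $B_0$ component is trivial. For $\mathbf{P}^n$, since $\mathcal{P}(\mathbb{R}^k\times\mathbb{R}^k\times D_\mathbb{R})$ is itself Polish, it suffices (by a standard criterion, e.g.\ via Sznitman) to show tightness of the intensity measures $\mathbb{E}[\mathbf{P}^n]$ on $\mathbb{R}^k\times\mathbb{R}^k\times D_\mathbb{R}$, together with a uniform-integrability-type control. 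The marginal in $(u,v)$ is handled by Assumption \ref{assump:paramters_emperical}, which forces $\{u^i,v^i\}$ into a fixed compact set and gives the weak convergence \eqref{eq:emp_meas_network}. For the $D_\mathbb{R}$-marginal one decomposes $X_i = M_i + A_i$ where $M_i$ is the martingale (Brownian) part and $A_i$ the finite-variation part; the drift $b_i$ is bounded, $\sigma_i$ is bounded, and the feedback term $F(\sum_l v_l^i\int_0^\cdot g_i d\mathcal{L}_{l,i}^n)$ is monotone and uniformly bounded because $F$ is Lipschitz, $g_i$ is bounded, and $\mathcal{L}_{l,i}^n(T)\leq \frac{1}{n}\sum_j |u_l^j| \leq C$. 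Hence the Aldous/Kurtz criterion for tightness in $D_\mathbb{R}$ applies uniformly. This is the content of what the excerpt calls Section 3.2.

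\textbf{Step 2: Identification of limit points.} Pass to a convergent subsequence $(\mathbf{P}^n,B_0)\Rightarrow(\mathbf{P}^\star,B_0)$. The key technical point (Section 3.3) is to pass to the limit in the contagion processes $\mathcal{L}^n_{l,i}$. Using the a priori density bounds from Section 4.1 (which control the probability that a particle sits in a small neighbourhood of the origin, so that limit crossing times have no atoms) one shows $t\mapsto \mathbb{P}(t\geq \tau^\star_{u,v}\mid B_0,\mathbf{P}^\star)$ is continuous, hence the map from measures to losses is continuous along the subsequence, and the empirical losses converge to $\mathcal{L}^\star_l(t)=\int u_l\,\mathbb{P}(t\geq\tau^\star_{u,v}\mid B_0,\mathbf{P}^\star)\,d\varpi(u,v)$. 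Simultaneously, for each bounded continuous test functional $\phi$ one uses the martingale problem: the processes $\phi(X_i(t)) - \int_0^t (\text{generator terms})$ are martingales with respect to the filtration generated by $B_0$ and the particle data, and passing to the limit (exploiting independence of the $B_i$ from $B_0$ and the exchangeability structure) yields that, conditionally on $(B_0,\mathbf{P}^\star)$, the canonical process under $\mathbf{P}^\star_{u,v}$ solves the SDE in \eqref{CMV_relaxed} driven by a Brownian motion of the required correlation structure. This is where one invokes a conditional version of the martingale representation / a Girsanov-free SDE identification argument as in \cite{LS18a}; the disintegration $\mathbf{P}^\star = \int \mathbf{P}^\star_{u,v}\,d\varpi$ into a random Markov kernel follows because the $(u,v)$-marginal of $\mathbf{P}^\star$ is the deterministic $\varpi$. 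Formula \eqref{eq:limit_markov_kernel_law} is then just the statement that $\mathbf{P}^\star_{u,v}$ is a version of the conditional law of $X^\star_{u,v}$ given $(B_0,\mathbf{P}^\star)$.

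\textbf{Step 3: The independence claim.} Finally, to get the last sentence --- that $(\mathbf{P}^\star,B_0)$ is independent of the particle-specific $B_{u,v}$ and the initial data $X_{u,v}(0)$ --- I would argue as follows: in the prelimit, $\mathbf{P}^n$ is a measurable function of $B_0$, $\{B_i\}_{i\geq 1}$, $\{\xi_i\}_{i\geq 1}$ and $\{u^i,v^i\}$; its limit $\mathbf{P}^\star$ is measurable with respect to the tail $\sigma$-algebra of the i.i.d.\ (conditionally on $B_0$) particle inputs, hence by a conditional Hewitt--Savage / Kolmogorov $0$--$1$ argument it is $\sigma(B_0,\mathbf{P}^\star)$-measurable and, crucially, conditionally on $B_0$ it does not depend on any fixed finite collection of the $B_i,\xi_i$. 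Since the newly constructed $X^\star_{u,v}$ in \eqref{CMV_relaxed} are built from a fresh Brownian motion $B_{u,v}$ and an independent draw $X_{u,v}(0)\sim\nu_0(\cdot|u,v)$, which play the role of one such "fixed coordinate", the pair $(\mathbf{P}^\star,B_0)$ is independent of $(B_{u,v},X_{u,v}(0))$; this is exactly the mechanism by which the relaxed system \eqref{CMV_relaxed} has a well-posed meaning. I would make this precise by the standard device of representing $\mathbf{P}^\star$ on an enlarged space and noting the exchangeability of the inputs.

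\textbf{Main obstacle.} The delicate step is Step 2, specifically the passage to the limit in the nonlinear term $\mathbb{P}(t\geq\tau^n_{u,v})$: crossing times are not continuous functionals on $D_\mathbb{R}$ at paths that touch $0$ tangentially, and jumps in the losses are genuinely present in this model. The resolution --- as in \cite{LS18a} --- is that the a priori bound $V_0(x|u,v)\leq C_1 x^\beta$ near the origin propagates (Section 4.1) to give, for the limit, an absolutely continuous law with controlled behaviour at $0$, ruling out the pathological tangential crossings on a set of positive probability; together with the monotonicity of $F$ and of the losses this forces the limiting $\mathcal{L}^\star_l$ to be c\`adl\`ag and the identification to go through. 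Handling the conditioning on $(B_0,\mathbf{P}^\star)$ rather than on $B_0$ alone --- i.e.\ the fact that $\mathbf{P}^\star$ is itself random and must appear in the conditioning --- is the other subtlety, and is exactly why the statement is phrased via random Markov kernels and a \emph{relaxed} system \eqref{CMV_relaxed} rather than \eqref{CMV} directly.
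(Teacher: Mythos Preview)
Your outline matches the paper's broad strategy, but three of your proposed mechanisms would not work as stated. First, you invoke an Aldous/Kurtz criterion in $D_\mathbb{R}$ without specifying the topology; since the feedback term $F(\cdots)$ genuinely jumps (this is the whole point of Section \ref{sec:jump_sizes}), J1-tightness fails. The paper works in Skorokhod's M1 topology (Proposition \ref{prop:tightness}), for which your continuous-plus-monotone decomposition is indeed the right idea, but M1-tightness additionally requires control of the oscillation near $t=0$, i.e.\ that an arbitrarily small amount of mass is lost to contagion in short time, uniformly in $n$. This is Lemma \ref{lem:contagion_at_0}, which is not automatic and relies on the boundary decay $V_0(x|u,v)\leq C_1 x^\beta$; you do not mention it. Second, in Step 2 you propose to use the density bounds of Section \ref{subsect:density} to show $t\mapsto\mathbb{P}(t\geq\tau^\star_{u,v}\mid B_0,\mathbf{P}^\star)$ is continuous, but it is not: the limiting losses can jump. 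The paper never invokes Section \ref{subsect:density} for the convergence; instead it introduces the co-countable set $\mathbb{T}_\star$ of continuity times \eqref{eq:cont_times}, proves convergence of the feedback only at those times (Lemma \ref{lem:first_conv_result}, Proposition \ref{prop:conv_total_feedback}), and then passes to all of $[0,T]$ via c\`adl\`ag regularity.

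The most serious gap is your repeated appeal to exchangeability (``the exchangeability structure'' in Step 2; a ``conditional Hewitt--Savage / Kolmogorov 0--1 argument'' in Step 3). The particle system \eqref{particle_sys} is \emph{not} exchangeable: the indexing vectors $(u^i,v^i)$ may be deterministic, with only the empirical convergence \eqref{eq:emp_meas_network} assumed; the paper flags this explicitly at the start of Section \ref{Sect:conv}. Hence neither Sznitman-type reductions nor tail-$\sigma$-algebra 0--1 laws are available. The paper instead builds the random Markov kernel by disintegrating the limit measure against the fixed marginal $\varpi$ (the construction around \eqref{eq:1st_Markov_kernel}--\eqref{eq:P_uv_defn}), and obtains the independence of $(\mathbf{P}^\star,B_0)$ from $(B_{u,v},X_{u,v}(0))$ by a direct limiting computation (Proposition \ref{prop:P_independent}): a vanishing-second-moment argument shows that the canonical $B_{u,v}(\mu,w,\cdot)$ is a Brownian motion under $\mathbb{P}^{\mu,w}_{u,v}$ for $\mathbb{P}^\star_0$-a.e.\ $(\mu,w)$, independent of $Z(0)$, after which the product structure of the disintegration yields the factorisation. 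This replaces your 0--1 law and is the substantive new work relative to \cite{LS18a}.
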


The proof of Theorem \ref{thm:mf_conv} is the subject of Section \ref{Sect:conv}. Here we only briefly discuss the intuition behind the identification of $\mathbf{P}^\star$ as a random Markov kernel and what the proof of Theorem \ref{thm:mf_conv} can then be boiled down to. Firstly, note that the empirical measures $\mathbf{P}^n$ are random probability measures on $\mathbb{R}^k\times \mathbb{R}^k \times D_\mathbb{R}$, so it is natural to work with weak convergence on this space. Thus, a given  limit point $(\mathbf{P}^\star , B^0)$ yields, in the first instance, a random probability measure on $\mathbb{R}^k\times \mathbb{R}^k \times D_\mathbb{R}$. Nevertheless, due to \eqref{eq:emp_meas_network}, we can consider $\mathbf{P}^\star$ as a random variable $\mathbf{P}^\star : \Omega \rightarrow \mathcal{P}_{\!\varpi}( D_\mathbb{R})$, where the range $\mathcal{P}_{\varpi}(D_\mathbb{R})$ denotes the space of all Borel probability measures $\mu\in \mathcal{P}(\mathbb{R}_k\times \mathbb{R}_k \times D_\mathbb{R})$ with fixed marginal $\mu \circ \pi_{1,2}^{-1}=\varpi$ for $\pi_{1,2}(u,v,\mu)=(u,v)$. This space $\mathcal{P}_{\!\varpi}(D_\mathbb{R})$ can then be seen to be isomorphic to the space of all Markov kernels $\{\nu_{u,v}\}_{(u,v)\in\mathbb{R}^k \times \mathbb{R}^k}$ for $D_\mathbb{R}$ under the identification
\[
\{\nu_{u,v}\}_{(u,v)\in\mathbb{R}^k \times \mathbb{R}^k} \cong \nu(d\eta) := \int_{\mathbb{R}^k \times \mathbb{R}^k} \nu_{u,v}(d\eta)d\varpi(u,v).
\]
For any limit point of the empirical measures, we thus have a random probability measure $\mathbf{P}^\star: \Omega \rightarrow \mathcal{P}_\varpi(D_\mathbb{R})$ corresponding to a random Markov kernel for $D_\mathbb{R}$ via the identification
\[
 \{\mathbf{P}^\star_{u,v}(\omega)\}_{(u,v)\in\mathbb{R}^k \times \mathbb{R}^k} \cong \mathbf{P}^\star(\omega),
\]
understood $\omega$ by $\omega$, where the key requirement is that the mapping $(u,v,\omega)\mapsto \mathbf{P}^\star_{u,v}(\omega)(A)$ must be measurable for every $A\in \mathcal{B}(D_\mathbb{R})$.

In view of the above, checking that a limit point $\mathbf{P}^\star$ yields a solution to the desired McKean--Vlasov system \eqref{CMV_relaxed} amounts to checking that each $\mathbf{P}^\star_{u,v}$ realises the conditional law of $X_{u,v}^\star $ on the path space $D_\mathbb{R}$ given the pair $(B_0,\mathbf{P}^\star)$, which is precisely \eqref{eq:limit_markov_kernel_law}. In particular, we are looking for the relations
		\begin{align}
\int f(u,v)\phi(\eta) \,d\mathbf{P}^\star(u,v,\eta)   &= \int_{\mathbb{R}^k\times \mathbb{R}^k } f(u,v) \mathbb{E}[\phi(X^\star_{u,v})\,|\,B_0,\mathbf{P}^\star]d\varpi(u,v) \nonumber\\
&= \int_{\mathbb{R}^k\times\mathbb{R}^k} f(u,v) \int_{D_\mathbb{R}} \phi(\eta) \,d\mathbf{P}_{u,v}^\star(\eta)d\varpi(u,v)\nonumber
	\end{align}
for all Borel measurable $f:\mathbb{R}^k \times \mathbb{R}^k \rightarrow \mathbb{R}$ and $\phi: D_\mathbb{R}\rightarrow \mathbb{R}$. The proof of Theorem \ref{thm:mf_conv} is implemented in this spirit, by constructing a random Markov kernel from a given limiting pair $(\mathbf{P}^\star,B_0)$ in a way that the above relations can be conveniently verified (see, in particular, Proposition \ref{prop:P_independent}).

\begin{rem}[Relaxed solution concept] We note that the empirical measures $\mathbf{P}^n$ become independent of the idiosyncratic noise in the limit, but the theorem does not guarantee that it becomes strictly a function of the common noise, which explains the appearance of $\mathbf{P}^\star$ on the right-hand side of \eqref{eq:limit_markov_kernel_law} and \eqref{CMV_relaxed}. Nonetheless, $(\mathbf{P}^\star,B^0)$ being independent of the idiosyncratic noise means that this system is qualitatively the same as \eqref{CMV}. Analogously to \cite{LS18a}, \eqref{eq:limit_markov_kernel_law}-\eqref{CMV_relaxed} is a `relaxed' solution of the heterogeneous system \eqref{CMV}. This is similar to the weak solution concepts for mean-field games introduced in \cite{carmona2016} and later considered for general McKean--Vlasov SDEs in \cite{hammersley2021}.
\end{rem}

Whenever there is pathwise uniqueness of \eqref{CMV_relaxed}, we get a unique mean-field limit, given by a pair $(\mathbf{P}^\star,B_0)$ for which $\mathbf{P}^\star$ is in fact $B_0$-measurable. In particular, we then have full convergence of the particle system and the additional appearance of $\mathbf{P}^\star$ in the conditioning on the right-hand sides of \eqref{eq:limit_markov_kernel_law} and \eqref{CMV_relaxed} can be dropped. Since we have established results on conditions for uniqueness with continuous dynamics, we get the following result.

%%%%%%%%%%%%%%%%%ADD SOMETHING LIKE THIS IN THE FULL VERSION OF PAPER%%%%%%%%%%%%%%%%%%%%%
%\begin{corollary}
%	\notiz{Provided $u,v\in\mathbb{R}^k_+$ (as is natural in the applications discussed here), we furthermore have
%		\[
%		(\mathcal{L}_l ...)
%		\]
%		in the product M1 topology.}
%\end{corollary}
%\begin{proof}
%	.... Under this assumption, each $\mathcal{L}_l$ is increasing and so is the sum $\mathcal{L}_v=.....$. Hence we can argue precisely as in PROP/LEMMA XYZ for each $\mathcal{L}_l$ to get the result.
%\end{proof}
%%%%%%%%%%%%%%%%%%%%%%%%%%%%%%%%%%%5

\begin{thm}[Full convergence with continuous dynamics]\label{thm:yamada-watanabe}
	Under the assumptions of Theorem \ref{thm:global_unique}, there is uniqueness of the limit points in Theorem \ref{thm:mf_conv}, and therefore $(\mathbf{P}^n,B_0)$ converges in law to a unique limit $(\mathbf{P}^\star,B_0)$. Moreover, this limit is now characterised by a Brownian motion $B_0$ and a random Markov kernel $\{\mathbf{P}^\star_{u,v}\}_{(u,v)\in \mathbb{R}^k\times \mathbb{R}^k}$ satisfying
	\begin{equation}\label{eq:boldP}
	\int \! \phi \,d\mathbf{P}^\star_{u,v} = \mathbb{E}[\phi(X_{u,v})\,|\,B_0],
	\end{equation}
	for all Borel-measurable $\phi: D_\mathbb{R}\rightarrow \mathbb{R}$,
	where the system $\{X_{u,v}\}_{(u,v)\in \mathbb{R}^k\times \mathbb{R}^k}$ constitutes the unique family of continuous processes obeying the dynamics \eqref{CMV} with $B_0$ independent of each $B_{u,v}$.
\end{thm}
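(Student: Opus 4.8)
The plan is to run a Yamada--Watanabe argument, combining the weak existence of limit points from Theorem \ref{thm:mf_conv} with the pathwise uniqueness from Theorem \ref{thm:global_unique}, along the lines of \cite[Theorem 2.3]{LS18b}. The key observation is that Theorem \ref{thm:global_unique} and its proof in Section \ref{Sect:com_nois} are insensitive to the precise nature of the common noise: the smallness condition \eqref{smallness_cond} does not involve $B_0$ at all, so the argument yields pathwise uniqueness (and continuity) of solutions to \eqref{CMV} with $B_0$ replaced by \emph{any} common source of randomness $\widehat B$, provided the particle-specific Brownian motions $B_{u,v}$ remain independent of $\widehat B$. We exploit this by enlarging the common noise. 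Concretely, fix a limit point $(\mathbf{P}^\star,B_0)$ from Theorem \ref{thm:mf_conv}. Since $\mathcal{L}^\star_l$ is $\sigma(B_0,\mathbf{P}^\star)$-measurable and the $B_{u,v}$ are independent of $(B_0,\mathbf{P}^\star)$, the family $\{X^\star_{u,v}\}$ together with $\{\mathcal{L}^\star_l\}$ is a genuine continuous solution of \eqref{CMV} in which the role of the common noise is played by $\widehat B:=(B_0,\mathbf{P}^\star)$, and \eqref{eq:limit_markov_kernel_law} states precisely that $\mathbf{P}^\star_{u,v}$ is the conditional law of $X^\star_{u,v}$ given $\widehat B$.

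The next step is to duplicate this common noise. On an enlarged probability space, one constructs two random Markov kernels $\mathbf{P}^{\star,1},\mathbf{P}^{\star,2}$ that are, conditionally on $B_0$, independent and each distributed as $\mathbf{P}^\star$ (i.e.\ glue two copies of the regular conditional distribution of $\mathbf{P}^\star$ given $B_0$), together with a single family $\{B_{u,v}\}$ of Brownian motions independent of $(B_0,\mathbf{P}^{\star,1},\mathbf{P}^{\star,2})$. Using the remark following \eqref{CMV1} --- that the contagion processes are unchanged when the particle-specific drivers are replaced by any Brownian motions independent of the common noise and the initial data --- we may realise both copies with this same family $\{B_{u,v}\}$ as their idiosyncratic drivers without altering the laws involved. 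A short conditional-independence computation, using $B_{u,v}\perp(B_0,\mathbf{P}^{\star,1},\mathbf{P}^{\star,2})$ and $\mathbf{P}^{\star,1}\perp\mathbf{P}^{\star,2}\mid B_0$, then gives $\mathbb{P}(t\ge\tau^{\star,i}_{u,v}\mid B_0,\mathbf{P}^{\star,i}) = \mathbb{P}(t\ge\tau^{\star,i}_{u,v}\mid B_0,\mathbf{P}^{\star,1},\mathbf{P}^{\star,2})$ for $i=1,2$, so that, taking $\widehat B:=(B_0,\mathbf{P}^{\star,1},\mathbf{P}^{\star,2})$ as the common noise, both $\{X^{\star,1}_{u,v}\}$ and $\{X^{\star,2}_{u,v}\}$ solve \eqref{CMV} driven by the \emph{same} pair $(B_0,\{B_{u,v}\})$.

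Now pathwise uniqueness finishes the job. Applying Theorem \ref{thm:global_unique} with common noise $\widehat B$ --- legitimate since $\{B_{u,v}\}\perp\widehat B$ --- the two solutions coincide, $X^{\star,1}_{u,v}=X^{\star,2}_{u,v}$ for all $(u,v)$ almost surely, whence $\mathcal{L}^{\star,1}_l=\mathcal{L}^{\star,2}_l$ a.s. But $\mathcal{L}^{\star,1}_l$ and $\mathcal{L}^{\star,2}_l$ are, by construction, conditionally i.i.d.\ given $B_0$; the standard fact that two conditionally i.i.d.\ random variables which are a.s.\ equal must be measurable with respect to the conditioning $\sigma$-field then forces $\mathcal{L}^\star_l$ to be $\sigma(B_0)$-measurable. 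Hence $X^\star_{u,v}$ is driven only by $(B_0,B_{u,v})$ with $\sigma(B_0)$-measurable feedback, $\mathbf{P}^\star$ is $\sigma(B_0)$-measurable, and \eqref{CMV_relaxed}--\eqref{eq:limit_markov_kernel_law} collapse to \eqref{CMV} and \eqref{eq:boldP}. Finally, any two limit points then give $\sigma(B_0)$-measurable continuous solutions of \eqref{CMV}, which are identical by Theorem \ref{thm:global_unique}; this gives uniqueness of the limit point, and since every subsequence of $(\mathbf{P}^n,B_0)$ has a further subsequence converging to this common limit, the whole sequence converges in law.

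I expect the main obstacle to be the measure-theoretic bookkeeping in the duplication step: one must set up the enlarged space and an accompanying filtration so that $B_0$ and each $B_{u,v}$ remain Brownian motions with respect to a filtration that already ``sees'' $\mathbf{P}^{\star,1}$ and $\mathbf{P}^{\star,2}$, and so that the conditional-law identities characterising the relaxed solutions survive both the conditional duplication given $B_0$ and the replacement of the idiosyncratic drivers by the common family $\{B_{u,v}\}$. This requires carefully tracking which objects are causally adapted to $B_0$ and invoking disintegration on the relevant Polish spaces (of random Markov kernels, in particular); it is precisely the point where the argument of \cite[Theorem 2.3]{LS18b} must be transcribed to the present heterogeneous, Markov-kernel-valued setting.
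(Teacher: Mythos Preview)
Your proposal is correct and follows essentially the same approach as the paper. The paper's own proof is very terse: it observes that the estimates of Section~\ref{Sect:com_nois} go through verbatim when conditioning on $(\mathbf{P}^\star,B_0)$ in place of $B_0$ alone (your ``enlarging the common noise'' step), and then defers to the Yamada--Watanabe argument of \cite[Theorem~2.3]{LS18b}; you have simply spelled out the latter argument---the conditional duplication of $\mathbf{P}^\star$ given $B_0$, the coupling through a single family of idiosyncratic drivers, and the conditional-i.i.d.\ reasoning forcing $\sigma(B_0)$-measurability---in the present heterogeneous, Markov-kernel-valued setting.
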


Following the arguments in \cite[Theorem 2.3]{LS18b}, the proof of Theorem \ref{thm:yamada-watanabe} is a consequence of Theorem \ref{thm:mf_conv} together with the estimates in Section \ref{Sect:com_nois}.

%\begin{rem1}\notiz{In view of the results presented here, it would seem likely that the arguments of [DElatue, shkolni, nadtochiy] can be extended to give global  well-posedness of the model without common noise, but we do not pursue this here. A very interesting, and seemingly much more challenging, question is whether the general ideas of  [DElatue, shkolni, nadtochiy] can also be extended to the case of a common noise, by considering the problem as a version of a heat equation on a rough moving boundary which is not just moving according to the nonlinear contagion processes but also subject to Brownian noise.}
%\end{rem1}

\section{Limit points of the particle system}\label{Sect:conv}

This section is dedicated to the proof of Theorem \ref{thm:mf_conv}. Our first task is to establish a suitable tightness result for the pairs $(\mathbf{P}^n,B_0)$, which is done in Section \ref{subsect:tight}, and then we conclude in Section \ref{subsect:characterise_limit} that the resulting limit points $(\mathbf{P}^\star,B_0)$ can be characterised as solutions to \eqref{eq:limit_markov_kernel_law}-\eqref{CMV_relaxed}.

To implement these arguments, we follow the broad approach of \cite{LS18a}, which in turn builds on several antecedent ideas from \cite{DIRT15b}. Since \cite{LS18a} deals with a symmetric particle system, substantial adjustments to the arguments are needed, but the overall flavour remains that of identifying a limiting martingale problem, as is typical for convergence results of this type. More generally, we stress that, our particle system is not exchangeable and the positive feedback from defaults is of a rather singular nature, so the setting is quite different from the classical frameworks for propagation of chaos. In particular, the singular interaction leads us to work with Skorokhod's M1 topology as in \cite{DIRT15b, LS18a,NS17}. This is very different from the related work \cite{Inglis_Talay}, discussed in the next subsection, where the particles are smoothly interacting. For a careful introduction to the M1 topology on Skorokhod space, we refer to the excellent monograph \cite{whitt_2002}.

\subsection{A different approach to the heterogeneity}\label{sec:inglis-talay}

As we mentioned in the introduction, the first paper to look at heterogeneity in particle systems with a contagion mechanism similar to \eqref{particle_sys} is \cite{Inglis_Talay}, and this remains, to the best of our knowledge, the only paper to have examined the issue of mean-field convergence for such particle systems.

The analysis in \cite{Inglis_Talay}, however, differs quite substantially from ours, since the contagion mechanism is smoothed out in time, so there are no explosion times nor jumps to consider, neither in the approximating particle system nor in the mean-field limit; see the dynamics in \eqref{inglis-talay:particles} below. Also, there is no common noise to deal with, as the system is driven by fully independent Brownian motions. Unsurprisingly, however, there are certainly some similarities in the proofs of tightness and convergence, but we need a different topology in order to have tightness, and, as we turn to next, the heterogeneity in our system plays out very differently in the analysis.

Indeed, the `philosophy' of how the heterogeneity is dealt with in \cite{Inglis_Talay}, when passing to a mean-field limit, is entirely different from ours. While our aspiration is to see the heterogeneous structure reflected in the limiting problem, \cite{Inglis_Talay} is interested in justifying how a homogeneous limiting problem can also serve as a reasonable approximation to a large particle system with heterogeneous interactions. Slightly simplified and reformulated for the positive half-line, the contagious particle system in \cite{Inglis_Talay} takes the form
\begin{equation}\label{inglis-talay:particles}
dX_i(t)=b(X_i(t))dt+\sigma(X_i(t))dB_i(t) - d\!\textstyle\int_0^t \varrho(t-s) L^n_i(s)ds, \quad L_i^n(t):=\frac{1}{S_i^n}\sum_{j=1}^n  J_{ij} \mathbf{1}_{t\geq \tau_j},
\end{equation}
with $\tau_i=\inf\{ t>0 : X_i(t) \leq0  \}$ and $S_i^n:=\sum_{j=1}^n  J_{ij}$, for $i=1,\ldots,n$. Clearly, the asymmetry of the so-named synaptic weights $J_{ij}$ means that each particle in \eqref{inglis-talay:particles} can feel the contagion in a very different way. However, following on from the above, \cite{Inglis_Talay} is interested in connecting the limiting behaviour of this heterogeneous system to a single homogeneous McKean--Vlasov problem
\begin{equation}\label{inglis-talay:mckean}
dX(t)=b(X(t))dt+\sigma(X(t))dB_i(t) - d\!\textstyle\int_0^t \varrho(t-s) L(s)ds, \quad L(t):=\mathbb{P}({t\geq \tau}),
\end{equation}
where, as usual, $\tau=\inf\{ t>0 : X(t) \leq0  \}$. After some results on the well-posedness of \eqref{inglis-talay:mckean}, \cite{Inglis_Talay} attacks the aforementioned idea by tracking the  particle system \eqref{inglis-talay:particles} through the particular families of weighted empirical measures
\[
P_i^n:=\frac{1}{S_i^n}\sum_{j=1}^n  J_{ij} \delta_{X_i},\quad  n\geq1, \quad \text{for each} \quad i=1,\ldots,n.
\]
The main result then says that, provided we have
\begin{equation*}
\frac{1}{(S_i^n)^2} \sum_{j=1}^n (J_{ij})^2 \rightarrow 0,\quad \text{as} \quad n\rightarrow \infty,\quad \text{for all}\quad i=1,\ldots,n,
\end{equation*}
every sequence $(P_i^n)_{n\geq 1}$ converges weakly to the law of the unique solution to \eqref{inglis-talay:mckean}, independently of the index $i$ and independently of any other properties of the heterogeneity.

As a caveat to this result, however, we wish to flag up what seems to be a problem with the proof of \cite[Theorem 2.4]{Inglis_Talay} underlying the above convergence. Specifically, the first objective is to verify that, for any given $i$, the weighted empirical measures $P_i^n$ converge to some $P$ as $n\rightarrow \infty$, where $P$ solves a suitable nonlinear martingale problem associated with the desired McKean--Vlasov limit independently of $i$. On close inspection, a crucial step in \cite[Section 5.3]{Inglis_Talay}, towards affirming the aforementioned result, relies on having
\[
\phi(X_i(t))-\phi(X_i(s))-\int_s^t \mathfrak{L}_{(r,P_j^n)}\phi(X_i(r))dr = \int_s^t\sigma(X_i(r))\phi^\prime(X_i(r))dB_i(r),
\]
for $i\neq j$, for a suitable time- and measure-dependent differential operator $\mathfrak{L}$, yielding the martingale problem. However, since $\mathfrak{L}_{(\cdot,P_i^n)}$ can be seen to give the generator of the $i$'th particle, and since $P_i^n$ can differ significantly from $P_j^n$ depending on the structure of the synaptic weights $(J_{ij})_{i,j\leq n}$, the above equality would not appear to hold in general. It is not clear that this discrepancy can be fixed given just the assumptions mentioned above, but one should certainly be able to handle it by placing additional constraints on the $J_{ij}$'s, so that an error term can be singled out which becomes negligible in the limit. Alternatively, one could apply the methodology of the present paper in order to arrive at a heterogeneous mean-field limit, using a kernel structure  $J_{ij}=\kappa(u^j,v^i)$ to capture the asymmetry of the synaptic weights.

\subsection{Tightness for the finite particle systems}\label{subsect:tight}

We wish to work with convergence in the Skorokhod space on compact time intervals $[0,T]$. This involves pointwise convergence at the initial time $t=0$ with the limiting system having zero loss at this time, as required by our notion of solution. To ensure this, we establish uniform control over the smallness of the total feedback near the initial time, which is the subject of the next lemma. Concerning the final time, we only obtain convergence for continuity points of the limiting system. To establish tightness without an analogue of the next lemma at the end-point, we will extend the particle system continuously beyond its final time $T$ as in \cite{DIRT15a}. If one is not interested in ensuring initial regularity, one could also consider continuously embedding the particle system in the larger time interval $[-1,T]$, as in the recent paper \cite{Cuchiero20}, allowing for more general initial conditions.

\begin{lem}[Small-time control on the feedback]
	\label{lem:contagion_at_0} 
	Let Assumption \ref{assump:paramters_emperical} be satisfied, and define
	\begin{equation}\label{eq:F_i}
	F^n_{i}(t) := F\Bigl( \sum_{l=1}^{k} v_{l}^i \! \int_0^t g_{i}(s) d\mathcal{L}^n_{l,i}(s) \Bigr), \quad i=1,\ldots,n,\;n\geq1,
	\end{equation}
	for all $t\geq0$. For any $\epsilon>0$ and $\delta>0$ there is a small enough $t=t(\epsilon,\delta)>0$ such that
	\begin{equation}\label{eq:control_loss}
	\limsup_{n\rightarrow\infty} \max_{i\leq n}\mathbb{P}\bigl(F^n_{i}(t)\geq\delta \bigr) <\epsilon.
	\end{equation}
\end{lem}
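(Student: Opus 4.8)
The plan is to control the accumulated contagion $F^n_i(t)$ uniformly in $i$ and $n$ for small $t$ by bounding it in terms of the total loss processes $\mathbf{L}^n_{v^i}(t) = \sum_{l} v_l^i \mathcal{L}^n_{l,i}(t)$ plus the $i$-th particle's own default contribution, and then showing that this total loss can be made uniformly small in probability by choosing $t$ small. Since $F$ is Lipschitz with $F(0)=0$ and $g_i$ is bounded (by continuity on the compact support of the indexing vectors, using Assumption \ref{assump:paramters_emperical}), we have $F^n_i(t) \leq \Vert F\Vert_{\mathrm{Lip}}\, \Vert g\Vert_\infty \sum_l v_l^i \mathcal{L}^n_{l,i}(t)$, and by the $|u_i|+|v_i|\leq C$ bound this is dominated by a constant times $\frac{1}{n}\sum_{j=1}^n \mathbf{1}_{t\geq \tau_j}$ — that is, by (a constant multiple of) the empirical fraction of particles that have defaulted by time $t$. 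So the real task reduces to showing: for any $\epsilon,\delta>0$ there is $t>0$ small with $\limsup_n \mathbb{P}\bigl(\frac{1}{n}\sum_j \mathbf{1}_{t\geq\tau_j} \geq \delta\bigr) < \epsilon$, uniformly enough to also kill the single extra term $u_l^i \mathbf{1}_{t\geq\tau_i}$ (which is $O(1/1)$ in the worst case but enters $F^n_i$ only through $v^i\cdot u^i \mathbf{1}_{t\geq\tau_i}$, bounded by a constant, so we actually need the ''mass'' interpretation and must be a little careful — in fact $F^n_i$ as defined uses $\mathcal{L}^n_{l,i}$ which excludes particle $i$, so only the empirical average of the \emph{other} particles appears, and the stray term is absent; good).

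The key step is therefore a small-time estimate on $\P(\tau_j \leq t)$ that is uniform in $j$ and $n$. Here I would use a comparison/domination argument: before the first default time, each $X_j$ satisfies $dX_j = b_j\,dt + \sigma_j\,dW_j$ with no feedback, and after defaults start the feedback term only pushes $X_j$ \emph{downward}, so $X_j(t) \leq \tilde X_j(t) := X_j(0) + \int_0^t b_j\,ds + \int_0^t \sigma_j\,dW_j$ is false in the wrong direction — rather, I should bound the hitting probability from above by noting $X_j(t) = X_j(0) + \int_0^t b_j ds + \int_0^t \sigma_j dW_j - F^n_j(t)$ with $F^n_j \geq 0$, so $\{\tau_j \leq t\} \subseteq \{\inf_{s\leq t}(X_j(0) + \int_0^s b_j\,dr + \int_0^s \sigma_j\,dW_r - F^n_j(s)) \leq 0\}$, and since $F^n_j$ is itself controlled by the global loss, one sets up a self-consistent (Gronwall-type) bound. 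Concretely: let $p_n(t) := \sup_{j\leq n}\P(\tau_j \leq t)$ — no wait, defaults are not independent, so I should instead bound $\mathbb{E}[\frac{1}{n}\sum_j \mathbf{1}_{\tau_j\leq t}] = \frac1n\sum_j \P(\tau_j\leq t)$ and use Markov's inequality. Bounding each $\P(\tau_j \leq t)$: using $F^n_j(t) \leq c\cdot \frac1n\sum_{i} \mathbf{1}_{\tau_i\leq t}$ and taking expectations yields $\frac1n\sum_j\P(\tau_j\leq t) \leq \frac1n\sum_j \P\bigl(\inf_{s\leq t}(\text{Gaussian-ish process}) \leq c\,\bar m_n(t)\bigr)$ where $\bar m_n(t) = \mathbb{E}[\frac1n\sum_i\mathbf{1}_{\tau_i\leq t}]$; using the initial-density bound $V_0(x|u,v)\leq C_1 x^\beta$ near $0$ plus $\Vert V_0\Vert \leq C_2$ from Assumption \ref{X0_assump} (or rather its particle-level analogue ensured by Assumption \ref{assump:paramters_emperical}), together with a Gaussian small-ball/reflection estimate and the non-degeneracy $c\leq\sigma\leq C$, one gets $\P(\inf_{s\leq t}(\cdots)\leq \eta) \leq C_4(\sqrt t + \eta)^{1+\beta}/\text{something}$ — in any case a quantity $\omega(t,\eta)$ with $\omega(t,\eta)\to 0$ as $t,\eta\to 0$ and, crucially, with $\omega(t,\eta)\leq \frac12 \eta/c$ for $\eta,t$ small. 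This gives $\bar m_n(t) \leq \omega(t, c\,\bar m_n(t))$, and the sublinearity of $\omega$ in its second argument forces $\bar m_n(t) \leq \rho(t)$ for some $\rho(t)\to 0$ as $t\to0$, uniformly in $n$. Then Markov's inequality finishes it.

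The main obstacle I expect is making the self-consistent hitting-probability bound rigorous without exchangeability: the feedback term $F^n_j(t)$ depends on \emph{all} the other particles' default times, so one cannot treat the $\tau_j$ as independent, and the naive ''$\P(\tau_j\leq t) \leq \omega(t, c\bar m_n(t))$'' step hides a subtle point — the event $\{\tau_j\leq t\}$ and the random quantity $\bar m_n(t)$ are correlated. The clean way around this is to keep the feedback as an upper bound by its \emph{worst-case} value (replace $F^n_j(s)$ by $c \cdot \frac1n \sum_i \mathbf{1}_{\tau_i \leq t}$, a single $\mathcal{F}_t$-measurable random variable $M_n$) and then condition on $M_n$, or better, to run the argument at the level of \emph{deterministic} bounds: show first a crude almost-sure bound $F^n_j(t) \leq c$ always, giving $\P(\tau_j\leq t)\leq \omega_0(t)$ with $\omega_0(t)\to0$, hence $\bar m_n(t)\leq \omega_0(t)$; feed this back to get $F^n_j(t)$ small with high probability, and iterate once more to reach the claimed smallness — a two-step bootstrap rather than a full fixed point. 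I would also need to confirm that the density bound $V_0(x|u,v)\leq C_1x^\beta$ genuinely transfers to the particle system under Assumption \ref{assump:paramters_emperical}; this is where the technical care lies, but it is exactly the content of the standing assumptions, so no new idea is required there. The Gaussian small-ball estimate (probability that a Brownian motion with bounded drift and diffusivity in $[c,C]$, started from a density vanishing like $x^\beta$ at $0$, dips below level $\eta$ within time $t$, being $O((\sqrt t+\eta)^{1+\beta})$ up to constants) is standard and I would cite or reprove it in a line.
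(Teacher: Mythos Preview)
Your reduction to the empirical default fraction is correct and matches the paper's starting point: $F^n_i(t)\le \frac{M}{n}\sum_{j}\mathbf{1}_{t\ge\tau_j}$ for a uniform constant $M$. The problem is that neither of your two proposed ways of closing the argument actually works.

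First, the two-step bootstrap. You write ``show first a crude almost-sure bound $F^n_j(t)\le c$ always, giving $\P(\tau_j\le t)\le\omega_0(t)$ with $\omega_0(t)\to0$''. This is false: with $F^n_j\le c$ you only get
\[
\P(\tau_j\le t)\;\le\;\P\Bigl(\inf_{s\le t}\bigl[X_j(0)+\textstyle\int_0^s b_j\,dr+\int_0^s\sigma_j\,dW_j\bigr]\le c\Bigr)\;\xrightarrow[t\downarrow0]{}\;\P\bigl(X_j(0)\le c\bigr),
\]
and the right-hand side can be close to $1$ (the worst-case contagion constant $c$ has nothing to do with $\delta$ or the initial boundary decay). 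So the bootstrap never starts.

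Second, the self-consistent inequality $\bar m_n(t)\le\omega(t,c\,\bar m_n(t))$ cannot be derived, because the pathwise bound is $F^n_j(t)\le c\,M_n(t)$ with $M_n(t)=\tfrac1n\sum_i\mathbf{1}_{\tau_i\le t}$ \emph{random}, and $M_n(t)$ is strongly positively correlated with each $\{\tau_j\le t\}$. Conditioning on $M_n(t)$ does not decouple the $\tau_j$'s (they are not conditionally independent given $M_n$), so you cannot replace the random $M_n$ by its mean in the hitting estimate. This is precisely the circularity you flagged, and the bootstrap you offer to resolve it fails as above.

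The paper's proof avoids expectations altogether and works \emph{pathwise on the event} $\{F^n_i(t)\ge\delta\}$. The key device you are missing is a stopping time that caps the contagion at the \emph{target level}, not at the worst-case constant: set $\varsigma_i:=\inf\{s>0:F^n_i(s)>\delta/2\}$. One first peels off the event that more than $\lfloor n\delta/4M\rfloor$ particles start in $(0,\delta]$; this is negligible as $n\to\infty$ by the weak convergence \eqref{eq:emp_meas_network1} and the boundary decay $V_0(x|u,v)\le C_1x^\beta$. On the complement, a short combinatorial argument (using only the jump bound $\Delta F^n_i(s)\le\tfrac{M}{n}|\mathcal{D}_s|$) shows that $\{F^n_i(t)\ge\delta\}$ forces at least $\lceil n\delta/8M\rceil$ of the particles starting above $\delta$ to have their \emph{continuous} part drop by a fixed amount (of order $\delta$) before time $\varsigma_i\wedge t$. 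Splitting the continuous part into the idiosyncratic piece $\int\sigma_j\sqrt{1-\rho^2}\,dB_j$ and the common piece $\int\sigma_j\rho\,dB_0$, the first is handled by the law of large numbers for the \emph{independent} $B_j$'s (a macroscopic fraction having a fixed-size drop in time $t$ is exponentially unlikely for small $t$), and the second by a single small-ball estimate for $B_0$. This is where the independence enters cleanly; Markov's inequality on $\bar m_n$ never appears.

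In short: your reduction is right, but to close it you must (i) cap the feedback at level $\delta/2$ via a stopping time rather than at the global constant $c$, and (ii) turn the event $\{F^n_i(t)\ge\delta\}$ into a statement about a macroscopic number of \emph{independent} Brownian excursions, then invoke the LLN, not a fixed-point in expectation.
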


\begin{proof} 
Let $\epsilon,\delta>0$ be given. Define the positive constants
\[
M_i:= g_i(0)\Vert F \Vert_{\text{Lip}} \max\{u^i\cdot v :  v\in S(\mathbf{v}) \},
\]
for $i\leq n$. Then $F_i^n$, as defined in \eqref{eq:F_i}, satisfies $F^n_i(s)-F^n_i(s-)\leq M_i|\mathcal{D}_s|$ at any time $s\geq 0$, where $|\mathcal{D}_s|$  is the number of particles absorbed at time $s$, given by the cascade condition \eqref{eq:default_set}-\eqref{particle_cascade_cond1}. By Assumption \ref{assump:paramters_emperical}, the set $S(\mathbf{v})$ is compact and the $u^i$'s likewise belong to the compact set $S(\mathbf{u})$, so we can take $M>0$ large enough such that $M_i\leq M$ uniformly in $i\leq n$ and $n\geq 1$. This yields an upper bound on the jumps of each particle, namely
\begin{equation}\label{eq:jump}
0 \leq -( X_i(s)-X_i(s-)) =  F^n_i(s)-F^n_i(s-)\leq \frac{M}{n} |\mathcal{D}_t|.
\end{equation}
for any $s > 0$. Next, we consider the number of particles starting within a distance of $\delta$ from the origin, as given by
\[
 N^n_{0,\delta}:=\sum_{j=1}^n \mathbf{1}_{\{X_j(0)\in(0,\delta]\}},
\]
and we then split the probability of interest \eqref{eq:control_loss} into
\begin{equation}\label{eq:F_bound_k}
\mathbb{P}\bigl( F_{i}(t)\geq \delta \bigl)\;   =  \mathbb{P}(F_{i}(t)\geq \delta, N^n_{0,\delta}\leq \lfloor n\delta/4M\rfloor ) + \mathbb{P}(N^n_{0,\delta} > \lfloor n\delta/4M\rfloor ).
\end{equation}
By the weak convergence \eqref{eq:emp_meas_network1}, as enforced by
Assumption \ref{assump:paramters_emperical}, we get
\begin{equation}\label{eq:initial_bound}
\limsup_{n\rightarrow\infty} \mathbb{P}(N^n_{0,\delta} > \lfloor n\delta/4M\rfloor )= 0
\end{equation}
provided
\[
\int_{\mathbb{R}^k\times\mathbb{R}^k}\int_0^\delta V_0(x|u,x)dxd\varpi(u,v) <\delta/4M.
\]
Using the bound on $V_0(\cdot|u,v)$ from Assumption \ref{X0_assump}, we see that the left-hand side of the above inequality is of order $O(\delta^{1+\beta})$ as $\delta \downarrow 0$, for some $\beta>0$. However, decreasing $\delta>0$ only increases the probability \eqref{eq:control_loss}. Therefore, without loss of generality, we can assume that 
 \eqref{eq:initial_bound} is satisfied for our fixed $\delta>0$.

Now let $\varsigma_i=\inf\{s>0 : F^n_i(s)>\delta/2 \}$. On each of the events $\{ F_{i}(t)\geq \delta ,\,N^n_{0,\delta}\leq \lfloor n\delta/4M\rfloor \}$, for any $i\leq n$, we claim that there must be at least $\lceil   n\delta /8M \rceil $ of the more than $n-\lfloor n\delta/4M\rfloor $ particles $X_j$ with initial positions $ X_j(0)> \delta$ which also satisfy
\begin{equation}\label{eq:absorb}
\inf_{s<\varsigma_i\land t}X_j(s) - X_j(0) \leq 5\delta/8.
\end{equation}
Indeed, if there were strictly less than $\lceil   n\delta /8M \rceil$ such particles (on any of the given events), then there could be at most a total of
\[
(\lceil   n\delta /8M \rceil-1)+k \leq  \lfloor   n\delta /8M \rfloor+ \lfloor   n\delta /4M \rfloor \leq 3n\delta /8M
\]
particles $X_j$ with $\tau_j<\varsigma_i\land t$ or $X^j(\varsigma_i\land t-)=0$ (on that event), which by \eqref{eq:jump} can only cause a downward jump of all other particles by at most $3\delta/8$. But this would be insufficient  for any particles $X_j$ with  $X_j(0)> \delta$ that does not satisfy \eqref{eq:absorb}  to be absorbed before or at time $\varsigma_i\land t$. Thus, again by \eqref{eq:jump},  we would end up with $F_i^n(\varsigma_i\land t)\leq 
 3\delta /8\leq \delta/2$, and hence also $F_i^n( t) \leq 3\delta /8$, which contradicts $F_i^n( t) \geq \delta$. It follows that, for each $i\leq n$, we must in particular have
\begin{equation}\label{eq:1st_bound}
\mathbb{P}\bigl(F^n_{i}(t)\geq \delta, N^{n}_{0,\delta}\leq \lfloor n\delta/4M\rfloor \bigr) \leq \mathbb{P}\bigl(  \hat{N}_{t,\delta}^{i,n} \geq \lceil   n\delta /8M \rceil   \bigr),
\end{equation}
where
\[
\hat{N}_{t,\delta}^{i,n}:=\sum_{j=1}^n \mathbf{1}_{\{ \inf_{s<\varsigma_i\land t}X_j(s) - X_j(0) \leq 5\delta/8  \}}.
\]

By definition of each $\varsigma_i$ and Assumption \ref{assump:paramters_emperical} there is a uniform $c>0$ such that \eqref{eq:absorb} implies
\[
\inf_{s\leq t}Y^i(s) -ct  -\sup_{s\leq t}|Z^i(s)|  - 3\delta/8 \leq \inf_{s<\varsigma_i\land t} X_i(s) - X_i(0) \leq -\delta/2,
\]
and hence
\[
 \inf_{s\leq t}Y^i(s) \leq  ct  + \sup_{s\leq t}|Z^i(s)| -\delta/8,
\]
for all $i\leq n$, where
$Y_i(s) :=\int_0^s\sigma_{i}(r) \sqrt{1-\rho^2}dB_i(r)$ and $Z_i(s) :=\int_0^s\sigma_{i}(t) \rho dB_0(r)$. Taking $t<\delta 2^{-5}$, and defining
\[
\tilde{N}^n_{t,\delta}:=\sum_{j=1}^n\mathbf\mathbf{1}_{\{\inf_{s\leq t}Y_j(s) \leq -\delta 2^{-4} \}},
\]
we thus have
\begin{align*}
 \mathbb{P}\bigl(  \hat{N}_{t,\delta}^{i,n} \geq \lceil   n\delta /8M \rceil   \bigr) &\leq \mathbb{P}\bigl( \tilde{N}^n_{t,\delta} \geq \lceil   n\delta /8M \rceil \bigr)+ \mathbb{P}\bigl(\sup_{s\leq t}|Z_i(s)|\geq \delta 2^{-5}\bigr) 
\end{align*}
Using the weak convergence \eqref{eq:emp_meas_network} from Assumption \ref{assump:paramters_emperical} and the strong law of large numbers for the independent Brownian motions $B^i$, we can deduce that
\[
\limsup_{n\rightarrow\infty}\mathbb{P}\bigl( n^{-1}\tilde{N}^n_{t,\delta} \geq   \delta /8M  \bigr) = 0
\]
provided
\[
\int_{\mathbb{R}^k\times\mathbb{R}^k}\mathbb{P}\Bigl(  \inf_{s\leq t} \int_0^s \sigma_{u,v}(r)dB(r) \leq \delta 2^{-4} \Bigr)d\varpi(u,v) < \delta/8M,
\]
where $B$ is a standard Brownian motion. As $\sigma$ is bounded uniformly in $(u,v,r)$ this can certainly be achieved by taking $t$ small enough relative to $\delta$. Moreover, taking $t$ small enough relative to $\delta$ also ensures that we can make
\[
\limsup_{n\rightarrow\infty}\max_{i\leq n} \mathbb{P}\bigl(\sup_{s\leq t}|Z_i(s)|\geq \delta 2^{-5}\bigr)
\]
as small as we like. Consequently, recalling \eqref{eq:F_bound_k}--\eqref{eq:initial_bound}, we can indeed find a small enough $t=t(\delta,\epsilon)>0$ such that
	\[
\limsup_{n\rightarrow\infty} \max_{i\leq n}\mathbb{P}\bigl(F^n_{i}(t)\geq\delta \bigr) <\epsilon,
\]
which completes the proof.
\end{proof}

As already mentioned in \eqref{empirical_measures_parameters}, given a family of solutions to the particle system \eqref{particle_sys}, for all $n\geq 1$, on an arbitrary time interval $[0,T]$, we define the empirical measures
\begin{equation}\label{eq:emprical_measures}
\mathbf{P}^n:= \frac{1}{n} \sum_{i=1}^n \delta_{u^i} \otimes \delta_{v^i} \otimes \delta_{X_i}, \qquad \text{for} \quad n \geq 1,
\end{equation}
where each $X_i$ is a random variable with values in $D_\mathbb{R}=D_\mathbb{R}[0,T]$ for the given $T>0$. Note that each $\mathbf{P}^n$ is then a random probability measure with values in $\mathcal{P}(\mathbb{R}^k\times\mathbb{R}^k\times D_\mathbb{R})$.

Our next result shows that these empirical measures are tight in a suitable sense. The previous lemma serves as a crucial ingredient in the proof.

\begin{prop}[Tightness of the empirical measures]\label{prop:tightness}
Let $T>0$ be given, and consider the solutions $\{(X_i(t))_{t\in[0,T]}\}_{i=1}^n$ of \eqref{particle_sys}, for all $n\geq 1$, where we assume Assumption \ref{assump:paramters_emperical} is satisfied. For an arbitrary $S>T$, we extend the paths of $X_i$ from $[0,T]$ to $[0,S]$, by setting $X_i(t):=X_i(T)$ for all $t\in[T,S]$. Let $\mathbf{P}^n$ be the empirical measures defined as in \eqref{eq:emprical_measures} but with each $X_i$ taking values in $D_\mathbb{R}=D_\mathbb{R}[0,S]$, where we endow $D_\mathbb{R}$ with Skorokhod's M1 topology (see e.g.~\cite{AT89,whitt_2002}). Moreover, let $\mathcal{P}(\mathbb{R}^k\times \mathbb{R}^k\times D_\mathbb{R})$ be endowed with the topology of weak convergence of measures, as induced by the M1 topology on $D_\mathbb{R}$. Then the empirical measures $\mathbf{P}^n$, for $n\geq1$, form a tight sequence of random variables with values in $\mathcal{P}(\mathbb{R}^k\times\mathbb{R}^k\times D_\mathbb{R})$.
\end{prop}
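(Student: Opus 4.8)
The plan is to establish tightness of $(\mathbf{P}^n)_{n\geq 1}$ as random probability measures on $\mathbb{R}^k\times\mathbb{R}^k\times D_\mathbb{R}$ by the standard two-step route: first reduce to tightness at the level of the underlying empirical measures on path space via a compact containment in the $(u,v)$-variables, and then verify a Jakubowski-type tightness criterion for the M1 topology on $D_\mathbb{R}$, using Lemma \ref{lem:contagion_at_0} to control the only genuinely singular contribution (the jumps arising from the contagion term $F^n_i$).

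\textbf{Step 1: reduction and compact containment.} Since $S(\mathbf{u},\mathbf{v})$ is compact (Assumption \ref{uv_assump}, as guaranteed by Assumption \ref{assump:paramters_emperical} via Lemma \ref{eq:uv-compact-nonneg}), and since \eqref{eq:emp_meas_network} forces the $(u,v)$-marginals of $\mathbf{P}^n$ to converge to $\varpi$, the first two coordinates live in a fixed compact set with high probability uniformly in $n$. So it suffices to show that the path-space marginals are tight. Because $\mathbf{P}^n$ is a \emph{random} measure, I would invoke the criterion (see e.g.~\cite{DIRT15b, LS18a}) that a sequence of random probability measures on a Polish space $E$ is tight (as random variables in $\mathcal{P}(E)$) provided the deterministic mean measures $\mathbb{E}[\mathbf{P}^n]$ are tight on $E$; here $E=\mathbb{R}^k\times\mathbb{R}^k\times D_\mathbb{R}$ with $D_\mathbb{R}$ under M1. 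Thus the task becomes: show $\mathbb{E}[\mathbf{P}^n]$, i.e.~$\frac1n\sum_i\mathrm{Law}(u^i,v^i,X_i)$, is tight, which by compactness in $(u,v)$ reduces to the tightness of $\frac1n\sum_i\mathrm{Law}(X_i)$ on $(D_\mathbb{R},M1)$.

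\textbf{Step 2: M1 tightness on path space.} For the M1 topology I would use the criterion (Theorem 12.12.3 in \cite{whitt_2002}) requiring: (a) control of the supremum, $\lim_{K\to\infty}\limsup_n \frac1n\sum_i\mathbb{P}(\|X_i\|_{[0,S]}>K)=0$; and (b) control of the M1 oscillation modulus $w'(X_i,\theta)$, namely $\lim_{\theta\downarrow 0}\limsup_n\frac1n\sum_i\mathbb{P}(w'(X_i,\theta)\geq\eta)=0$ for each $\eta>0$, plus the (automatic, since paths start at $X_i(0)$ with bounded first moment) control at the endpoints. For (a), decompose $X_i(t)=X_i(0)+\int_0^t b_i+\int_0^t\sigma_i dW_i - F^n_i(t)$; the drift and diffusion terms are controlled uniformly by Assumption \ref{assump:paramters_emperical} (bounded coefficients, Doob/BDG for the martingale part), $X_i(0)$ has uniformly bounded first moment by Assumption \ref{X0_assump}, and $0\le F^n_i(t)\le \|F\|_{\mathrm{Lip}}\, g_i(0)\,\max\{u^i\cdot v: v\in S(\mathbf v)\}\cdot \frac1n\sum_j u^j\cdot(\text{stuff})$ is bounded by a deterministic constant since it is dominated by $F$ evaluated at a total-loss quantity $\le \|F\|_{\mathrm{Lip}}g_i(0)\,\mathbf{L}^n_{v^i}(S)$ with $\mathbf{L}^n$ uniformly bounded. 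For (b), the key point is that $X_i$ is the sum of a continuous semimartingale part (whose ordinary modulus of continuity is controlled, killing its M1 oscillation) and the pure-jump nondecreasing-in-effect process $-F^n_i$; since $F^n_i$ is \emph{monotone non-decreasing}, its graph is monotone, so $w'(-F^n_i,\theta)$ is controlled purely by the total increment $F^n_i(t+\theta)-F^n_i(t)$ over short windows — this is exactly where Lemma \ref{lem:contagion_at_0} enters, giving $\limsup_n\max_i\mathbb{P}(F^n_i(t)\ge\delta)<\epsilon$ for small $t$, and by a stationarity/shift argument (applying the lemma to the process restarted at any time, or covering $[0,S]$ by finitely many short intervals) one upgrades this to control of all short-window increments simultaneously. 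Combining the continuous part's modulus with the monotone jump part's small-increment control yields the M1 oscillation bound.

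\textbf{Main obstacle.} The delicate point is Step 2(b): making rigorous that M1 oscillations of $X_i$ are controlled by (continuous modulus of semimartingale part) $+$ (total variation of the monotone jump part over short windows). One must use the specific structure of M1 — that a monotone function has \emph{zero} M1 oscillation along its own graph — together with the fact that $-F^n_i$ is monotone, and then handle the interplay with the continuous part, where $w'$ of a sum is not simply the sum of the $w'$'s. The cleanest route is to bound $w'(X_i,\theta)$ by the uniform modulus $\bar w(Y_i+Z_i^{\mathrm{drift}},\theta)$ of the continuous part plus $\sup_{|t-s|\le\theta}|F^n_i(t)-F^n_i(s)| = \sup_{|t-s|\le\theta}(F^n_i(t)-F^n_i(s))$, and then cover $[0,S]$ by $O(1/t_0)$ intervals of length $t_0=t_0(\delta,\epsilon)$ from Lemma \ref{lem:contagion_at_0}, taking a union bound over them and over $i$ (the latter via the $\frac1n\sum_i$ average, not a max, so we only need the per-particle probabilities to be summable-in-average, which the lemma provides after the weak-convergence reductions). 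A secondary technical point is justifying the random-measure tightness criterion and the $(u,v)$-compactness reduction carefully, but these are routine once the path-space tightness is in hand.
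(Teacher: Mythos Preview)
Your overall architecture is right, but you have misplaced the role of Lemma \ref{lem:contagion_at_0}, and this creates a real gap in Step 2(b).

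First, the ``stationarity/shift'' argument you propose for upgrading Lemma \ref{lem:contagion_at_0} to control of $F^n_i(s+\theta)-F^n_i(s)$ at arbitrary $s$ does not go through: the proof of that lemma hinges on the boundary behaviour $V_0(x|u,v)\leq C_1 x^\beta$ of the \emph{initial} densities, and there is no stationarity here. Restarting at a later time would require analogous boundary control on the time-$s$ sub-densities of the finite particle system, which you do not have. So, as written, your interior-oscillation argument is incomplete.

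Second, and more importantly, you do not need any such increment control for the interior M1 oscillation. The paper's trick is cleaner than the decomposition you sketch: writing $Y_i:=X_i+F^n_i$ (which is continuous with bounded drift and diffusion), monotonicity of $F^n_i$ alone yields
\[
M_{X_i}(t_1,t,t_2)\leq |Y_i(t_1)-Y_i(t)|+|Y_i(t_2)-Y_i(t)|,
\]
because any overshoot of $X_i(t)$ above $X_i(t_1)\lor X_i(t_2)$ must come from the $Y$-part on $[t_1,t]$ (since $-F^n_i$ only decreases there), and any undershoot below $X_i(t_1)\land X_i(t_2)$ is bounded by the $Y$-increment on $[t,t_2]$. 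A BDG estimate on $Y_i$ then gives $\mathbb{P}(M_{X_i}(t_1,t,t_2)\geq\varepsilon)\leq C\varepsilon^{-2}|t_2-t_1|^2$ uniformly in $i,n$, and the Avram--Taqqu criterion \cite[Theorem 1]{AT89} converts this three-point bound directly into $\max_i\mathbb{P}(w_{\delta,\mathrm{M1}}(X_i)\geq\varepsilon)\leq C\varepsilon^{-2}\delta$. No covering argument, no $F$-increments.

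Third, the endpoint control at $t=0$ is \emph{not} automatic, contrary to your parenthetical. The M1 tightness criterion requires $\sup_{s,t\in[0,\delta]}|X_i(s)-X_i(t)|$ to be small in probability, and since $|X_i(t)-X_i(0)|\leq |Y_i(t)-Y_i(0)|+F^n_i(t)$ with $F^n_i(\delta)$ potentially of order one, this is precisely where Lemma \ref{lem:contagion_at_0} is needed and used. (The right endpoint at $S$ is genuinely automatic by the constant extension on $[T,S]$.) So Lemma \ref{lem:contagion_at_0} enters exactly once, at the left endpoint, not in the interior.
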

\begin{proof}
By Assumption \ref{assump:paramters_emperical}, there is uniform constant $c>0$ such that $|\mathbf{u}_i|+|\mathbf{v}_i|\leq c $ for all $i=1,\ldots,n$ and $n\geq 1$. If we consider any set $K \in \mathcal{B}(\mathbb{R}^k\times\mathbb{R}^k)\otimes\mathcal{B}( D_\mathbb{R})$ of the form $K = \bar{B}_c(0) \times A$, where $\bar{B}_c(0)$ is the closed ball of radius $c>0$ around the origin in $\mathbb{R}^{k}\times\mathbb{R}^{k} $, we thus have
\begin{equation}\label{eq:tight0}
\mathbb{E}[\mathbf{P}^n(K^\complement)]\leq  \mathbb{E}[\mathbf{P}^n(\mathbb{R}^k\times\mathbb{R}^k\times A^\complement)] \leq  \max_{i=1,\ldots, n} \mathbb{P}(X_i(\cdot) \in A^\complement).
\end{equation}
As in \cite{AT89}, we define
\[
\\M_{X_{i}}(t_{1},t,t_{2}) :=\begin{cases}
	|X_{i}(t_{1})-X_{i}(t)|\land|X_{i}(t_{2})-X_{i}(t)|, & X_{i}(t)\notin[X_{i}(t_{1}),X_{i}(t_{2})]\\
	0, & X_{i}(t)\in[X_{i}(t_{1}),X_{i}(t_{2})]
\end{cases}
\]
and consider the oscillation function for the M1 topology given by
\[
w_{\delta,\text{M1}}(X_i):=\sup\bigl\{ M_{X_{i}}(t_{1},t,t_{2}) : 0\leq t_1 < t < t_2 \leq T, \; t_2-t_1 \leq \delta \bigr\}.
\]
Recalling the definition of $F_i^n$ in \eqref{eq:F_i}, it follows from Assumption \ref{assump:paramters_emperical} that the paths $t\mapsto F^n_i(t)$ are increasing. Exploiting this fact, we can check that
\[
M_{X_{i}}(t_{1},t,t_{2}) \leq |(X_{i}+F^n_i)(t_1)-(X_{i}+F^n_i)(t)| + |(X_{i}+F^n_i)(t_2)-(X_{i}+F^n_i)(t)|,
\]
and hence, using the bounds on the coefficients given by Assumption \ref{assump:paramters_emperical}, the nice continuous dynamics of
\[
d(X_{i}+F^n_i)(t) = b_i(t)dt+\rho\sigma_i(t)dB_0(t) + \sqrt{1-\rho^2}\sigma_i(t)dB_i(t) 
\]
allow for a simple application of Markov's inequality and  Burkholder-Davis-Gundy to deduce
\[
\mathbb{P}\bigl( M_{X_{i}}(t_{1},t,t_{2}) \geq \varepsilon \bigr) \leq C_0\varepsilon^{-2}|t_2-t_1|^2,\quad 0\leq t_1 \leq t \leq t_2 \leq T,\quad\varepsilon>0,
\]
for a fixed constant $C_0>0$ that is uniform in $i\leq n$ and $n\geq1$. Armed with this estimate, it follows from \cite[Theorem 1]{AT89} that we get the bound
\begin{equation}\label{eq:tight1}
\max_{i=1,\ldots,n}\mathbb{P}(w_{\delta,\text{M1}}(X_i) \geq \varepsilon) \leq \tilde{C_0}\varepsilon^{-2}\delta
\end{equation}
for all $\delta,\varepsilon>0$ and $n\geq1$, for another fixed constant $\tilde{C_0}>0$. Moreover, we can apply Lemma \ref{lem:contagion_at_0} to deduce that
\begin{equation}\label{eq:tight2}
\lim_{\delta \downarrow 0}\;\limsup_{n\rightarrow \infty} \max_{i=1,\ldots,n} \mathbb{P}\bigl( \sup_{t_1,t_2\in[0,\delta]} |X_i(t_1)-X_i(t_2)| \geq \varepsilon   \bigr) =0,
\end{equation}
which is the key part of why we can get tightness for the M1 topology. At the other endpoint, we automatically have
\begin{equation}\label{eq:tight3}
\lim_{\delta \downarrow 0}\sup_{n\geq 1} \max_{i=1,\ldots,n} \mathbb{P}\bigl( \sup_{t_1,t_2\in[S-\delta,S]} |X_i(t_1)-X_i(t_2)| \geq \varepsilon   \bigr) =0,
\end{equation}
since the probability vanishes for all $\delta<T-S$, by our continuous extension $X_i(t)=X_i(T)$ for $t\in[T,S]$. Next, the dynamics of each $X_i$ and Assumption \ref{assump:paramters_emperical} are easily seen to imply the compact containment condition
\begin{equation}\label{eq:tight4}
\limsup_{R\rightarrow\infty} \;\sup_{n\geq 1}\max_{i=1,\ldots,n}\mathbb{P}\bigl(\sup_{t\in[0,T]}|X_i(t)| \geq R\bigr) = 0.
\end{equation}
Letting
\[
A(\delta,m):=\Bigl\{\eta \in D_\mathbb{R} : w_{\delta,\text{M1}}(\eta) \lor \sup_{s,t\in[T-\delta,T]} |\eta(s)-\eta(t)| \lor \sup_{s,t\in[0,\delta]} |\eta(s)-\eta(t)|  < m^{-1}  \Bigr\},
\]
it follows from \eqref{eq:tight1}, \eqref{eq:tight2}, and \eqref{eq:tight3}, that we can find $\delta_{\epsilon,m}>0$ such that
\[
\limsup_{n\rightarrow\infty}\max_{i=1,\ldots,n}\mathbb{P}\bigl( X_i(\cdot) \in A(\delta_{\varepsilon,m},m)^\complement  \bigr) \leq \frac{\varepsilon}{2^m}.
\]
Using also \eqref{eq:tight4}, and setting
\[
K_{\varepsilon,l}:=\bar{B}_c(0)\times \bar{A}_{\varepsilon,l} ,\quad A_{\varepsilon,l}:=\bigcap_{m=1}^\infty A(\delta_{\varepsilon,m+2l},m)\cap \{\eta: \sup_{s\in[0,T]}|\eta(s)| < R_\epsilon \},
\]
for a large enough $R_\varepsilon>0$, it follows from the above and \eqref{eq:tight0} that
\begin{equation}\label{eq:tight5}
\limsup_{n\rightarrow\infty}\mathbb{E}[\mathbf{P}^n (K^\complement_{\varepsilon,l})] \leq  \limsup_{n\rightarrow\infty}\max_{i=1,\ldots,n}\mathbb{P}\bigl( X_i(\cdot) \in  A_{\varepsilon,l}^\complement  \bigr) \leq \sum_{m=1}^\infty \frac{\varepsilon}{2^{m+2l}} =
\frac{\varepsilon}{4^{l}} ,
\end{equation}
for any $\varepsilon>0$. By construction, each $A_{\varepsilon,l}$ is relatively compact in $D_\mathbb{R}$ for the M1 topology, as e.g.~follows from the characterisation in \cite[Thereom 12.12.2]{whitt_2002}, so each $K_{\varepsilon,l}$ is compact for the M1 topology.
Moreover, \eqref{eq:tight5} yields
\begin{align*}
\limsup_{n\rightarrow\infty} \mathbb{P}\Bigl( \bigcup_{l=1}^\infty \bigl\{ \mathbf{P}^n( K^\complement_{\varepsilon,l}) > 2^{-l} \bigr\} \Bigr) 
&\leq \limsup_{n\rightarrow\infty}
\sum_{l=1}^\infty \mathbb{P}\Bigl(  \mathbf{P}^n( K^\complement_{\varepsilon,l}) > 2^{-l} \Bigr) \\
 & \leq  \sum_{l=1}^\infty 2^{l} \limsup_{n\rightarrow\infty}\mathbb{E}[\mathbf{P}^n (K^\complement_{\varepsilon,l})] \leq  \sum_{l=1}^\infty \frac{\varepsilon}{2^{l}} = \varepsilon.
 \end{align*}
It remains to note that the set $\cap_{l=1}^\infty \{\mu:\mu(K^\complement_{\varepsilon,l})\leq 2^{-l}\}$ is closed in $\mathcal{P}(D_\mathbb{R})$ by the Portmanteu theorem (under the topology of weak convergence of measures induced by the M1 topology on $D_\mathbb{R}$), as each $K^\complement_{\varepsilon,l}$ is open, and that it forms a tight family of probability measures by construction, as each $K_{\varepsilon,l}$ is compact. Therefore, Prokhorov's theorem gives that the set is compact, since $\mathcal{P}(D_\mathbb{R})$ is a Polish space for the topology we are working with. In turn, we can conclude that $(\mathbf{P}^n)_{n\geq1}$ is indeed a tight sequence of random probability measures when $\mathcal{P}(D_\mathbb{R})$ is given the topology of weak convergence induced from the M1 topology on $D_\mathbb{R}$.
\end{proof}

\subsection{Identifying a suitable probabilistic setup for the mean-field limit}\label{subsect:characterise_limit}

Recall that the empirical measures $\mathbf{P}^n$ are random variables valued in the space of probability measures $\mathcal{P}(\mathbb{R}^k\times\mathbb{R}^k\times D_\mathbb{R})$. For $(u,v,\eta)\in\mathbb{R}^k\times\mathbb{R}^k\times D_\mathbb{R} $, we define the coordinate projections
\[
\pi_{1,l}(u,v,\eta):=u_l,\quad \pi_{2,l}(u,v,\eta):=v_l,\quad \text{and} \quad \pi_{3}(u,v,\eta)(t):=\eta(t)
\]
as well as 
\[
\pi_{t}(u,v,\eta):=(u,v,\pi_3(u,v,\eta)(t))=(u,v,\eta(t))\quad \text{and} \quad \pi_{(1,2)}(u,v,\eta):=(u,v).
\]
Writing $\mathbf{P}^n_t:=\mathbf{P}^n \circ \pi_{t}^{-1}$, for $t\geq0$, and $\varpi^n:=\mathbf{P}^n \circ \pi_{(1,2)}^{-1}$, the conditions \eqref{eq:emp_meas_network} and \eqref{eq:emp_meas_network1} from Assumption \ref{assump:paramters_emperical} read as
\begin{equation}\label{eq:weak_conv_net_initial}
d\varpi^n(u,v) \rightarrow d\varpi(u,v)\quad \text{and}\quad
d\mathbf{P}^n_0(u,v,x) \rightarrow d\nu_0(x|u,v)d\varpi(u,v),
\end{equation}
where the mode of convergence is weak convergence of measures.  Given this, we can make the following simple observation, guaranteeing that the limiting distribution $\varpi$ behaves as we would like it to behave.

\begin{lem}[The limiting type-distribution]\label{eq:uv-compact-nonneg}
	Let Assumption \ref{assump:paramters_emperical} be in place. Writing $(\mathbf{u},\mathbf{v})\sim \varpi$, we let $S(\mathbf{u})$ and $S(\mathbf{u})$ denote the support of $\mathbf{u}$ and $\mathbf{v}$ respectively. Then $S(\mathbf{u})$ and $S(\mathbf{v})$ are both compact in $\mathbb{R}^k$, and we have that, for
	\[
	u\cdot v \geq 0, \qquad \text{for all}\;\; u\in S(\mathbf{u}) \;\; \text{and} \;\; v\in S(\mathbf{v}).
	\]
\end{lem}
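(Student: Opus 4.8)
\medskip
\noindent\textbf{Proof proposal.} The plan is to transport the two uniform properties of the particle indices supplied by Assumption~\ref{assump:paramters_emperical}---the uniform bound $|u^i|+|v^i|\le C$ and the sign condition $u^i\cdot v^j\ge 0$ (which the particle-system loss processes $\mathbf{L}^n_{v^i}$ require in order to be non-decreasing)---through the weak convergence $\varpi^n\to\varpi$ recorded in \eqref{eq:weak_conv_net_initial}, using the Portmanteau theorem. Throughout I write $\varpi^n=\tfrac1n\sum_{i=1}^n\delta_{(u^i,v^i)}$, and $\pi_1,\pi_2$ for the two coordinate projections on $\mathbb{R}^k\times\mathbb{R}^k$.

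For the compactness statement I would first observe that $|u^i|+|v^i|\le C$ forces every $\varpi^n$ to be supported on the closed ball $\bar B_C(0)\subset\mathbb{R}^k\times\mathbb{R}^k$, so $\varpi^n(\bar B_C(0))=1$ for all $n$. Since $\bar B_C(0)$ is closed, Portmanteau gives $\varpi(\bar B_C(0))\ge\limsup_{n}\varpi^n(\bar B_C(0))=1$, so the joint support $S(\mathbf u,\mathbf v)$ is a closed subset of the compact ball and is therefore compact. The marginal supports are then compact as well, being the continuous images $S(\mathbf u)=\pi_1(S(\mathbf u,\mathbf v))$ and $S(\mathbf v)=\pi_2(S(\mathbf u,\mathbf v))$ of the compact joint support.

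For the sign condition, fix $u_0\in S(\mathbf u)$ and $v_0\in S(\mathbf v)$; the goal is $u_0\cdot v_0\ge 0$. Since the $\mathbf u$- and $\mathbf v$-marginals of $\varpi^n$ converge weakly to those of $\varpi$ by \eqref{eq:weak_conv_net_initial}, the open-set half of Portmanteau yields, for each $\varepsilon>0$, that $(\varpi^n\circ\pi_1^{-1})(B_\varepsilon(u_0))>0$ for all large $n$, hence an index $i_n\le n$ with $|u^{i_n}-u_0|<\varepsilon$; diagonalising over $\varepsilon\downarrow 0$ produces, along a subsequence of $n$, admissible indices $i_n\le n$ with $u^{i_n}\to u_0$, and likewise $j_n\le n$ with $v^{j_n}\to v_0$. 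The pairwise sign condition gives $u^{i_n}\cdot v^{j_n}\ge 0$ for every $n$, so letting $n\to\infty$ yields $u_0\cdot v_0\ge 0$; as $u_0,v_0$ were arbitrary, $\varpi$ satisfies Assumption~\ref{uv_assump}.

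The step I expect to be the main obstacle---indeed essentially the only non-mechanical point---is the \emph{product} structure of the sign condition. Applying Portmanteau directly to the closed set $\{(u,v):u\cdot v\ge 0\}$ only delivers $S(\mathbf u,\mathbf v)\subseteq\{u\cdot v\ge 0\}$, which is strictly weaker than the required $S(\mathbf u)\times S(\mathbf v)\subseteq\{u\cdot v\ge 0\}$, since the joint support can be a proper subset of the product of the marginal supports. Closing this gap is exactly what forces the index-extraction argument above, and in executing it one has to be careful that the extracted indices remain in the admissible range $\{1,\dots,n\}$ as $n\to\infty$. Everything else is routine weak-convergence bookkeeping.
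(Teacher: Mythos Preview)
Your argument is correct, and for the compactness statement it is essentially identical to the paper's.  For the sign condition, however, you take a genuinely different route.  The paper argues via product measures: since the marginals $\varpi^n_1,\varpi^n_2$ converge weakly to $\varpi_1,\varpi_2$, so do the products $\varpi^n_1\otimes\varpi^n_2\to\varpi_1\otimes\varpi_2$, and then Portmanteau applied to the closed set $\{(u,v):u\cdot v\ge 0\}$ in $\mathbb{R}^k\times\mathbb{R}^k$ yields $(\varpi_1\otimes\varpi_2)(\{u\cdot v\ge 0\})=1$; since the support of a product is the product of the supports, this gives $S(\mathbf u)\times S(\mathbf v)\subseteq\{u\cdot v\ge 0\}$ directly.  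Your index-extraction argument is more hands-on but also more elementary: it avoids invoking the preservation of weak convergence under products and the identification of the support of a product measure, at the cost of the diagonalisation step you flag.  Both approaches lean on the \emph{pairwise} condition $u^i\cdot v^j\ge 0$ for all $i,j\le n$ (as you correctly note is forced by monotonicity of $\mathbf L^n_{v^i}$), and both deliver the full product-support conclusion rather than merely $S(\mathbf u,\mathbf v)\subseteq\{u\cdot v\ge 0\}$.  The paper's route is slicker if one is comfortable with the product-measure facts; yours is self-contained and makes the role of the pairwise assumption more transparent.
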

\begin{proof}First of all, the compactness follows by noting that \eqref{eq:weak_conv_net_initial} gives
\[
\mathbb{P}( \sqrt{|\mathbf{u}|^2 + |\mathbf{v}|^2} \leq C ) \geq \limsup_{n\rightarrow\infty} \varpi^n( \{ (u,v) : \sqrt{|u|^2 + |v|^2} \leq C \}) =1,
\]
for a large enough $C>0$,
due to the Portmanteu theorem and Assumption \ref{assump:paramters_emperical}. Moreover, again by Assumption \ref{assump:paramters_emperical},  it holds for each $n\geq1$ that
	\[
	\int_{\mathbb{R}^k}\varpi^n(\{u:u\cdot v \geq 0\}\times \mathbb{R}^k) d\varpi^n(v) = \frac{1}{n^2}\sum_{i,j=1}^n \mathbf{1}_{\{u^i\cdot v^j \geq 0\}} = 1.
	\]
Since the marginals are weakly convergent, by \eqref{eq:weak_conv_net_initial}, we have weak convergence of the product measures $\varpi^n_1 \otimes \varpi^n_2$ to $\varpi_1 \otimes \varpi_2$, and hence the Portmanteu theorem gives
	\begin{equation*}
\int_{\mathbb{R}^k}\int_{\mathbb{R}^k} \mathbf{1}_{\{u\cdot v\geq 0\}} d \varpi_1(u) d\varpi_2(v) \\ \geq \limsup_{n\rightarrow \infty} \int_{\mathbb{R}^k}\int_{\mathbb{R}^k} \mathbf{1}_{\{u\cdot v\geq 0\}} d\varpi^n_1(u) d\varpi^n_2(v) = 1,
\end{equation*}
by the previous equality. In turn, for $\varpi_1$-a.e.~$v\in\mathbb{R}^k$, it holds that $u \cdot v \geq 0$ for $\varpi_2$-a.e.~$u\in\mathbb{R}^k$, which finishes the proof.
\end{proof}

Throughout the rest of the paper, we fix a terminal time $T>0$ and an arbitrary $S>T$, as in Proposition \ref{prop:tightness}. Naturally, our arguments will apply for any $T>0$.

By Prokhorov's theorem, Proposition \ref{prop:tightness} gives that any subsequence of $(\mathbf{P}^n,B_0)_{n\geq1}$ has a further subsequence converging in law to a limit point $(\mathbf{P}^\star,B_0)$, whose law we denote by $\mathbb{P}^\star_0$. The limiting law $\mathbb{P}^\star_0$ is realised as a Borel probability measure on the product $\sigma$-algebra  $\mathcal{B}(\mathcal{P}(\mathbb{R}_k\times\mathbb{R}_k) \times D_\mathbb{R}) \otimes \mathcal{B}(C_\mathbb{R}) $ such that the second marginal of $\mathbb{P}^\star_0$ is the law of a standard Brownian motion $B_0:\Omega \rightarrow C_\mathbb{R}$ with $C_\mathbb{R}=C_\mathbb{R}[0,S]$, while the first marginal is the law of a random probability measure $\mathbf{P}^\star:\Omega \rightarrow \mathcal{P}(\mathbb{R}_k\times\mathbb{R}_k \times D_\mathbb{R})$ with $D_\mathbb{R}=D_\mathbb{R}[0,S]$. Moreover, from Assumption \ref{assump:paramters_emperical} we have that $\mathbb{P}^\star(\omega) \circ \pi_{(1,2)}^{-1} =\varpi$ for all $\omega\in\Omega$, so we can write $\mathbf{P}^\star:\Omega \rightarrow \mathcal{P}_\varpi(D_\mathbb{R})$, where $\mathcal{P}_\varpi(D_\mathbb{R})$ denotes the subspace of all $\mu \in \mathcal{P}(\mathbb{R}_k\times\mathbb{R}_k \times D_\mathbb{R})$ with fixed marginal $\mu\circ \pi_{(1,2)}^{-1}=\varpi$, as in the discussion after the statement of Theorem \ref{thm:mf_conv}. Later, we will use this to identify $\mathbf{P}^\star$ with a random Markov kernel, as per Definition \ref{defn:random_markov_kernel}.

Throughout what follows, we fix a given limit point $(\mathbf{P}^\star,B_0)$. For concreteness, we take this limit point $(\mathbf{P}^\star,B_0)$ to be defined on the canonical background space $(\Omega_0,\mathcal{B}(\Omega_0),\mathbb{P}^\star_0)$, where we set $\Omega_0:=\mathcal{P}_{\varpi}(D_\mathbb{R})\times C_\mathbb{R}$ and, as discussed above, we let $\mathbb{P}^\star_0\in \mathcal{P}(\Omega_0)$ denote the limiting law of the joint laws of $(\mathbf{P}^n,B_0)_{n\geq 1}$ along the given subsequence. Then we can simply write $(\mathbf{P}^\star,B_0)$ as the identity map $(\mathbf{P}^\star,B_0)(\mu,w)=(\mu,w)$ on $\Omega_0$ and we have $\mathcal{B}(\Omega_0)=\sigma(\mathbf{P}^\star,B_0)$.

Given $\mathbb{P}_0^\star$, we can define a probability measure $\mathbb{P}^\star$ on
\[
 \mathcal{B}(\mathbb{R}^k \times \mathbb{R}^k) \otimes\mathcal{B} ( \mathcal{P}_{\varpi}(D_\mathbb{R})\times  C_\mathbb{R} ) \otimes \mathcal{B}( D_\mathbb{R})
\]
by letting
\begin{equation}\label{eq:defn_big_P_star}
\mathbb{P}^\star(O \times E \times A ) := \int_{ E } \mu(O \times A) d \mathbb{P}_0^\star(\mu , w) 
\end{equation}
for $O\in\mathcal{B}(\mathbb{R}^k \times \mathbb{R}^k)$,  $E\in \mathcal{B} ( \mathcal{P}_\varpi(D_\mathbb{R}) \times  C_\mathbb{R} )$, and $A\in \mathcal{B}( D_\mathbb{R})$. Now consider the disintegration of $\mathbb{P}^\star$ with respect to the projection $\hat{\pi}_{1,2}((u,v),(\mu,w),\eta):=((u,v),(\mu ,w))$. This yields a Markov kernel
\begin{equation}\label{eq:1st_Markov_kernel}
((u,v),(\mu,w)) \mapsto \mathbb{P}_{u,v}^{\mu,w}\in \mathcal{P}(D_\mathbb{R})
\end{equation}
such that
\begin{equation}\label{eq:disintegration}
\mathbb{P}^\star(O \times E \times A )=\int_O\int_{E } \mathbb{P}_{u,v}^{\mu,w}(A) d\mathbb{P}_0^\star(\mu, w) d\varpi(u,v),
\end{equation}
for any $O\times E \times A \in \mathcal{B}(\mathbb{R}^k \times \mathbb{R}^k) \otimes\mathcal{B} ( \mathcal{P}_{\varpi}\times  C_\mathbb{R} ) \otimes \mathcal{B}(D_\mathbb{R}) $, by Tonelli's theorem, since
\[
\mathbb{P}^\star \circ \hat{\pi}_{1,2}^{-1} = \mathbb{P}_0^\star \otimes \varpi.
\]
Indeed, we have $\mu(O\times D_\mathbb{R})=\varpi(O)$ for $\mathbb{P}_0^\star$-a.e.~$(\mu,w)\in E$, and hence $\mathbb{P}^\star \circ \hat{\pi}_{1,2}^{-1}(O\times E)=\mathbb{P}^\star(O\times E\times A)=\varpi(O)\mathbb{P}_0^\star(E)$, for any given set $O\times E$ as above.

From here, we define a family of probability measures $ \mathbb{P}^\star_{u,v}$ on $\mathcal{B} ( \mathcal{P}_{\varpi}(D_\mathbb{R})\times  C_\mathbb{R} ) \otimes \mathcal{B}( D_\mathbb{R})$ by

\begin{equation}\label{eq:P_uv_defn}
 \mathbb{P}^\star_{u,v} (E \times A) := \int_E \mathbb{P}_{u,v}^{\mu, w}(A) d\mathbb{P}^\star_0(\mu, w),\quad(u,v)\in\mathbb{R}^k\times\mathbb{R}^k,
\end{equation}
where the joint measurability of each $\mathbb{P}_{u,v}^{\mu, w}(A)$ ensures that 
\[
(u,v)\mapsto \mathbb{P}^\star_{u,v} \in\mathcal{P}(\Omega_\star)\quad \Omega_\star:= \Omega_0 \times D_\mathbb{R} =\mathcal{P}_{\varpi}(D_\mathbb{R})\times C_\mathbb{R} \times D_\mathbb{R}
\]
is a Markov kernel. Notice also that that marginal of $\mathbb{P}_{u,v}^\star$ on $(\Omega_0,\mathcal{B}(\Omega_0))$ is always $\mathbb{P}_0^\star$, since, for any $E\in \mathcal{B}(\Omega_0)$,
\[
\mathbb{P}_{u,v}^\star(E\times D_\mathbb{R}) =  \int_E \mathbb{P}_{u,v}^{\mu, w}(D_\mathbb{R}) d\mathbb{P}^\star_0(\mu, w) = \mathbb{P}^\star_0(E).
\]
The family of probability measures $ \mathbb{P}^\star_{u,v}$ will play a critical role in what follows. 

On the background space $(\Omega_\star,\mathcal{B}(\Omega_\star))$, we define the three random variables
\begin{equation}\label{eq:P-B_0-Z}
\mathbf{P}^\star(\mu,w,\eta):=\mu, \quad B_0(\mu,w,\eta):=w, \quad \text{and}\quad Z(\mu,w,\eta)=\eta.
\end{equation}
By construction, for any $(u,v)\in \mathbb{R}^k\times\mathbb{R}^k$, we then have
\[
\mathbb{P}_{u,v}^\star\bigl(Z\in A , (\mathbf{P}^\star, B_0)\in E_1\times E_2 \bigr) = \int_{E_1\times E_2} \mathbb{P}_{u,v}^{\mu, w}(A) d\mathbb{P}^\star_0(\mu, w)
\]
and
\[
\mathbb{P}_{u,v}^\star\bigl(  \mathbf{P}^\star\in E_1, B_0\in E_2 \bigr) = \mathbb{P}^\star_0(E_1\times E_2),
\]
for any $A\in \mathcal{B}(D_\mathbb{R})$, $E_1\in \mathcal{B}(\mathcal{P}_{\varpi})$, and $E_2\in \mathcal{B}(C_\mathbb{R})$. Consequently, we have
\[
\mathbb{P}^\star_{u,v}(Z\in A \, | \, \mathbf{P}^\star, B_0)= \mathbb{P}_{u,v}^{\mathbf{P}^\star\!\!, B_0}(A)
\]
for all $A\in \mathcal{B}(D_\mathbb{R})$, and we stress that the joint law of $(\mathbf{P}^\star, B_0)$ is the same under every $\mathbb{P}_{u,v}^\star$.

Now consider the hitting-time map $\tau_0: D_\mathbb{R} \rightarrow \mathbb{R}$ given by
\begin{equation}\label{eq:hitting-time_map}
\tau_0(\eta):=\inf \{ t\geq 0 : \eta_s  \leq 0   \}.
\end{equation}
Then $\tau_0(Z):\Omega^\star\rightarrow \mathbb{R}$ is the first hitting time of zero for the process $Z$ defined above. The above constructions lead us to define the following candidates for the limiting feedback in our mean-field problem, namely $\mathcal{L}^\star_l : \Omega_0 \rightarrow D_\mathbb{R}$, for $l=1,\ldots,k$, given by
\begin{align}\label{eq:limit_feedback_processes}
\mathcal{L}_l^\star(t) &:= \int_{\mathbb{R}^k\times\mathbb{R}^k} u_l \mathbb{P}^\star_{u,v}\bigl( t\geq \tau_0(Z)  \, | \, B_0, \mathbf{P}^\star \bigr)d\varpi(u,v) \nonumber \\
&=\int_{\mathbb{R}^k\times\mathbb{R}^k} u_l \mathbb{P}_{u,v}^{\mathbf{P}^\star\!\!, B_0}\bigl( t\geq \tau_0(Z) \bigr)d\varpi(u,v),\quad \text{for} \quad t\geq0,
\end{align}
on the background space $(\Omega_0,\mathcal{B}(\Omega_0))$. Naturally, we may also view these as stochastic processes defined on $(\Omega_\star,\mathcal{B}(\Omega_\star))$. In the next section, we confirm that these candidates are indeed inducing the limiting laws of the empirical feedback $\mathcal{L}^n_l$, for $l=1,\ldots,k$, when considered on the probability space $(\Omega_\star,\mathcal{B}(\Omega_\star), \mathbb{P}_{u,v}^\star)$, for $\varpi$-almost every pair of type vectors $(u,v)\in\mathbb{R}^k\times\mathbb{R}^k$. To this end, it will be useful to consider the particular set of continuity times
\begin{equation}\label{eq:cont_times}
\mathbb{T}_\star:= \Bigl\{  t\in[0,T] : \int_{\mathbb{R}^k\times \mathbb{R}^k}\!\!\mathbb{P}_{u,v}^\star \bigl( \{Z(t)=Z(t-)\} \textstyle{\bigcap_{l=1}^k}\{ \mathcal{L}^\star_l(t)=\mathcal{L}^\star_l(t-)\} \bigr)d\varpi(u,v) =1  \Bigr\}.
\end{equation} 
Observe that the complement of $\mathbb{T}_\star$ in $[0,T]$ is at most countably infinite. Indeed, the assignment $A\mapsto \int_{\mathbb{R}^k\times \mathbb{R}^k}\mathbb{P}^\star_{u,v}(A)d\varpi(u,v)$ yields a well-defined probability measure on $\mathcal{B}(\Omega_\star)$, and, since $Z$ is c\`adl\`ag by definition, dominated converge also shows that each $\mathcal{L}^\star$ is c\`adl\`ag. Hence the claim follows from \cite[Sect.~13]{Billingsley}. We shall make abundant use of this fact in our convergence arguments.

Throughout what follows, we always take the Skorokhod space $D_\mathbb{R}$ to be endowed with Skorokhod's M1 topology. Moreover, we let $\mathfrak{T}_{wk}^{\text{M1}}$ denote the topology corresponding to weak convergence of measures in $\mathcal{P}_\varpi(D_\mathbb{R}) \cong \mathcal{P}(\mathbb{R}\times \mathbb{R} \times D_\mathbb{R})$ induced by the M1 topology $D_\mathbb{R}$.

\subsection{Convergence of the feedback along with the empirical measures}\label{subsect:feedback_conv}

In this subsection, we study the feedback from defaults felt by each institution, as the number of institutions tends to infinity along a convergent subsequence of the empirical measures. These results are essential to the motivating applications and, given tightness of the system, they form the critical technical hurdles towards obtaining the mean-field limit.

\begin{prop}[Hitting-time continuity]\label{prop:hitting_time_cont} As in Section \ref{subsect:feedback_conv}, let $\mathbf{P}^\star$ be a given limit point, in law, of the empirical measures $\mathbf{P}^n$, and consider the resulting Markov kernel $\{\mathbb{P}^\star_{u,v}\}_{(u,v)\in\mathbb{R}^k\times\mathbb{R}^k}$ on $D_\mathbb{R}$ defined in \eqref{eq:P_uv_defn}. For $\varpi$-almost every $(u,v)\in \mathbb{R}^k\times\mathbb{R}^k$, the hitting-time map $\tau_0 : D_\mathbb{R}\rightarrow  \mathbb{R}$ from \eqref{eq:hitting-time_map} is continuous in the M1 topology on $D_\mathbb{R}$ at $\mathbb{P}^\star_{u,v}$-almost every $\eta \in D_\mathbb{R}$.
\end{prop}
\begin{proof}Let $Y_i(t):=\int_0^t\sigma(s)dW_i(s)$ where $W_i(t)=\rho B_0(t) + \sqrt{1-\rho^2}B_i(t)$. Then consider the family of probability measures $\mathbb{Q}^n$ on the Borel $\sigma$-algebra of $(\mathbb{R}^k\times \mathbb{R}^k) \times D_\mathbb{R}\times C_\mathbb{R}$, defined by
	\[
	\mathbb{Q}^n(O\times A \times B):= \frac{1}{n}\sum_{i=1}^n \mathbb{P}(X_i\in A, Y_i \in B)\mathbf{1}_{\{(u^i,v^i)\in O\}},\quad n\geq1,
	\]
	where we are averaging over the joint laws of each particle and its martingale part for the particles within a given set of types $O$. As in the proof of Proposition \ref{prop:tightness}, we can show $(\mathbb{Q}^n)_{n\geq1}$ is tight. Following the procedure in Section \ref{subsect:characterise_limit}, and exploiting the continuity of the marginal projections, we can then deduce that there is a limit point $\mathbb{Q}^\star \cong \{\mathbb{Q}^\star _{u,v}\}_{(u,v)\in\mathbb{R}^k \times \mathbb{R}^k} $ in $\mathcal{P}_\varpi( D_\mathbb{R}\times C_\mathbb{R})$ such that
	\begin{equation}\label{eq:from_joint_to_marginal}
	\int_{\mathbb{R}^k\times\mathbb{R}^k}\mathbb{P}^\star_{u,v}(A)d\varpi(u,v)= 	\int_{\mathbb{R}^k\times\mathbb{R}^k}\mathbb{Q}^\star_{u,v}(A \times C_\mathbb{R})d\varpi(u,v)
	\end{equation}
	for all $A\in \mathcal{B}(D_\mathbb{R})$, which we will utilise at the end of the proof. Moreover, we see that
	\[
 \mathbb{Q}^n(\mathbb{R}^k\times\mathbb{R}^k\times D_\mathbb{R} \times B)=\mathbb{P}(Y_1\in B), \quad \text{for all}\quad B \in \mathcal{B}(C_\mathbb{R}),
\]
gives the law of Brownian motion time-changed by $t\mapsto\sigma(t)$, for all $n \geq 1$. Noting also that future increments of the $Y_i$'s are independent of the filtration generated by all the particles up to any given time, we can therefore let $(Z^\star,Y^\star)(\eta,w):=(\eta ,w)$, for all $(\eta,w)\in D_\mathbb{R}\times C_\mathbb{R}$, and conclude from the weak convergence that $Y^\star$ has the law of a time-changed Brownian motion under $\mathbb{Q}_{u,v}^\star$ with respect to the filtration generated by the pair $(Z^\star,Y^\star)$. Let $\mathbb{T}:=\{t\geq 0 : \int_{\mathbb{R}^k\times\mathbb{R}^k} \mathbb{Q}^\star_{u,v}(Z^\star(t) = Z(t-))d\varpi(u,v) = 1 \}$ and consider the events
	\[
	E_{u,v} := \bigcap_{q\leq r \in \mathbb{Q}\cap \mathbb{T}}\Bigl\{ Z^\star(r) -  Z^\star(q) \,\leq\, Y^\star(r) - Y^\star(q) + \int_r^q b_{u,v}(s)ds   \Bigr\}
	\]
in $\mathcal{B}(D_\mathbb{R}\times C_\mathbb{R})$. By definition of $\mathbb{T}$, it follows from the continuity of $(s,u,v)\mapsto b_{u,v}(s)$ and the M1 continuity of marginal projections at continuity points \cite[Thm.~12.4.1]{whitt_2002} that $(u,v,\eta ,w)\mapsto \mathbf{1}_{E_{u,v}}(\eta,w)$ is upper semi-continuous with probability one under $\mathbb{Q}^\star$ (for the product topology induced by the M1 topology on $D_\mathbb{R}$ and the uniform topology on $C_\mathbb{R}$). In turn, the weak convergence of $\mathbb{Q}^n$ to $\mathbb{Q}^\star$  implies
\begin{align*}
&\int_{\mathbb{R}^k\times\mathbb{R}^k} \mathbb{Q}^\star_{u,v}(E_{u,v})d\varpi(u,v)=\mathbb{Q}^\star\bigl(   \{ (u,v,\eta , w) : (\eta , w)\in E_{u,v}    \}     \bigr)  \\ 
&\quad \geq \limsup_{n\rightarrow \infty}\mathbb{Q}^n\bigl(   \{ (u,v,\eta , w) : (\eta , w)\in E_{u,v}    \}     \bigr)  = \limsup_{n\rightarrow \infty} \frac{1}{n}\sum_{i=1}^n\mathbb{P}\bigl((X_i,Y_i)\in E_{u^i,v^i}\bigr)= 1,
\end{align*}
where the last two equalities simply follow from the definition of $\mathbb{Q}^n$ and the definition of the particle system. For the rest of the proof, fix an arbitrary pair $(u,v)$ such that $\mathbb{Q}^\star_{u,v}(E_{u,v})=1$. As per the previous observation, such pairs $(u,v)$ have full measure under $\varpi$. By the right-continuity of $Z^\star$ and the continuity of $Y^\star$, we can then conclude that, $\mathbb{Q}^\star_{u,v}$-almost surely, the increment bounds in the definition of $E_{u,v}$ hold for all pairs of times. In particular, we know that $Z^\star$ can only jump downwards, with probability 1 under $\mathbb{Q}^\star_{u,v}$. Moreover, we know that the re-started process $Y_0^\star(t) :=Y^\star(t+\tau_0(Z^\star))-Y^\star(\tau_0(Z^\star))$ defines a new time-changed Brownian motion under $\mathbb{Q}^\star_{u,v}$, so it follows from the law of the iterated logarithm that we have
\begin{equation}\label{eq:iterated_log}
\liminf_{t\downarrow0}\bigl(Z^\star(t+\tau_0(Z^\star))-Z^\star(\tau_0(Z^\star))\bigr)/ h(t)\leq  -1
\end{equation}
$\mathbb{Q}^\star_{u,v}$-almost surely with $h(t)=c\sqrt{t\ln\ln(1/t)}$ for some $c>0$ (depending only on $t\mapsto\sigma(t)$). 

Now consider the set
\[
E_0:= \{ \eta\in D_\mathbb{R} : \tau_0\;\text{is M1-continuous at}\;\eta \}.
\]
in $D_\mathbb{R}$. Clearly, if a path $\eta$ comes with a (non-empty) right-neighbourhood of $\tau_0(\eta)$ where it only takes non-negative values, then there is an endless supply of uniformly convergent sequences $\eta_n \rightarrow \eta$ such that $\tau_0(\eta_n)$ does not converge to $\tau_0(\eta)$ as $n\rightarrow\infty$, and so $\eta \notin E_0$. Conversely, one can easily deduce from the parametric representations in the definition of M1 convergence that, if a given path $\eta$ assumes strictly negative values on any right-neighbourhood of $\tau_0(\eta)$, then all M1 convergent sequences $\eta_n \rightarrow \eta$ must satisfy $\tau_0(\eta_n)\rightarrow \tau_0(\eta)$ as $n\rightarrow \infty$, which implies $\eta \in E_0$. Hence the set $E_0$ is equivalent to the event that $Z^\star$ assumes strictly negative values on any right-neighbourhood of $\tau_0(Z^\star)$. We can readily express this event in terms of countable unions and intersections of Borel sets, so this event is an element of $\mathcal{B}(D_\mathbb{R})$. Moreover, it is immediate from \eqref{eq:iterated_log} that this event has probability one under $\mathbb{Q}^\star_{u,v}$. Since we fixed an arbitrary vector $(u,v)$ in a set of full measure under $\varpi$, we can conclude from \eqref{eq:from_joint_to_marginal} that
\[
	\int_{\mathbb{R}^k\times\mathbb{R}^k}\mathbb{P}^\star_{u,v}( E_0)d\varpi(u,v) = 	\int_{\mathbb{R}^k\times\mathbb{R}^k}\mathbb{Q}^\star_{u,v}( E_0 \times C_\mathbb{R})d\varpi(u,v) = 	1,
\]
and hence $\mathbb{P}^\star_{u,v}( E_0)=1$ for $\varpi$-almost every $(u,v)\in \mathbb{R}^k \times \mathbb{R}^k$, which completes the proof.
\end{proof}

The above proposition is interesting in its own right, but most importantly it allows us to take a generalised continuous mapping approach to the convergence of the feedback, when seen as suitable functionals of the laws of the empirical measures. The starting point is the following lemma.

\begin{lem}[Marginal feedback as a continuous mapping]\label{lem:first_conv_result}
	Suppose $(\mathbf{Q}^n,B^n)\rightarrow (\mathbf{Q}^\star, B^\star)$ almost surely in the product space $(\mathcal{P}(\mathbb{R}^k\times\mathbb{R}^k\times D_\mathbb{R}),\mathfrak{T}_{wk}^{\text{M1}}) \times (C_\mathbb{R},\Vert \!\cdot \!\Vert_\infty)$, for a given probability space $(\Omega_1, \mathcal{F}_1, \mathbb{P}_1)$, with each $(\mathbf{Q}^n, B^n)$ having the same law as $(\mathbf{P}^n, B_0 )$, and the limit $(\mathbf{Q}^\star, B^\star)$ having the same law as $(\mathbf{P}^\star, B_0 )$. Let $(\mathbf{u}^n,\mathbf{v}^n)\rightarrow (\mathbf{u},\mathbf{v})$ be an almost surely convergent sequence in $\mathbb{R}^k \times \mathbb{R}^k$ on some probability space $(\Omega_2,\mathcal{F}_2,\mathbb{P}_2)$ for which the joint law of $(\mathbf{u}^n,\mathbf{v}^n)$ is $\varpi^n$. Writing
	\begin{equation}\label{eq:loss_mappings}
	\mathcal{L}(\mu)_l:= \int_{\mathbb{R}^k\times{\mathbb{R}^k}\times D_\mathbb{R}}u_l\mathbf{1}_{\{\eta \,:\, t \geq \tau_0(\eta) \}}d\mu(u,v,\eta), \quad\text{for}\quad l=1,\ldots,k,
	\end{equation}
	for $\mu \in \mathcal{P}(\mathbb{R}^k\times\mathbb{R}^k\times D_\mathbb{R})$, there is an event $E \in \Omega_1$ with $\mathbb{P}_1(E)=1$ such that, for every $\omega \in E$, we have the marginal convergence
	\[
	\sum_{l=1}^k \mathbf{v}^n_l \int_0^t g_{\mathbf{u}^n,\mathbf{v}^n}(s) d\mathcal{L}(\mathbf{Q}^n(\omega))_l(s) \rightarrow \sum_{l=1}^k \mathbf{v}_l \int_0^t g_{\mathbf{u},\mathbf{v}}(s) d\mathcal{L}(\mathbf{Q}^\star(\omega))_l(s) ,\quad \text{as }\;n\rightarrow \infty,
	\]
	in $\mathbb{R}$, $\mathbb{P}_2$-almost surely, whenever $t$ is a continuity point of each $s\mapsto \mathcal{L}(\mathbf{Q}^\star(\omega))_l(s)$, $l=1,\ldots,n$.
\end{lem}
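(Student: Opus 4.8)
The plan is to reduce the claimed convergence to the combination of two ingredients: (i) continuity of the loss functionals $\mu \mapsto \mathcal{L}(\mu)_l(t)$ at the limiting measure $\mathbf{Q}^\star(\omega)$ for times $t \in \mathbb{T}_\star$, and (ii) joint convergence of the integrators $\mathcal{L}(\mathbf{Q}^n(\omega))_l$ as measures on $[0,T]$, so that the Riemann--Stieltjes integrals against the continuous integrands $g_{\mathbf{u}^n,\mathbf{v}^n}$ pass to the limit. I would set things up on the product space $\Omega \times \Omega'$, fixing $\omega$ in the full-probability event from the hypothesis $(\mathbf{Q}^n,B^n)\to(\mathbf{Q}^\star,B^\star)$ and a full-probability event on $\Omega'$ on which $(\mathbf{u}^n,\mathbf{v}^n)\to(\mathbf{u},\mathbf{v})$; the asserted conclusion is then a deterministic statement for each such pair, and Fubini lets me assemble the claimed $\mathbb{P}$-a.s.\ statement at the end.

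First I would handle the pointwise-in-$t$ convergence of $\mathcal{L}(\mathbf{Q}^n(\omega))_l(t) \to \mathcal{L}(\mathbf{Q}^\star(\omega))_l(t)$ for $t \in \mathbb{T}_\star$. The map $\eta \mapsto \mathbf{1}_{\{\inf_{s\le t}\eta_s \le 0\}}$ is \emph{not} continuous on $D_\mathbb{R}$ with the M1 topology in general, but its set of discontinuities is contained in the set of paths $\eta$ for which $t$ is a jump time or for which $\inf_{s\le t}\eta_s = 0$ is attained ``tangentially''. By the definition of $\mathbb{T}_\star$ in \eqref{eq:cont_times}, for $t \in \mathbb{T}_\star$ the limiting kernel $\mathbb{P}^\star_{u,v}$ assigns zero mass to $\{Z(t)\neq Z(t-)\}$ for $\varpi$-a.e.\ $(u,v)$; since $\mathbf{Q}^\star(\omega)$ disintegrates (via the construction in \eqref{eq:P_uv_defn} relating $\mathbf{P}^\star$ to the kernels $\mathbb{P}^{\mathbf{P}^\star,B_0}_{u,v}$, and using that $(\mathbf{Q}^\star,B^\star)\stackrel{d}{=}(\mathbf{P}^\star,B_0)$) into these conditional laws together with the absolute continuity of the one-dimensional marginals $\mathbb{P}^\star_{u,v}(Z(t)\in\cdot)$ coming from the non-degeneracy in Assumption \ref{MV_assump}, one checks $\mathbf{Q}^\star(\omega)(\{\eta: t \text{ is a discontinuity of } \eta \mapsto \mathbf{1}_{\{\inf_{s\le t}\eta_s\le 0\}}\}) = 0$ for $\omega$ in a full-probability event. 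Weak convergence $\mathbf{Q}^n(\omega)\to\mathbf{Q}^\star(\omega)$ then gives convergence of the integrals of this bounded a.e.-continuous functional; the factor $u_l$ is bounded on the (compact, by Lemma \ref{eq:uv-compact-nonneg}) support, so it causes no trouble. A monotonicity-plus-countable-dense-subset argument (each $\mathcal{L}(\mathbf{Q}^n(\omega))_l$ is non-decreasing in $t$) upgrades pointwise convergence on $\mathbb{T}_\star$ to convergence of $\mathcal{L}(\mathbf{Q}^n(\omega))_l$ as (sub-probability multiples of) measures on $[0,t]$ at every continuity point of the limit, and the limit $\mathcal{L}(\mathbf{Q}^\star(\omega))_l$ is continuous precisely at the points of $\mathbb{T}_\star$.

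With the integrators converging, the second ingredient is to pass to the limit in $\sum_l \mathbf{v}^n_l \int_0^t g_{\mathbf{u}^n,\mathbf{v}^n}(s)\, d\mathcal{L}(\mathbf{Q}^n(\omega))_l(s)$. Here I would split the error into the change of the integrand, $\int_0^t (g_{\mathbf{u}^n,\mathbf{v}^n}(s) - g_{\mathbf{u},\mathbf{v}}(s))\, d\mathcal{L}(\mathbf{Q}^n(\omega))_l(s)$, and the change of the integrator, $\int_0^t g_{\mathbf{u},\mathbf{v}}(s)\,d(\mathcal{L}(\mathbf{Q}^n(\omega))_l - \mathcal{L}(\mathbf{Q}^\star(\omega))_l)(s)$. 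The first term is controlled because $g$ is jointly continuous in $(u,v,s)$ (Assumption \ref{MV_assump}) and $(\mathbf{u}^n,\mathbf{v}^n)\to(\mathbf{u},\mathbf{v})$, so $\sup_{s\le T}|g_{\mathbf{u}^n,\mathbf{v}^n}(s)-g_{\mathbf{u},\mathbf{v}}(s)|\to 0$, while the total masses $\mathcal{L}(\mathbf{Q}^n(\omega))_l(T)$ stay uniformly bounded (again by compactness of $S(\mathbf{u})$). The second term is exactly convergence of Stieltjes integrals of a fixed continuous function against weakly convergent non-decreasing functions with $t$ a continuity point of the limit — a standard Helly/integration-by-parts argument, using $\int_0^t g \, d\mathcal{L}^n = g(t)\mathcal{L}^n(t) - g(0)\mathcal{L}^n(0) - \int_0^t \mathcal{L}^n(s)\,dg(s)$ and bounded convergence (the integrand $g$ is of bounded variation since it is monotone). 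Finally, the prefactors $\mathbf{v}^n_l \to \mathbf{v}_l$ and the sum over $l=1,\dots,k$ is finite, so everything combines.

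I expect the main obstacle to be the first step: verifying that, for $t\in\mathbb{T}_\star$, the limiting empirical measure $\mathbf{Q}^\star(\omega)$ charges no path for which the ``has the running infimum dipped to $0$ by time $t$'' indicator is M1-discontinuous. This requires unpacking the identification of $\mathbf{Q}^\star(\omega)$ with the Markov-kernel disintegration and invoking both the continuity-time condition \eqref{eq:cont_times} and the absolute continuity of the particle marginals (the bounded-density results flagged for Section 4.1, which rely on Assumption \ref{X0_assump} and the non-degeneracy of $\sigma$). The rest — convergence of Stieltjes integrals, uniform mass bounds, finite sums — is routine once this continuity point is nailed down.
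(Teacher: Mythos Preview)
Your overall strategy is close to the paper's: both rely on (a) an almost-everywhere continuity argument for the hitting-time indicator to get pointwise convergence at $t\in\mathbb{T}_\star$, and (b) an integration-by-parts identity to pass the Stieltjes integral against $g$ to the limit. The paper packages step (a) by invoking a second Skorokhod representation on $\mathbf{Q}^n(\omega)\to\mathbf{Q}^\star(\omega)$ and then adapting \cite[Lemma~3.13]{LS18a}; your version via the $\mathbf{Q}^\star(\omega)$-null discontinuity set is essentially the same content.

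There is, however, a concrete gap. You assert that ``each $\mathcal{L}(\mathbf{Q}^n(\omega))_l$ is non-decreasing in $t$'' and use this both for the upgrade to convergence of integrators and in the Helly step. This is false in general: Assumption~\ref{uv_assump} only gives $u\cdot v\geq 0$ for $u\in S(\mathbf{u})$, $v\in S(\mathbf{v})$; the individual coordinates $u_l$ need not be non-negative on the support, so $t\mapsto\int u_l\mathbf{1}_{\{\inf_{s\le t}\eta_s\le 0\}}\,d\mu$ can fail to be monotone. Consequently your integration-by-parts identity $\int_0^t g\,d\mathcal{L}^n_l = g(t)\mathcal{L}^n_l(t)-\int_0^t \mathcal{L}^n_l\,dg$ still makes sense (each $\mathcal{L}^n_l$ is a difference of two monotone functions, hence of bounded variation), but the Helly-type passage to the limit and the ``monotonicity-plus-countable-dense-subset'' upgrade are no longer justified component by component.

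The paper avoids this by never separating the $l$-components: it works directly with the combination
\[
\Phi^n(\mathbf{Q}^n(\omega),\mathbf{v}^n,t)=\sum_{l=1}^k \mathbf{v}^n_l\,\mathcal{L}(\mathbf{Q}^n(\omega))_l(t)=\int \mathbf{v}^n\!\cdot u\;\mathbf{1}_{\{\inf_{s\le t}\eta_s\le 0\}}\,d\mathbf{Q}^n(\omega),
\]
and first establishes that $\mathbf{Q}^n(\omega)(\{(u,v,\eta):u\cdot\mathbf{v}^n\ge 0\})=1$ for all $n$ on a full-probability event (this is where the structural condition $u^i\cdot v^j\ge 0$ from Assumption~\ref{assump:paramters_emperical} and the argument of Lemma~\ref{eq:uv-compact-nonneg} enter). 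That makes $t\mapsto\Phi^n$ genuinely increasing, after which the integration-by-parts reduction goes through cleanly. You can repair your argument either by adopting this ``combine-then-bound'' route, or by splitting $u_l=u_l^+-u_l^-$ and running your monotone argument on each piece separately; the first is cleaner and is what the paper does.
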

\begin{proof} By assumption, we can take an event $E \in \mathcal{F}_1$ with $\mathbb{P}_1(E)=1$ on which there is pointwise convergence $(\mathbf{Q}^n(\omega),B^n(\omega))\rightarrow (\mathbf{Q}^\star(\omega), B^\star(\omega))$ for all $\omega \in E$. Moreover, we have
	\[
	\mathbb{E}\Bigl[ 
	\int_{\mathbb{R}^k\times\mathbb{R}^k }\mathbf{Q}^n(\{ u : u\cdot \hat{v}\geq 0     \} )d\varpi^n(\hat{u},\hat{v})\Bigr]  = \int_{\mathbb{R}^k\times\mathbb{R}^k} \varpi^n(\{ u : u\cdot \hat{v}\geq 0     \} )d\varpi^n(\hat{u},\hat{v})=1,
	\]
	as in the proof of Lemma \ref{eq:uv-compact-nonneg}, and hence we can restrict the event $E$ in such a way that we still have $\mathbb{P}_1(E)=1$, while also having that, for all $\omega\in E$, 
	\begin{equation}\label{eq:type_vectors_dot-product}
	\mathbf{Q}^n(\omega)(\{ u : u\cdot \mathbf{v}^n \geq 0     \})=1,\quad \text{for all } n\geq 1,
	\end{equation}
	$\mathbb{P}_2$-almost surely. We can view $\mathbf{Q}^n(\omega)\rightarrow \mathbf{Q}^\star(\omega)$ as convergence in law for suitable random variables and hence apply Skorokhod's representation theorem to yield
	\begin{equation}\label{eq:marginal_feedback_fct} 
		\Phi(\mathbf{Q}^n(\omega), \mathbf{v}^n , t) = \sum_{l=1}^k \mathbf{v}^n_l \mathbb{E} \bigl[    \hat{\mathbf{u}}^n_l \mathbf{1}_{\{t\geq \tau_0(Z^n)\}}\bigr]  ,\quad 	\Phi(\mu, v , t) := \sum_{l=1}^k v_l \mathcal{L}(\mu)_l(t),
		\end{equation}
	for $n\geq 1$, where $\hat{\mathbf{u}}^n \rightarrow \hat{\mathbf{u}}$ almost surely in $\mathbb{R}$ and $Z^n\rightarrow Z$ almost surely in $(D_\mathbb{R},\text{M1})$. Additionally, \eqref{eq:type_vectors_dot-product} ensures that $\mathbf{v}^n\cdot \hat{\mathbf{u}}^n \geq0$, and we have that $\mathbf{v}^n,\hat{\mathbf{u}}^n$ are bounded uniformly in $n\geq 1$. In particular, each $t\mapsto \Phi(\mathbf{Q}^n(\omega), \mathbf{v}^n , t)$ is of finite variation with total variation bounded by a constant uniformly in $n\geq 1$ on any compact time interval, which we use at the end of the proof. 

	Next, by the constructions in Section \ref{subsect:characterise_limit}, it follows from Proposition \ref{prop:hitting_time_cont} that
	\[
	\mathbb{E}[\mathbf{P}^\star(\tau_0\;\text{is M1 continuous at}\;Z)]=1,
	\]
	and $\mathbf{Q}^\star$ has the same law as $\mathbf{P}^\star$ by assumption, so we may further restrict $E$ such that taking an arbitrary $\omega \in E$ in \eqref{eq:marginal_feedback_fct} implies $\tau_0(Z^n)\rightarrow \tau_0(Z)$ almost surely, while retaining $\mathbb{P}_1(E)=1$ (recall the laws of $Z^n, Z$ are fixed by the realisations $\mathbf{Q}^n(\omega), \mathbf{Q}^\star(\omega)$). Consequently, we have $\mathbf{1}_{\{t\geq \tau_0(Z^n)\}}\rightarrow\mathbf{1}_{\{t\geq \tau_0(Z)\}}$ on an event of full probability minus the event $\{\tau_0(Z)=t\}$, on the common probability space where these processes are defined (given by Skorokhod's representation). Now, fix an arbitrary $\omega\in E$ and let $t$ be an arbitrary continuity point of $s\mapsto \mathcal{L}(\mathbf{Q}^\star(\omega))_l(s)$ for each $l=1,\ldots,n$. Then \eqref{eq:loss_mappings} and dominated convergence (along with right-continuity of each $\mathcal{L}(\mathbf{Q}^\star(\omega))_l$) implies
	\[
	0= \Phi(\mathbf{Q}^\star(\omega), \mathbf{v} , t) - \Phi(\mathbf{Q}^\star(\omega), \mathbf{v} , t-) =  \mathbb{E} \bigl[   v\cdot \hat{\mathbf{u}}_l \mathbf{1}_{\{ \tau_0(Z)=t\}}\bigr]\bigr|_{v=\mathbf{v}}
	\]
	Fixing a realisation $v$ of $\mathbf{v}$, if $v\cdot \hat{\mathbf{u}}$ is non-zero (hence strictly positive) on an event of non-negligible probability (for $\mathbb{P}_1$), we must therefore have $\tau_0(Z)\neq t$ on that event (up to a $\mathbb{P}_1$-null set). Therefore, we can conclude from dominated convergence that
	\begin{equation}\label{eq:conv_feedback_cont_point}
	\Phi(\mathbf{Q}^n(\omega), \mathbf{v}^n , t) =  \sum_{l=1}^k \mathbf{v}^n_l \mathbb{E} \bigl[    \hat{\mathbf{u}}^n_l \mathbf{1}_{\{t\geq \tau_0(Z^n)\}}\bigr] \rightarrow \sum_{l=1}^k \mathbf{v}^n_l \mathbb{E} \bigl[    \hat{\mathbf{u}}_l \mathbf{1}_{\{t\geq \tau_0(Z)\}}\bigr] = \Phi(\mathbf{Q}^\star(\omega), \mathbf{v} , t) 
	\end{equation}
	for our arbitrary $\omega \in E$, $\mathbb{P}_2$-almost surely, for any common continuity point of $s\mapsto \mathcal{L}(\mathbf{Q}^\star(\omega))_l(s)$, for $l=1,\ldots,n$. Using integration by parts for Riemann-Stieljtes integrals, we get
	\begin{align}\label{eq:IBP_Stieltjes}
	\int_0^t g_{\mathbf{u}^n,\mathbf{v}^n}(s) d	\Phi(\mathbf{Q}^n(\omega), \mathbf{v}^n , s) &=  	\int_0^t \bigl(g_{\mathbf{u}^n,\mathbf{v}^n}(s) - g_{\mathbf{u},\mathbf{v}}(s)\bigr) d	\Phi(\mathbf{Q}^n(\omega), \mathbf{v}^n , s)\nonumber \\ 
	&+  g_{\mathbf{u},\mathbf{v}}(t)	\Phi(\mathbf{Q}^n(\omega), \mathbf{v}^n , t)- \int_0^t 	\Phi(\mathbf{Q}^n(\omega), \mathbf{v}^n , s) d g_{\mathbf{u},\mathbf{v}}(s).
	\end{align}
	By Assumption \ref{assump:paramters_emperical}, each function  $t\mapsto g_{u,v}(t)$ is continuous and non-decreasing. In particular, it is a standard fact of real analysis that the pointwise convergence $g_{\mathbf{u}^n,\mathbf{v}^n}(s) \rightarrow  g_{\mathbf{u},\mathbf{v}}(s) $ is in fact uniform over $s\in[0,t]$ (alternatively, in the spirit of the present paper, one gets M1 relative compactness from the monotonicity, and the a priori pointwise convergence to a continuous limit then yields the uniform convergence to that limit). Since the total variation of $s\mapsto \Phi(\mathbf{Q}^n(\omega), \mathbf{v}^n , s)$ on $[0,t]$ is bounded uniformly in $n\geq 1$, the first term on the right-hand side of \eqref{eq:IBP_Stieltjes} vanishes as $n\rightarrow 0$. By \eqref{eq:conv_feedback_cont_point}, the second term on the right-hand side tends to $g_{\mathbf{u},\mathbf{v}}(t)\Phi(\mathbf{Q}^\star(\omega), \mathbf{v} , t)$ whenever $t$ is a continuity point. Finally, $dg_{\mathbf{u},\mathbf{v}}(s)$ induces a well-defined Lebesgue--Stieljtes measures, and we get pointwise convergence of the integrands on a dense set of times $s\in[0,t]$ by \eqref{eq:conv_feedback_cont_point}, so dominated convergence and another integration by parts completes the proof.
\end{proof}

We shall use the previous lemma several times. A first application is the following convergence result for the total feedback felt by each particle. This result is important for practical implementations of the model, showing that, after fixing a particular type of bank from the true financial system, the actual feedback from defaults felt by this bank can be approximated via only the $k$ feedback processes for the mean-field model.

\begin{prop}[Total feedback for a tagged institution]\label{prop:conv_total_feedback}
Let Assumption \ref{assump:paramters_emperical} be satisfied, and fix any given pair of indexing vectors $(u^i,v^i)$. Let the limit point $(\mathbf{P}^\star,B_0)$ be achieved along a subsequence still indexed by $n\geq1$. Then, as $n\rightarrow\infty$, the total feedback
\begin{equation}\label{eq:total_feedback|_process}
t\mapsto \sum_{l=1}^k {v^i_l}\int_0^tg_i(s)d\mathcal{L}_{l,i}^{n}(s),
\end{equation}
 felt by the $i$'th particle, converges in law at the process level on $D_\mathbb{R}[0,T_0]$, for any $T_0\in \mathbb{T}_\star$, to
\begin{equation}\label{eq:total_feedback|_process_limit}
t\mapsto \sum_{l=1}^k {v^i_l}\int_0^tg_{u^i,v^i}(s)d\mathcal{L}^\star_{l}(s),
\end{equation}
where $g_{u,v}(s)\!:=\!g(u,v,\mathbb{E}[\mathbf{u}],\mathbb{E}[\mathbf{v}],s)$ and $\mathcal{L}^\star_{1},\ldots,\mathcal{L}^\star_{k}$ are defined by \eqref{eq:limit_feedback_processes} on $(\Omega_0,\mathcal{B}(\Omega_0),\mathbb{P}^\star_0)$.
\end{prop}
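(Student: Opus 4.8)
The plan is to prove the proposition in three moves: first, two deterministic reductions that replace the particle-specific objects $\mathcal{L}^{n}_{l,i}$ and $g_i$ by the empirical-measure loss $\mathcal{L}(\mathbf{P}^n)_l$ from \eqref{eq:loss_mappings} and the limiting coefficient $g_{u^i,v^i}$; second, an application of Skorokhod's representation theorem together with (an adaptation of) Lemma \ref{lem:first_conv_result}, which yields convergence of the one-dimensional time marginals at every $t\in\mathbb{T}_\star$; and third, an upgrade of this pointwise convergence to process-level convergence in the M1 topology, exploiting monotonicity of the feedback.

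For the reductions, I would first note that $\mathcal{L}^{n}_{l,i}(t)=\mathcal{L}(\mathbf{P}^n)_l(t)-\tfrac1n u^i_l\mathbf{1}_{t\ge\tau_i}$, so the two differ by a process of total variation $O(1/n)$; since $g_i$ is bounded on $[0,T]$ by continuity and compactness (Assumption \ref{assump:paramters_emperical}), this alters \eqref{eq:total_feedback|_process} only by $O(1/n)$ uniformly in $t$. Second, writing $g_i(s)=g(u^i,v^i,\bar u^i,\bar v^i,s)$, the weak convergence \eqref{eq:emp_meas_network} together with the uniform bound $|u^j|+|v^j|\le C$ (supplying the uniform integrability needed to pass to first moments) gives $\bar u^i\to\mathbb{E}[\mathbf{u}]$ and $\bar v^i\to\mathbb{E}[\mathbf{v}]$ once the single term $j=i$, of size $O(1/n)$, is discarded; then uniform continuity of $g$ on the relevant compact set yields $\sup_{s\le T}|g_i(s)-g_{u^i,v^i}(s)|\to0$, and since each $\mathcal{L}(\mathbf{P}^n)_l$ has total variation at most $C$, the resulting change in the feedback is again $o(1)$ uniformly in $t$. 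As the M1 distance is dominated by the uniform distance, it therefore suffices to prove that $t\mapsto\sum_l v^i_l\int_0^t g_{u^i,v^i}(s)\,d\mathcal{L}(\mathbf{P}^{n_m})_l(s)$ converges in law, on $(D_\mathbb{R}[0,T_0],\mathrm{M1})$, to $t\mapsto\sum_l v^i_l\int_0^t g_{u^i,v^i}(s)\,d\mathcal{L}^\star_l(s)$.

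For the remaining two moves, along the subsequence $\{n_m\}$ I would apply Skorokhod's representation theorem to $(\mathbf{P}^{n_m},B_0)\to(\mathbf{P}^\star,B_0)$, producing a.s.-convergent copies $(\mathbf{Q}^{n_m},B^{n_m})\to(\mathbf{Q}^\star,B^\star)$ with the same laws. Running the argument of Lemma \ref{lem:first_conv_result} with the deterministic indexing vectors $(u^i,v^i)$ in place of the random sequence there---legitimate because $v^i\cdot u^j\ge0$ for all $j$ by Assumption \ref{assump:paramters_emperical}, which is precisely the sign condition the argument requires---I obtain, for $\omega$ in a full-measure event and for every $t\in\mathbb{T}_\star$, that $\sum_l v^i_l\int_0^t g_{u^i,v^i}\,d\mathcal{L}(\mathbf{Q}^{n_m}(\omega))_l\to\sum_l v^i_l\int_0^t g_{u^i,v^i}\,d\mathcal{L}(\mathbf{Q}^\star(\omega))_l$; moreover, since $\mathbf{Q}^\star$ has the law of $\mathbf{P}^\star$, the limiting process $\sum_l v^i_l\int_0^\cdot g_{u^i,v^i}\,d\mathcal{L}(\mathbf{Q}^\star)_l$ has the same law as $\sum_l v^i_l\int_0^\cdot g_{u^i,v^i}\,d\mathcal{L}^\star_l$, by disintegrating over $(u,v)$ and using the construction \eqref{eq:1st_Markov_kernel}--\eqref{eq:limit_feedback_processes}. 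To conclude, I would observe that, by $g\ge0$ and $v^i\cdot u^j\ge0$, both the prelimit processes and the limit are non-decreasing c\`adl\`ag functions on $[0,T_0]$, that $\mathbb{T}_\star$ is dense in $[0,T_0]$ (its complement being countable) and contains both $T_0$ (by hypothesis) and $0$ (trivially), and that for non-decreasing functions pointwise convergence on a dense set together with convergence at the two endpoints forces M1 convergence (e.g.\ via Hausdorff convergence of completed graphs, \cite[Section 12.5]{whitt_2002}). Hence the processes converge a.s.\ in $(D_\mathbb{R}[0,T_0],\mathrm{M1})$, which gives convergence in law, and combining with the reductions above finishes the proof.

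I expect the only genuinely delicate point to be this last step, the passage from marginal to process-level convergence: the finite cascades produce jumps at times and of sizes bearing no relation to the jumps of the mean-field limit, so J1 convergence simply fails, and it is the monotonicity of the feedback---itself a consequence of $v^i\cdot u^j\ge0$---that lets one absorb this mismatch into the weaker M1 topology. Everything else is either a routine uniform estimate or a direct transcription of the argument already carried out in Lemma \ref{lem:first_conv_result}.
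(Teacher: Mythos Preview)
Your proof is correct and follows essentially the same structure as the paper's, with one genuine difference in the final step. Both proofs first reduce $\mathcal{L}^{n}_{l,i}$ to $\mathcal{L}(\mathbf{P}^n)_l$ via the $O(1/n)$ correction, then invoke Skorokhod's representation together with the argument of Lemma~\ref{lem:first_conv_result} to obtain marginal convergence at all $t\in\mathbb{T}_\star$; your explicit handling of the passage $g_i\to g_{u^i,v^i}$ via $\bar u^i\to\mathbb{E}[\mathbf{u}]$, $\bar v^i\to\mathbb{E}[\mathbf{v}]$ and uniform continuity of $g$ is in fact more careful than the paper, which leaves this step implicit. Where you and the paper diverge is in the upgrade from marginal to process-level convergence. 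The paper first establishes \emph{tightness} of the feedback processes in $(D_\mathbb{R}[0,T_0],\mathrm{M1})$ by re-running the oscillation estimates of Proposition~\ref{prop:tightness} (using monotonicity together with Lemma~\ref{lem:contagion_at_0} for the boundary control at $t=0$), and then concludes via the standard route tightness $+$ convergence of finite-dimensional distributions $\Rightarrow$ convergence in law. You instead exploit directly that for nondecreasing c\`adl\`ag paths, pointwise convergence on a dense set containing the endpoints already forces M1 convergence (\cite[Corollary~12.5.1]{whitt_2002}), obtaining almost-sure convergence of the Skorokhod copies without a separate tightness argument. Your route is more economical here and makes transparent that monotonicity is doing all the work; the paper's route is the template one would reach for if monotonicity were not available.
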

\begin{proof} 
	First of all, Assumption \ref{assump:paramters_emperical} implies that the total feedback processes \eqref{eq:total_feedback|_process} are non-decreasing on $[0,T]$ for each $i=1,\ldots,n$ and $n\geq1$. Based on Lemma \ref{lem:contagion_at_0}, it is thus straightforward to verify the conditions \eqref{eq:tight1}--\eqref{eq:tight4} for these processes in place of the different particle trajectories $t\mapsto X^i(t)$, and so the arguments of Proposition \ref{prop:tightness} yield tightness of \eqref{eq:total_feedback|_process} in $D_\mathbb{R}[0,T_0]$, for any $T_0\in \mathbb{T}_\star$,  under the M1 topology. Note that we can write
	\[
	\sum_{l=1}^k {v^i_l}\int_0^tg_i(s)d\mathcal{L}_{l,i}^{n}(s)=\sum_{l=1}^k {v^i_l}\int_0^tg_i(s)d\mathcal{L}(\mathbf{P}^{n})_l(s)- \frac{1}{n} {v^i}\cdot u^ig_i(\tau_i)\mathbf{1}_{t\geq \tau_i},
	\]
	where each $\mathcal{L}(\mu)$ is defined as in \eqref{eq:loss_mappings}. Sending $n\rightarrow \infty$, the second term on the right-hand side vanishes uniformly in $i\leq n $ and $t\in[0,T]$, by the assumptions on $u^i$, $v^i$, and $g_i$ in Assumption \ref{assump:paramters_emperical}, so we only need to consider the convergence of the first term. To this end, we can see from the definition of $\mathbb{P}_{u,v}^{\mu,w}$ in Section \ref{subsect:characterise_limit} that, as stochastic processes,
	\begin{align}\label{eq:feedback_general_formulation}
	\mathcal{L}^\star_{l}(t) = \int_{\mathbb{R}^k\times\mathbb{R}^k} u_l \mathbb{P}_{u,v}^{\mathbf{P}^\star\!\!, B_0}\bigl( t\geq \tau_0(Z)\bigr)d\varpi(u,v) = \int_{\mathbb{R}^k\times{\mathbb{R}^k}\times D_\mathbb{R}}u_l\mathbf{1}_{\{\eta\, :\, t\geq  \tau_0(\eta) \}}d\mathbf{P}^\star(u,v,\eta)
	\end{align}
	almost surely for the probability space $(\Omega_0,\mathcal{B}(\Omega_0),\mathbb{P}^\star_0)$ from Section \ref{subsect:characterise_limit}. That is, we in fact have $\mathcal{L}^\star_{l}=\mathcal{L}(\mathbf{P}^\star)_l$ for each $l=1,\ldots,k$. From the definition of $\mathbb{T}_\star$ in \eqref{eq:cont_times} and the relation between $\mathbb{P}^\star_{u,v}$ and $\mathbb{P}_0^\star$ in \eqref{eq:P_uv_defn}, we can furthermore see that
	\[
	\mathbb{T}_\star \subseteq \bigl\{t\in[0,T_0] : \mathbb{P}^\star_0 \bigl( \mathcal{L}(\mathbf{P}^\star)_l(t) = \mathcal{L}(\mathbf{P}^\star)_l(t-),\;l=1,\ldots,k    \bigl)=1 \bigr\}.
	\]
	Consequently, it holds for $\mathbb{P}^\star_0$-almost all $\omega\in \Omega_0$ that every $t\in\mathbb{T}_\star$ is a common continuity point of the paths $s\mapsto \mathcal{L}^\star_{l}(\omega)(s)=\mathcal{L}(\mathbf{P}^\star(\omega))_l(s)$, for $l=1,\ldots,k$. In turn, by first applying Skorokhod's representation theorem to the sequence $(\mathbf{P}^n,B_0)$, converging in law to $(\mathbf{P}^\star,B_0)$, we are now in a position to apply Lemma \ref{lem:first_conv_result} for every $t\in \mathbb{T}_\star$ (noting that the proof of Lemma \ref{lem:first_conv_result} also holds when we take $(\mathbf{u}^n,\mathbf{v}^n)=(u^i,v^i)$, since \eqref{eq:type_vectors_dot-product} is then still satisfied). This gives us that the finite dimensional distributions of any limit point of \eqref{eq:total_feedback|_process} agrees with those of \eqref{eq:total_feedback|_process_limit} for all finite sets of times $t_1,\ldots,t_m \in \mathbb{T}_\star$. Since we also have M1-tightness in $D_\mathbb{R}[0,T_0]$, the conclusion follows (recalling that the Borel sigma algebra on $D_\mathbb{R}[0,T_0]$ for the M1 topology is generated by the finite dimensional projections).
\end{proof}

\subsection{Martingale properties with respect to the limiting Markov kernel}

The previous subsections guide us towards certain martingale properties with respect to the Markov kernel $(\mathbb{P}^\star_{u,v})_{(u,v)\in\mathbb{R}^k \times \mathbb{R}^k}$. Using these, we will be able to identify the limiting mean-field problem.

\begin{prop}[Limiting martingale properties] \label{prop:martingale_props}
	Let Assumption \ref{assump:paramters_emperical} be satisfied. For any given $(u,v)\in\mathbb{R}^k\times \mathbb{R}^k$, we consider the probability spaces $(\Omega_\star, \mathcal{B}(\Omega_\star),\mathbb{P}_{u,v}^\star)$ with $\mathbb{P}_{u,v}^\star$ given by \eqref{eq:P_uv_defn}. We then define a c\`adl\`ag stochastic process
	$M_{u,v}^\star: \Omega_\star \rightarrow D_\mathbb{R}$ by
	\begin{equation}\label{Z_uv}
	M^\star_{u,v}(t) := Z(t) - Z(0) - \int_0^t b_{u,v}(s)ds - F\Bigl( \sum_{l=1}^k v_l \int_0^t g_{u,v}(s)d \mathcal{L}_l^\star(s) \Bigr),
	\end{equation}
	where each $\mathcal{L}_l^\star$ is defined in \eqref{eq:limit_feedback_processes}. We write $b_{u,v}(t)=b(u,v,t)$ and $\sigma_{u,v}(t)=\sigma(u,v,t)$ as well as $g_{u,v}(t):=g(u,v,\mathbb{E}[\mathbf{u}],\mathbb{E}[\mathbf{v}],t)$. Under $\mathbb{P}_{u,v}^\star$, for $\varpi$-almost every $(u,v)\in\mathbb{R}^k\times\mathbb{R}^k$, the process $M_{u,v}^\star$ is a continuous martingale for its natural filtration on $[0,T]$ with
	\begin{equation}\label{eq:quad_var}
	\langle M_{u,v}^\star \rangle_t = \int_0^t\sigma_{u,v}^2(s)ds \quad \text{and}\quad \langle M_{u,v}^\star, B_0 \rangle_t = \int_0^t \rho \sigma_{u,v}(s)ds.
	\end{equation}
	\end{prop}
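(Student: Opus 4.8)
The plan is to run the classical limiting martingale problem argument adapted to our singular, non-exchangeable setting: establish the martingale identities at the level of the finite particle system, rewrite them as integrals against the empirical measures $\mathbf{P}^n$, and then pass to the limit using the tightness from Proposition~\ref{prop:tightness} together with the process-level feedback convergence of Proposition~\ref{prop:conv_total_feedback} (which rests on Lemma~\ref{lem:first_conv_result}). As a first step, integrating the dynamics \eqref{particle_sys} and using $F(0)=0$, one sees that for each $i\le n$ the process
\[
M^n_i(t):=X_i(t)-X_i(0)-\int_0^t b_i(s)ds-F^n_i(t)=\int_0^t\sigma_i(s)dW_i(s)
\]
is a continuous martingale for the augmented filtration $(\mathcal{F}^n_t)$ generated by $B_0,B_1,\dots,B_n$ and the initial conditions; since $\sigma_i$ is deterministic and bounded by Assumption~\ref{assump:paramters_emperical}, and $\langle W_i,B_0\rangle_t=\rho t$, the processes $(M^n_i)^2-\int_0^{\cdot}\sigma_i^2(s)ds$ and $M^n_i B_0-\int_0^{\cdot}\rho\sigma_i(s)ds$ are also $(\mathcal{F}^n_t)$-martingales. (Here one uses that the strong solution $X_i|_{[0,s]}$, built via the cascade condition, is $\mathcal{F}^n_s$-measurable.)

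Next, fix $0\le s<t$ with $s,t\in\mathbb{T}_\star$, a bounded continuous $f:\mathbb{R}^k\times\mathbb{R}^k\to\mathbb{R}$, and a bounded M1-continuous functional $\Phi$ on $\mathcal{P}_\varpi\times C_\mathbb{R}\times D_\mathbb{R}$ depending on its arguments only through their restrictions to $[0,s]$. Since $\mathbf{P}^n|_{[0,s]}$, $B_0|_{[0,s]}$ and $X_i|_{[0,s]}$ are $\mathcal{F}^n_s$-measurable, conditioning on $\mathcal{F}^n_s$ gives
\[
\mathbb{E}\Bigl[\tfrac1n\sum_{i=1}^n f(u^i,v^i)\,\Phi(\mathbf{P}^n,B_0,X_i)\,\bigl(H^n_i(t)-H^n_i(s)\bigr)\Bigr]=0,
\]
where $H^n_i$ is, in turn, $M^n_i$, then $(M^n_i)^2-\int_0^{\cdot}\sigma_i^2$, then $M^n_i B_0-\int_0^{\cdot}\rho\sigma_i$; the uniform moment bounds of Assumption~\ref{assump:paramters_emperical} and Burkholder--Davis--Gundy make all the products uniformly integrable. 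Rewriting the left-hand side as an integral against $\mathbf{P}^n$ and invoking Skorokhod's representation theorem for the subsequential convergence $(\mathbf{P}^n,B_0)\to(\mathbf{P}^\star,B_0)$, we pass to the limit term by term. The one nontrivial ingredient is the feedback term inside $M^n_i$: by Proposition~\ref{prop:conv_total_feedback} (discarding the negligible self-interaction correction) and the Lipschitz continuity of $F$, it converges at the process level on $[0,T_0]$, $T_0\in\mathbb{T}_\star$, to $F\bigl(\sum_l v_l\int_0^{\cdot}g_{u,v}(s)d\mathcal{L}^\star_l(s)\bigr)$, while $X_i(t)\to Z(t)$ and $\int_0^t b_i\to\int_0^t b_{u,v}$ are immediate; because $s,t\in\mathbb{T}_\star$ the relevant evaluation maps are $\mathbf{P}^\star$-a.e.\ M1-continuous, so the bounded convergence theorem for weakly convergent measures yields in the limit
\[
\mathbb{E}^{\mathbb{P}^\star_0}\Bigl[\int f(u,v)\,\Phi(\mathbf{P}^\star,B_0,\eta)\,\bigl(H_{u,v}(t;\eta)-H_{u,v}(s;\eta)\bigr)\,d\mathbf{P}^\star(u,v,\eta)\Bigr]=0,
\]
with $H_{u,v}$ equal to $Z^\star_{u,v}$, then $(Z^\star_{u,v})^2-\int_0^{\cdot}\sigma^2_{u,v}$, then $Z^\star_{u,v}B_0-\int_0^{\cdot}\rho\sigma_{u,v}$. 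Unpacking \eqref{eq:P_uv_defn}, this says $\int f(u,v)\,\mathbb{E}^{\mathbb{P}^\star_{u,v}}[\Phi(H_{u,v}(t)-H_{u,v}(s))]\,d\varpi(u,v)=0$ for all such $f$.

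Running over a countable M1-dense family of functionals $\Phi$ and over rational pairs $s<t$ in $\mathbb{T}_\star$, and discarding a single $\varpi$-null set, we conclude that for $\varpi$-a.e.\ $(u,v)$, under $\mathbb{P}^\star_{u,v}$, the process $Z^\star_{u,v}$ is a c\`adl\`ag martingale for the filtration generated by $(\mathbf{P}^\star,B_0,Z)$, with $(Z^\star_{u,v})^2-\int_0^{\cdot}\sigma^2_{u,v}$ and $Z^\star_{u,v}B_0-\int_0^{\cdot}\rho\sigma_{u,v}$ also martingales; one extends these from $\mathbb{T}_\star$ to all of $[0,T]$ by right-continuity, since $\mathbb{T}_\star^{\mathrm c}$ is countable. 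The second identity forces $\langle Z^\star_{u,v}\rangle_t=\int_0^t\sigma^2_{u,v}(s)ds$, which is continuous, so the c\`adl\`ag martingale $Z^\star_{u,v}$ has no jumps, i.e.\ it is continuous; the third identity then gives $\langle Z^\star_{u,v},B_0\rangle_t=\int_0^t\rho\sigma_{u,v}(s)ds$ (applying the same limiting argument to $B_0^2-t$ also records that $B_0$ stays a Brownian motion for the joint filtration, so the cross-bracket is meaningful), and restricting to the natural filtration of $Z^\star_{u,v}$ if one wishes only weakens the statement. The main obstacle is precisely the passage to the limit: the integrand depends on the empirical measure \emph{twice}---as the integrating measure and, through $\mathcal{L}^\star_l=\mathcal{L}(\mathbf{P}^\star)_l$, inside the integrand---and the M1 topology renders the maps $\eta\mapsto\eta(t)$ and $\mu\mapsto\mathcal{L}(\mu)_l(t)$ discontinuous in general. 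Handling this is exactly what $\mathbb{T}_\star$ and the process-level feedback convergence of Proposition~\ref{prop:conv_total_feedback} (built on the monotonicity argument of Lemma~\ref{lem:first_conv_result}, adapted from \cite[Lemma~3.13]{LS18a}) are for; securing joint M1-continuity of the test functionals at $\mathbf{P}^\star$-a.e.\ path and the uniform integrability needed to push expectations through the weak limit is the technical heart of the proof.
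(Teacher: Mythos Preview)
Your overall strategy matches the paper's: pass the finite-particle martingale identities to the limit via Skorokhod's representation, using Proposition~\ref{prop:conv_total_feedback} and Lemma~\ref{lem:first_conv_result} to handle the feedback term, test against a countable family at times in $\mathbb{T}_\star$, and discard a single $\varpi$-null set. The paper uses products of bounded continuous point evaluations $\prod_i\phi_{j_i}(Z_{u,v}(q_i))$ as test functions rather than general M1-continuous path functionals, but this is a cosmetic difference.

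There is, however, a genuine gap in your continuity argument. You claim that because $\langle Z^\star_{u,v}\rangle_t=\int_0^t\sigma_{u,v}^2(s)\,ds$ is continuous, the c\`adl\`ag martingale $Z^\star_{u,v}$ can have no jumps. This is false: the compensated Poisson process $M_t=N_t-\lambda t$ is a c\`adl\`ag martingale with $\langle M\rangle_t=\lambda t$ continuous (and deterministic), yet it jumps. What your limiting argument identifies is the \emph{predictable} quadratic variation; it is the \emph{optional} quadratic variation $[Z^\star_{u,v}]$ whose continuity would force $\Delta Z^\star_{u,v}\equiv 0$, and the martingale identity for $(Z^\star_{u,v})^2-\int_0^{\cdot}\sigma_{u,v}^2$ says nothing about $[Z^\star_{u,v}]$. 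The paper closes this gap differently: it passes a moment increment bound of the form $\mathbb{E}|M^n_i(t)-M^n_i(s)|^4\le C|t-s|^2$ (immediate from BDG and the uniform bound on $\sigma_i$) to the limit along $\mathbb{T}_\star$ to verify Kolmogorov's continuity criterion for $Z^\star_{u,v}$ under $\mathbb{P}^\star_{u,v}$, and then works with a continuous modification. You should do the same.
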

\begin{proof} 
	 We begin by fixing a countable family $\{ \phi_j \}_{j=1}^\infty$ of bounded continuous functions $\phi_j:\mathbb{R} \rightarrow \mathbb{R}$ such that $M^\star_{u,v}$ is a martingale under $\mathbb{P}^\star_{u,v}$ provided we have the identity
	\begin{equation}\label{eq:martingale_prop}
	\mathbb{E}_{\mathbb{P}^\star_{u,v}} \Bigl[ M^\star_{u,v}(t^\prime) \prod_{i=1}^m\phi_{j_i}(M^\star_{u,v}(t_i))  \Bigr]= 	\mathbb{E}_{\mathbb{P}^\star_{u,v}} \Bigl[ M^\star_{u,v}(t) \prod_{i=1}^m\phi_{j_i}(M^\star_{u,v}(t_i))  \Bigr],
	\end{equation}
	for any choices of $\phi_{j_1},\ldots, \phi_{j_m}$ in $\{ \phi_j \}_{j=1}^\infty$ and $t_1,\ldots,t_m\leq t < t^\prime$ in $[0,T]$, where $m\geq 1$ is arbitrary. Since we also have right-continuity of the paths, to establish \eqref{eq:martingale_prop} it suffices to consider functionals
	\begin{equation}\label{eq:theta_functional}
	\Phi^{j_1,\ldots,j_m}_{q,q^\prime;q_1,\ldots,q_m}\!(\eta):=\bigl( (\eta)(q^\prime))-(\eta)(q)\bigr)\prod_{i=1}^m\phi_{j_i}(\eta(q_i)), \quad \eta \in D_\mathbb{R},
	\end{equation}
	for all rationals $q,q^\prime,q_1,\ldots,q_m \in \mathbb{T}_\star \cap \mathbb{Q}$ with $q_1,\ldots,q_m\leq q <q^\prime$, where $\mathbb{T}_\star $ is given by \eqref{eq:cont_times}, and show that, for any such functional, we have
	\begin{equation}\label{eq:limting_martingale_functional}
	\int_{\mathbb{R}^k\times\mathbb{R}^k}f(u,v)\mathbb{E}_{\mathbb{P}^\star_{u,v}}\bigl[ \Phi^{j_1,\ldots,j_m}_{q,q^\prime;q_1,\ldots,q_m}\!(M^\star_{u,v})\bigr]d\varpi(u,v) =0,
	\end{equation}
	for all bounded continuous functions $f: \mathbb{R}^k \times \mathbb{R}^k\rightarrow \mathbb{R}$. Using the notation of \eqref{eq:loss_mappings}, consider also the functionals
	\begin{equation}\label{eq:2nd_functional}
\hat{\Phi}^{j_1,\ldots,j_m}_{q,q^\prime;q_1,\ldots,q_m}\!(u,v ,\eta, \mu):= \Phi^{j_1,\ldots,j_m}_{q,q^\prime;q_1,\ldots,q_m}\!\Bigl( \eta_\cdot - \eta_0 -\int_0^\cdot\!b_{u,v}(s)ds - F\Bigl( \sum_{l=1}^kv_l \int_0^\cdot \!g_{u,v}(s)d\mathcal{L}(\mu)_l(s) \!\Bigr)\!  \Bigr).
	\end{equation}
By \eqref{eq:defn_big_P_star}, \eqref{eq:disintegration} and \eqref{eq:P_uv_defn}, we can then confirm that the left-hand side of \eqref{eq:limting_martingale_functional} coincides with
\begin{equation}\label{eq:rewritten_expectation}
\mathbb{E}_{\mathbb{P}_0^\star} \Bigl[  \int_{\mathbb{R}^k\times \mathbb{R}^k \times D_\mathbb{R}} f(u,v)\hat{\Phi}^{j_1,\ldots,j_m}_{q,q^\prime;q_1,\ldots,q_m}\!(u,v,\eta, \mathbf{P}^\star  )d\mathbf{P}^\star(u,v,\eta)     \Bigr].
\end{equation}
	Applying Skorokhod's representation theorem, we can express this as
	\begin{equation}\label{eq:1st_main_functional}
	\mathbb{E} \Bigl[  \int\!\! f(u,v)\hat{\Phi}^{j_1,\ldots,j_m}_{q,q^\prime;q_1,\ldots,q_m}\!(u,v,\eta, \mathbf{Q}^\star  )d\mathbf{Q}^\star(u,v,\eta)     \Bigr],
	\end{equation}
	and we can then apply Skorokhod's representation theorem again, for any given realisation of $\mathbf{Q}^\star$, to be able to write
	\begin{equation}\label{eq:1st_main_functional_skorokhod_rep}
	\int\!\! f(u,v)\hat{\Phi}^{j_1,\ldots,j_m}_{q,q^\prime;q_1,\ldots,q_m}\!(u,v,\eta, \mathbf{Q}^\star  )d\mathbf{Q}^\star(u,v,\eta)=\mathbb{E} \bigl[   f(\mathbf{u},\mathbf{v})\hat{\Phi}^{j_1,\ldots,j_m}_{q,q^\prime;q_1,\ldots,q_m}\!(\mathbf{u},\mathbf{v},Z^\star, \mathbf{Q}^\star  ) \bigr]
	\end{equation}
	where there is a sequence $(\mathbf{u}^n,\mathbf{v}^n,Z^n) \rightarrow (\mathbf{u},\mathbf{v},Z^\star)$ almost surely in $\mathbb{R}^k\times \mathbb{R}^k \times D_\mathbb{R}$, for some common probability space $(\Omega_2,\mathcal{F}_2,\mathbb{P}_2)$, and a sequence $\mathbf{Q}^n\rightarrow \mathbf{Q}^\star$ almost surely in $\mathcal{P}(\mathbb{R}^k\times \mathbb{R}^k \times D_\mathbb{R})$, for some common probability space $(\Omega_1,\mathcal{F}_1,\mathbb{P}_1)$. Naturally, the point is that each $(\mathbf{u}^n,\mathbf{v}^n,Z^n) $ is distributed according to $\mathbf{Q}^n(\omega_1)$, for a given $\omega_1 \in 
	\Omega_1$, while $\mathbf{Q}^n$ as a random variable has the same law as $\mathbf{P}^n$. In particular, we note that the expectation on the right-hand side in the above expression does not act on $\mathbf{Q}^\star$, and that the sequence $(\mathbf{u}^n,\mathbf{v}^n,Z^n)\rightarrow(\mathbf{u},\mathbf{v},Z^\star)$ will be different for each $\omega_1\in \Omega_1$.
	
	It follows from the above that the sequences $(\mathbf{u}^n,\mathbf{v}^n)_{n\geq1}$ and $(\mathbf{Q}^n)_{n\geq 1}$ satisfy the assumptions of Lemma \ref{lem:first_conv_result}. Moreover, from the definition of $\mathbb{T}_\star$ and the distributional properties of  $\mathbf{Q}^\star$, we can see that
	\[
	\mathbb{E} \bigl[ \mathbf{Q}^\star \bigl( \{ \eta \in D_\mathbb{R}: \eta(t)=\eta(t-) \}  \bigr) \mathbf{1}_{\{ \mathcal{L}(\mathbf{Q}^\star)_l(t)=\mathcal{L}(\mathbf{Q}^\star)_l(t-), \;l=1,\ldots, k \}} \bigr] =1
	\]
	for all $t\in \mathbb{T}_\star$. Therefore, every $t \in \mathbb{T}_\star$ satisfies that, for $\mathbb{P}_1$-almost all $\omega_1\in \Omega_1$, $t$ is a common continuity point of the paths $\mathcal{L}(\mathbf{Q}(\omega_1))_l$, for $l=1,\ldots, k$, and it is $\mathbb{P}_2$-almost surely a continuity point of $Z^\star$ when the latter has the law $\mathbf{Q}^\star(\omega_1)\circ \pi_{D_{\mathbb{R}}}^{-1}$ on $D_\mathbb{R}$. In particular, whenever $t\in \mathbb{T}_\star$, we have $Z^n_t\rightarrow Z^\star_t$, $\mathbb{P}_2$-almost surely, for $\mathbb{P}_1$-almost every $\omega_1\in \Omega_1$. Moreover, it also follows that we can apply Lemma \ref{lem:first_conv_result} at any set of times $q,q^\prime,q_1,\ldots,q_m\in \mathbb{T}_\star$ for a $\mathbb{P}_1$-almost sure subset of $\Omega_1$. Thus, we can deduce from the definition of $\hat{\Phi}$ in \eqref{eq:2nd_functional} that
	\[
	\hat{\Phi}^{j_1,\ldots,j_m}_{q,q^\prime;q_1,\ldots,q_m}\!(\mathbf{u}^n,\mathbf{v}^n,Z^n, \mathbf{Q}^n(\omega_1)  )\rightarrow \hat{\Phi}^{j_1,\ldots,j_m}_{q,q^\prime;q_1,\ldots,q_m}\!(\mathbf{u},\mathbf{v},Z^\star, \mathbf{Q}^\star(\omega_1)  ), \quad \text{as}\quad n\rightarrow \infty,
	\]
	$\mathbb{P}_2$-almost surely, for $\mathbb{P}_1$-almost every $\omega_1\in \Omega_1$. In turn, dominated convergence gives
	\[
	\mathbb{E} \bigl[   f(\mathbf{u},\mathbf{v})\hat{\Phi}^{j_1,\ldots,j_m}_{q,q^\prime;q_1,\ldots,q_m}\!(\mathbf{u},\mathbf{v},Z^\star, \mathbf{Q}^\star ) \bigr]=\lim_{n\rightarrow \infty} \mathbb{E} \bigl[   f(\mathbf{u}^n,\mathbf{v}^n)\hat{\Phi}^{j_1,\ldots,j_m}_{q,q^\prime;q_1,\ldots,q_m}\!(\mathbf{u}^n,\mathbf{v}^n,Z^n, \mathbf{Q}^n  ) \bigr]
	\]
	$\mathbb{P}_1$-almost surely. Recalling \eqref{eq:rewritten_expectation}--\eqref{eq:1st_main_functional_skorokhod_rep}, and passing through our repeated use of Skorokhod's representation theorem, another application of dominated convergence then shows that the left-hand side of \eqref{eq:limting_martingale_functional} is in fact equal to the limit
	\[
 \lim_{n\rightarrow \infty} \mathbb{E} \Bigl[  \int_{\mathbb{R}^k\times \mathbb{R}^k \times D_\mathbb{R}} f(u,v)\hat{\Phi}^{j_1,\ldots,j_m}_{q,q^\prime;q_1,\ldots,q_m}\!(u,v,\eta, \mathbf{P}^n )d\mathbf{P}^n(u,v,\eta)     \Bigr].
	\]
	Looking at \eqref{eq:2nd_functional}, the definition of the empirical measures $\mathbf{P}^n$ immediately gives that this limit is zero, and so we have established \eqref{eq:limting_martingale_functional}. Using a similar approach, we can pass to the limit in   Kolmogorov's continuity criterion to verify that, for $\varpi$-almost every $(u,v)\in\mathbb{R}^k\times\mathbb{R}^k$, each martingale $M^\star_{u,v}$ has a continuous version under $\mathbb{P}^\star_{u,v}$, which completes the first part of the proof.

It remains to identify the quadratic variation of $M^\star_{u,v}$ as well as its quadratic co-variation with the limiting common factor $B_0$. To this end, we can argue exactly as we did above in order to conclude, from the limiting procedure, that
	\begin{align*}
&\int_{\mathbb{R}^k\times\mathbb{R}^k}f(u,v)\mathbb{E}_{\mathbb{P}^\star_{u,v}}\Bigl[ \Phi^{j_1,\ldots,j_m}_{q,q^\prime;q_1,\ldots,q_m}\!\Bigl((M_{u,v}^\star)^2- \int_0^\cdot \sigma_{u,v}^2(s)ds\Bigr) \Bigr]d\varpi(u,v) =0,
\end{align*}
and 
	\begin{align*}
&\int_{\mathbb{R}^k\times\mathbb{R}^k}f(u,v)\mathbb{E}_{\mathbb{P}^\star_{u,v}}\Bigl[ \Phi^{j_1,\ldots,j_m}_{q,q^\prime;q_1,\ldots,q_m}\!\Bigl(M^\star_{u,v}B_0 - \int_0^\cdot \rho\sigma_{u,v}(s)ds\Bigr) \Bigr]d\varpi(u,v) =0,
\end{align*}
for all bounded continuous functions $f:\mathbb{R}^k\times\mathbb{R}^k$ and any choice of the countably many functionals $\Phi^{j_1,\ldots,j_m}_{q,q^\prime;q_1,\ldots,q_m}$ defined in \eqref{eq:theta_functional}.
It then follows that, for $\varpi$-almost every $(u,v)\in\mathbb{R}^k\times\mathbb{R}^k$, the two processes $(M^\star_{u,v})^2- \int_0^\cdot \sigma_{u,v}^2(s)ds$ and $M^\star_{u,v}B_0 - \int_0^\cdot \rho\sigma_{u,v}(s)ds$ are also continuous martingales on $[0,T]$, under $\mathbb{P}^\star_{u,v}$, and so we obtain the final conclusion \eqref{eq:quad_var}.
\end{proof}

\subsection{Characterising the limit points: proof of Theorem \ref{thm:mf_conv}}\label{subsect:characterise_limit_proof}

Based on the work in the previous subsections, we can now establish the following result, which completes the proof of Theorem \ref{thm:mf_conv}.

\begin{prop}[Limiting McKean--Vlasov solutions]\label{prop:P_independent}
	Let $(u,v)\mapsto\mathbb{P}_{u,v}^\star \in\mathcal{P}(\Omega_\star)$ be the Markov kernel defined in \eqref{eq:P_uv_defn} from a given limit point $(\mathbf{P}^\star,B_0)$. Then, for $\varpi$-almost every $(u,v)\in \mathbb{R}^k\times\mathbb{R}^k$, there is a Brownian motion $B_{u,v}$ on $(\Omega_\star, \mathcal{B}(\Omega_\star),\mathbb{P}_{u,v}^\star)$ such that $(\mathbf{P}^\star,B_0)$, $B_{u,v}$, and $Z(0)$ are mutually independent on $(\Omega_\star, \mathcal{B}(\Omega_\star),\mathbb{P}_{u,v}^\star)$ with
	\[
		\mathcal{L}_l^\star(s)=\int_{\mathbb{R}^k\times\mathbb{R}^k} u_l \mathbb{P}^\star_{u,v}\bigl(t \geq \tau_0(Z) \, | \, B_0, \mathbf{P}^\star \bigr)d\varpi(u,v),
	\]
where $Z$ has dynamics
	\[
	dZ(t)= b_{u,v}(t)dt + \sigma_{u,v}(t)d(\rho B_0(t)+\sqrt{1-\rho^2}B_{u,v}(t)) - dF\Bigl( \sum_{l=1}^k v_l \int_0^t g_{u,v}(s)d \mathcal{L}_l^\star(s) \Bigr),
	\]
under $\mathbb{P}_{u,v}^\star$ and its starting point $Z(0)$ has density $V(\cdot|u,v)$ under $\mathbb{P}_{u,v}^\star$.
\end{prop}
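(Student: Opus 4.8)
The plan is to upgrade the martingale characterisation already obtained in Proposition~\ref{prop:martingale_props} to a filtration that also carries the pair $(\mathbf{P}^\star,B_0)$, and then to extract the driving Brownian motion by L\'evy's characterisation. Several ingredients are in hand: by Proposition~\ref{prop:martingale_props}, for $\varpi$-almost every $(u,v)$ the process $Z^\star_{u,v}$ of \eqref{Z_uv} is a continuous $\mathbb{P}^\star_{u,v}$-martingale with $\langle Z^\star_{u,v}\rangle_t=\int_0^t\sigma^2_{u,v}(s)\,ds$ and $\langle Z^\star_{u,v},B_0\rangle_t=\int_0^t\rho\sigma_{u,v}(s)\,ds$; the identity $\mathcal{L}^\star_l(t)=\int u_l\,\mathbb{P}^\star_{u,v}(\inf_{s\le t}Z_s\le0\mid B_0,\mathbf{P}^\star)\,d\varpi$ is precisely the conditional-probability identity established just above \eqref{eq:limit_feedback_processes}; and the conditional law of $Z(0)$ given $(u,v)$ under $\mathbb{P}^\star_{u,v}$ equals $\nu_0(\cdot\mid u,v)$, hence has the density of Assumption~\ref{X0_assump}, since \eqref{eq:emp_meas_network1} forces the time-$0$ marginal of $\mathbf{P}^\star$ to coincide almost surely with the deterministic measure $d\nu_0(x\mid u,v)\,d\varpi(u,v)$. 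It thus remains to produce $B_{u,v}$, to identify the dynamics of $Z$, and to establish that $B_{u,v}$ is independent of $(\mathbf{P}^\star,B_0)$ while $Z(0)$ is independent of $(\mathbf{P}^\star,B_0,B_{u,v})$.

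For the driving noise, set $N_{u,v}(t):=Z^\star_{u,v}(t)-\rho\int_0^t\sigma_{u,v}(s)\,dB_0(s)$ and consider the filtration $\mathcal{G}^{u,v}_t:=\bigcap_{\delta>0}\sigma(\mathbf{P}^\star,B_0,Z_r:r\le t+\delta)$ on $\Omega_\star$. The core step is to show that, for $\varpi$-a.e.\ $(u,v)$, $N_{u,v}$ is a $\mathcal{G}^{u,v}$-martingale under $\mathbb{P}^\star_{u,v}$. This is carried out just as in the proof of Proposition~\ref{prop:martingale_props}, but with test functionals that also depend on $(\mathbf{P}^\star,B_0)$: it suffices to check, for rationals $q_1,\dots,q_m\le q<q'$ in $\mathbb{T}_\star$ and bounded continuous $\Psi$ on $\mathcal{P}_\varpi\times C_\mathbb{R}$, $f$ on $\mathbb{R}^k\times\mathbb{R}^k$, $\phi_{j_i}$ on $\mathbb{R}$, that $\int f(u,v)\,\mathbb{E}_{\mathbb{P}^\star_{u,v}}\big[(N_{u,v}(q')-N_{u,v}(q))\,\Psi(\mathbf{P}^\star,B_0)\prod_{i}\phi_{j_i}(Z(q_i))\big]\,d\varpi=0$. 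Via Skorokhod's representation theorem together with Lemma~\ref{lem:first_conv_result} and Proposition~\ref{prop:conv_total_feedback}, and noting that $N_{u,v}$ corresponds at the particle level to $\sqrt{1-\rho^2}\int_0^\cdot\sigma_i(s)\,dB_i(s)$, this reduces to showing that $\frac1n\sum_{i=1}^n f(u^i,v^i)\,\mathbb{E}\big[(\int_q^{q'}\sigma_i\,dB_i)\,\Psi(\mathbf{P}^n,B_0)\prod_i\phi_{j_i}(X_i(q_i))\big]\to0$. The subtlety here, absent in Proposition~\ref{prop:martingale_props}, is that $\mathbf{P}^n$ depends on $B_i$, so each summand is not exactly zero. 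To handle this one replaces $\mathbf{P}^n$ by the leave-one-out empirical measure $\mathbf{P}^{n,(i)}$ built from the $n-1$ particles $j\neq i$ run under the system of \eqref{particle_sys}--\eqref{particle_cascade_cond1} with particle $i$ removed altogether, so that particle $i$'s defaults no longer feed the contagion of the others; then $\mathbf{P}^{n,(i)}$ is measurable with respect to $\sigma(B_0,\{B_j\}_{j\neq i},\{\xi_j\}_j)$, and the restriction of $(X_1,\dots,X_n)$ to $[0,q]$ is measurable with respect to that $\sigma$-algebra enlarged by $\sigma(B_{i,r}:r\le q)$, so the increment $\int_q^{q'}\sigma_i\,dB_i$, being mean zero and independent of it, kills the term exactly. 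One then uses a stability estimate for the cascade-constrained particle system under removal of a single particle, controlling the expected discrepancy between $\mathbf{P}^n$ and $\mathbf{P}^{n,(i)}$ in the topology of weak convergence of measures induced by the M1 topology, and hence (by boundedness and Lipschitz approximation of $\Psi$) the error incurred in passing from $\mathbf{P}^n$ to $\mathbf{P}^{n,(i)}$, uniformly enough in $i$ after the averaging $\frac1n\sum_i$; the same estimate shows $\mathbf{P}^{n,(i)}\to\mathbf{P}^\star$, so the limit is unchanged. (The small-time control of Lemma~\ref{lem:contagion_at_0} again rules out any spurious mass in the limit.)

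Granted the $\mathcal{G}^{u,v}$-martingale property of $N_{u,v}$, the remainder is routine. From $\langle Z^\star_{u,v},B_0\rangle_t=\int_0^t\rho\sigma_{u,v}(s)\,ds$ one computes $\langle N_{u,v}\rangle_t=(1-\rho^2)\int_0^t\sigma^2_{u,v}(s)\,ds$ and $\langle N_{u,v},B_0\rangle\equiv0$, so $B_{u,v}:=(1-\rho^2)^{-1/2}\int_0^\cdot\sigma_{u,v}(s)^{-1}\,dN_{u,v}(s)$ — well defined since $\sigma_{u,v}\ge c>0$ by Assumption~\ref{MV_assump} — is a continuous $\mathcal{G}^{u,v}$-martingale with $\langle B_{u,v}\rangle_t=t$, hence a $\mathcal{G}^{u,v}$-Brownian motion by L\'evy, with $\langle B_{u,v},B_0\rangle\equiv0$. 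A Brownian motion relative to a filtration is independent of the $\sigma$-algebra at time $0$; since $\mathcal{G}^{u,v}_0\supseteq\sigma(\mathbf{P}^\star,B_0)$ and (from the $Z_{0+}$ information) also contains $Z(0)$, this already yields $B_{u,v}\perp(\mathbf{P}^\star,B_0)$ and $B_{u,v}\perp Z(0)$. Inverting the definition gives $N_{u,v}(t)=\sqrt{1-\rho^2}\int_0^t\sigma_{u,v}(s)\,dB_{u,v}(s)$, hence $Z^\star_{u,v}(t)=\int_0^t\sigma_{u,v}(s)\,d(\rho B_0(s)+\sqrt{1-\rho^2}B_{u,v}(s))$, and since $Z^\star_{u,v}$ is the local-martingale part of $Z$ by \eqref{Z_uv}, substituting back produces the dynamics of $Z$ claimed in the statement, together with the formula for $\mathcal{L}^\star_l$ recalled above. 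Finally, $Z(0)$ is subtracted off in \eqref{Z_uv}, so $Z^\star_{u,v}$, and therefore $B_{u,v}$, depends only on $(Z-Z(0),B_0)$; at the particle level $X_i(0)=\varphi(u^i,v^i,\bar u^i,\bar v^i,\xi_i)$ with $\xi_i$ independent of every Brownian motion and of the other $\xi_j$, and one further leave-one-out — now it is the single atom $\delta_{X_i(0)}$ that becomes negligible in $\mathbf{P}^n$, handled by inserting an extra bounded factor $h(Z(0))$ in the test functionals above — yields $Z(0)\perp(\mathbf{P}^\star,B_0)$ in the limit. Combining with $B_{u,v}\perp(Z(0),\mathbf{P}^\star,B_0)$ gives the asserted mutual independence of $Z(0)$, $(\mathbf{P}^\star,B_0)$ and $B_{u,v}$.

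The main obstacle is the leave-one-out step: making rigorous the stability of the cascade-constrained particle system under removal of a single particle in the M1 topology — where the interaction is singular and a borderline configuration could in principle tip a macroscopic cascade — and verifying that the attendant error vanishes uniformly enough after averaging over $i$, so that $\mathbf{P}^{n,(i)}$ and $\mathbf{P}^n$ share the limit $\mathbf{P}^\star$. Everything downstream of this (the L\'evy characterisation, independence of a Brownian motion from the initial $\sigma$-algebra, and the inversion of the noise representation) is standard.
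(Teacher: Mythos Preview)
Your overall architecture matches the paper's: define $N_{u,v}=Z^\star_{u,v}-\rho\int_0^\cdot\sigma_{u,v}\,dB_0$, set $B_{u,v}=(1-\rho^2)^{-1/2}\int_0^\cdot\sigma_{u,v}^{-1}\,dN_{u,v}$, invoke L\'evy's characterisation, and then read off the dynamics of $Z$ and the formula for $\mathcal{L}^\star_l$. The divergence is precisely at the step you single out as the main obstacle, and the paper handles it without any leave-one-out argument.

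The paper's device is to work through the disintegration $\mathbb{P}^{\mu,w}_{u,v}$ rather than through an enlarged filtration. Instead of testing $N_{u,v}(q')-N_{u,v}(q)$ against functionals $\Psi(\mathbf{P}^\star,B_0)\prod_i\phi_{j_i}(Z(q_i))$ and then facing the dependence of $\mathbf{P}^n$ on $B_i$, the paper shows directly that, for $\mathbb{P}_0^\star$-a.e.\ $(\mu,w)$, the process $B_{u,v}(\mu,w,\cdot)$ is a standard Brownian motion under $\mathbb{P}^{\mu,w}_{u,v}$, independent of $Z(0)$. This is done by proving
\[
\int_{\mathcal{P}_\varpi\times C_\mathbb{R}}\Bigl(\int_{\mathbb{R}^k\times\mathbb{R}^k} f(u,v)\,\mathbb{E}_{\mathbb{P}^{\mu,w}_{u,v}}\bigl[\Psi^{j;j_1,\ldots,j_m}_{q,q';q_1,\ldots,q_m}(\tilde{B}^{\mu,w}_{u,v})\bigr]\,d\varpi(u,v)\Bigr)^{\!2} d\mathbb{P}_0^\star(\mu,w)=0,
\]
where the test functionals are built from values of $\tilde{B}_{u,v}$ (the idiosyncratic martingale part) and $Z(0)$ only --- \emph{not} of $Z$ itself. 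At the particle level the inner integral becomes $\tfrac{1}{n}\sum_i f(u^i,v^i)\Psi(\sqrt{1-\rho^2}\!\int_0^\cdot\sigma_i\,dB_i,\,X_i(0))$, and since these integrands depend only on $(B_i,\xi_i)$, the square expands into a double sum whose off-diagonal terms vanish exactly by independence of $(B_i,\xi_i)$ from $(B_j,\xi_j)$, while the $n$ diagonal terms are $O(1/n)$ after dividing by $n^2$. No stability estimate for the cascade, no $\mathbf{P}^{n,(i)}$, is needed.

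Once $B_{u,v}(\mu,w,\cdot)$ is a Brownian motion under $\mathbb{P}^{\mu,w}_{u,v}$ for a.e.\ $(\mu,w)$, with law not depending on $(\mu,w)$, and likewise for $Z(0)$, the mutual independence of $(\mathbf{P}^\star,B_0)$, $B_{u,v}$ and $Z(0)$ under $\mathbb{P}^\star_{u,v}$ drops out of the disintegration identity $\mathbb{E}_{\mathbb{P}^\star_{u,v}}[\phi(Z(0))\varphi(B_{u,v})\psi(\mathbf{P}^\star,B_0)]=\int\psi(\mu,w)\,\mathbb{E}_{\mathbb{P}^{\mu,w}_{u,v}}[\phi(Z(0))\varphi(B_{u,v})]\,d\mathbb{P}_0^\star$. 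So your leave-one-out route may be viable, but it introduces a genuinely hard stability question for the singular cascade that the paper's $L^2$/disintegration trick sidesteps entirely; the price the paper pays is choosing test functionals of the idiosyncratic noise and $Z(0)$ rather than of $Z$, which is exactly what makes the off-diagonal cancellation automatic.
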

\begin{proof}
	Let $M^\star_{u,v}$ be given by \eqref{Z_uv}, which yields a continuous martingale under $\varpi$-almost every $\mathbb{P}_{u,v}^\star$ by Proposition \ref{prop:martingale_props}. Since $B_0$ is a Brownian motion under $\varpi$-almost every $\mathbb{P}_{u,v}^\star$, by construction of the probability space, we can define
 $\tilde{Z}_{u,v}:\Omega_\star \rightarrow D_\mathbb{R}$ by
	\begin{equation}\label{eq:Z-tilde}
	\tilde{Z}_{u,v}(t):= M^\star_{u,v}(t)-\int_0^t \rho \sigma_{u,v}(s) dB_0(s)
	\end{equation}
	and note that this is a continuous martingale under $\mathbb{P}_{u,v}^\star$ for $\varpi$-almost every $(u,v)\in\mathbb{R}^k\times \mathbb{R}^k$. Defining also $B_{u,v}: \Omega_\star \rightarrow D_\mathbb{R}$ by
	\begin{equation}\label{eq:B_uv}
	B_{u,v}(t):= \int_0^t \frac{1}{\sigma_{u,v}(s)\sqrt{1-\rho^2}} d\tilde{Z}_{u,v}(s),
	\end{equation}
	we can then conclude from \eqref{eq:quad_var} in Proposition \ref{prop:martingale_props} and Assumption \ref{assump:paramters_emperical} that, under $\mathbb{P}_{u,v}^\star$, for $\varpi$-almost every $(u,v)\in\mathbb{R}^k\times \mathbb{R}^k$, the processes 
	$B_0$ and $B_{u,v}$ are continuous martingales on $[0,T]$ with
	\[
	\langle B_{0}\rangle_t=t, \quad \langle B_{u,v}\rangle_t = t \quad \text{and} \quad \langle B_{u,v}, B_{0}\rangle_t = 0,
	\]
	for all $t\in[0,T]$. Therefore, Levy's characterisation theorem gives that $B_{u,v}$ and $B_0$ are independent Brownian motions on $(\Omega_\star , \mathcal{B}(\Omega_\star))$ under $\mathbb{P}_{u,v}^\star$ for $\varpi$-almost every $(u,v)\in\mathbb{R}^k\times \mathbb{R}^k$.
	
	Recalling the construction of the Markov kernel $\mathbb{P}_{u,v}^{\mu,w}$ from \eqref{eq:1st_Markov_kernel}-\eqref{eq:P_uv_defn}, we can readily deduce from the weak convergence of $\mathbf{P}_0^n$, see \eqref{eq:weak_conv_net_initial}, that, for $\varpi$-almost every $(u,v)\in\mathbb{R}^k\times\mathbb{R}^k$,
	\begin{equation}\label{eq:initial_limit}
	\mathbb{E}_{\mathbb{P}_{u,v}^{\mu,w}} \bigl[  \phi(Z(0))\bigr] = \mathbb{E}_{\mathbb{P}_{u,v}^\star} \bigl[  \phi(Z(0))\bigr]= \int_{\mathbb{R}}\phi(x) V_0(x|u,v)dx
	\end{equation}
	for $\mathbb{P}_0^\star$-almost all $(\mu,w)\in\mathcal{P}_\varpi( D_\mathbb{R})\times C_\mathbb{R}$. Next, we define
	\begin{equation}\label{eq:func_check_martingale}
	\Psi^{j;j_1,\ldots,j_m}_{q,q^\prime;q_1,\ldots,q_m}\!(\tilde{\eta}, \eta) :=\bigl(\tilde{\eta}(q^\prime)-\tilde{\eta}(q)\bigr)\prod_{i=1}^m\phi_{j_i}(\tilde{\eta}(q_i))\phi_{j}(\eta(0)).
	\end{equation}
	Then we can consider the functionals
	\[
	\hat{\Psi}^{j;j_1,\ldots,j_m}_{q,q^\prime;q_1,\ldots,q_m} (u,v,\mu,w,\eta) = \Psi^{j;j_1,\ldots,j_m}_{q,q^\prime;q_1,\ldots,q_m}\bigr( \tilde{Z}_{u,v}(\mu ,w, \eta), Z(\mu, w, \eta) \bigr),
	\]
	where we recall that $Z(\mu, w, \eta)=\eta$. Similarly to \eqref{eq:rewritten_expectation} in the proof of Proposition \ref{prop:martingale_props}, we can deduce from the constructions in Section \ref{subsect:characterise_limit} that 
	the expression
	\begin{equation}\label{eq:com_BM_martingale}
	\int_{\mathcal{P}_\varpi(D_\mathbb{R})\times C_\mathbb{R}} \biggl( \int_{\mathbb{R}^k\times \mathbb{R}^k} f(u,v) \mathbb{E}_{\mathbb{P}_{u,v}^{\mu,w}} \bigl[  \Psi^{j_0,j_1,\ldots,j_m}_{q,q^\prime;q_1,\ldots,q_m}\!\bigl(  \tilde{Z}_{u,v}(\mu,w, \cdot), Z(\mu,w,\cdot) \bigr) \bigr]  d\varpi(u,v) \biggr)^{\!2} d\mathbb{P}_0^\star(\mu,w)
	\end{equation}
	coincides with
\begin{equation*}\label{eq:rewritten_integral}
\mathbb{E}_{\mathbb{P}_0^\star} \biggl[  \biggl( \int_{\mathbb{R}^k\times \mathbb{R}^k \times D_\mathbb{R}} f(u,v)\hat{\Psi}^{j,j_1,\ldots,j_m}_{q,q^\prime;q_1,\ldots,q_m}\!(u,v, \mathbf{P}^\star, B_0, \eta  )d\mathbf{P}^\star(u,v,\eta) \biggr)^{\!2} \,   \biggr].
\end{equation*}
	By a repeated use of Skorokhod's representation theorem, we can write this as
	\[
	\mathbb{E} \biggl[  \biggl( \int_{\mathbb{R}^k\times \mathbb{R}^k \times D_\mathbb{R}} f(u,v)\hat{\Psi}^{j,j_1,\ldots,j_m}_{q,q^\prime;q_1,\ldots,q_m}\!(u,v, \mathbf{Q}^\star, B^\star, \eta  )d\mathbf{Q}^\star(u,v,\eta) \biggr)^{\!2} \,   \biggr]
	\]
	with 
	\begin{equation*}
	\int\!\! f(u,v)\hat{\Psi}^{j,j_1,\ldots,j_m}_{q,q^\prime;q_1,\ldots,q_m}\!(u,v, \mathbf{Q}^\star, B^\star , \eta  )d\mathbf{Q}^\star(u,v,\eta)=\mathbb{E} \bigl[   f(\mathbf{u},\mathbf{v})\hat{\Psi}^{j,j_1,\ldots,j_m}_{q,q^\prime;q_1,\ldots,q_m}\!(\mathbf{u},\mathbf{v}, \mathbf{Q}^\star, B^\star , Z^\star  ) \bigr],
	\end{equation*}
where $(\mathbf{Q}^\star, B^\star)$ and $(\mathbf{u},\mathbf{v}, Z^\star )$ are the limits of suitable almost surely converging sequences on two separate probability spaces $(\Omega_1 , \mathcal{F}_1, \mathbb{P}_1)$ and $(\Omega_2 , \mathcal{F}_2, \mathbb{P}_2)$. In particular, the expectation on the right-hand side above does not act on the pair $(\mathbf{Q}^\star, B^\star)$. At this point, the main difference from the arguments in Proposition \ref{prop:martingale_props} concerns how to ascertain the $\mathbb{P}_2$-almost sure convergence
\[
\tilde{Z}_{\mathbf{u}^n,\mathbf{v}^n}(\mathbf{Q}^n(\omega_1),B^n(\omega_1), Z^n) \rightarrow \tilde{Z}_{\mathbf{u},\mathbf{v}}(\mathbf{Q}^\star(\omega_1),B^\star(\omega_1), Z^\star) ,
\]
for $\mathbb{P}_1$-almost all $\omega_1 \in \Omega_1$. By the distribution of the $(\mathbf{u}^n,\mathbf{v}^n)$'s and Assumption \ref{assump:paramters_emperical}, we have that (almost surely) the sequence of functions $\sigma_{\mathbf{u}^n,\mathbf{v}^n}$ has H{\"o}lder norms on $[0,T]$ that are bounded uniformly in $n\geq 1$, for some H{\"o}lder exponent $\beta >1/2$. In particular, Arzela--Ascoli and the pointwise convergence gives that $\sigma_{\mathbf{u}^n,\mathbf{v}^n}$ converges uniformly to $\sigma_{\mathbf{u},\mathbf{v}}$ on $[0,T]$ (almost surely). Furthermore, we have that $B^n$ and $B^\star$ are Brownian motions with $B^n$ converging uniformly to $B^\star$ on $[0,T]$ (almost surely). Therefore, standard properties of Young integrals give that (almost surely) we can interpret the stochastic integral in \eqref{eq:Z-tilde} pathwise and we have
\[
\int_0^t \rho \sigma_{\mathbf{u}^n,\mathbf{v}^n}(s) dB^n(\omega_1)(s) \rightarrow \int_0^t \rho \sigma_{u,v}(s) dB^\star(\omega_1)(s),
\]
uniformly on $[0,T]$, $\mathbb{P}_2$-almost surely, for $\mathbb{P}_1$-almost all $\omega_1\in\Omega_1$. From here, arguing as in Proposition \ref{prop:martingale_props}, via Lemma \ref{lem:first_conv_result}, we arrive at the conclusion that \eqref{eq:com_BM_martingale} must be equal to the limit of
\begin{equation}\label{eq:rewritten_integral_prelimit}
\mathbb{E} \biggl[  \biggl( \int_{\mathbb{R}^k\times \mathbb{R}^k \times D_\mathbb{R}} f(u,v)\hat{\Psi}^{j,j_1,\ldots,j_m}_{q,q^\prime;q_1,\ldots,q_m}\!(u,v, \mathbf{P}^n, B_0, \eta  )d\mathbf{P}^n(u,v,\eta) \biggr)^{\!2} \,   \biggr]
\end{equation}
as $n\rightarrow \infty$. By the independence of the Brownian motions $B_i$ in the finite particle system, and their independence of $X_i(0)$, we can easily check that \eqref{eq:rewritten_integral_prelimit} is of order $1/n$ as $n\rightarrow\infty$, and so we conclude that \eqref{eq:com_BM_martingale} is zero. By the form of \eqref{eq:func_check_martingale}, we therefore obtain that, for $\mathbb{P}_0^\star$-almost all $(\mu,w)\in\mathcal{P}_\varpi \times C_\mathbb{R}$, the process $\tilde{Z}_{u,v}(\mu ,w, \cdot) : D_\mathbb{R} \rightarrow D_\mathbb{R}$ is a martingale under $\mathbb{P}_{u,v}^{\mu,w}$, independently of $Z(0)$, for $\varpi$-almost every $(u,v)\in\mathbb{R}^k\times\mathbb{R}^k$.

We can now repeat the above argument with $\tilde{Z}_{u,v}^2 - \int_0^\cdot(1-\rho^2)\sigma_{u,v}^2(s)ds$ to deduce that each process $\tilde{Z}_{u,v}(\mu ,w, \cdot) : D_\mathbb{R} \rightarrow D_\mathbb{R}$ has quadratic variation $\int_0^\cdot(1-\rho^2)\sigma_{u,v}^2(s)ds$, and, similarly, another limiting argument gives continuity of the paths. It then follows that, for $\mathbb{P}_0^\star$-almost all $(\mu,w)\in\mathcal{P}_\varpi(D_\mathbb{R}) \times C_\mathbb{R}$, the process $\tilde{B}_{u,v}(\mu ,w, \cdot) : D_\mathbb{R} \rightarrow D_\mathbb{R}$ is a Brownian motion under $\mathbb{P}_{u,v}^{\mu,w}$, independently of $Z(0)$, for $\varpi$-almost every $(u,v)\in\mathbb{R}^k\times\mathbb{R}^k$. Using this and \eqref{eq:initial_limit}, we get
	\begin{align}\label{eq:independence}
\int_{\mathbb{R}^k\times \mathbb{R}^k} &f(u,v)\mathbb{E}_{\mathbb{P}_{u,v}^\star} \bigl[ \phi(Z(0)) \varphi(B_{u,v}) \psi(\mathbf{P^\star},B_0) \bigr]d\varpi(u,v) \nonumber\\
&= \int_{\mathcal{P}_\varpi(D_\mathbb{R})\times C_\mathbb{R}} \psi(\mu,w)  \int_{\mathbb{R}\times \mathbb{R}^k} f(u,v) \mathbb{E}_{\mathbb{P}_{u,v}^{\mu,w}} \bigl[  \phi(Z(0)) \varphi(B_{u,v}(\mu,w,\cdot)) \bigr] d\varpi(u,v)d\mathbb{P}_0^\star(\mu,w)\nonumber \\
& = \int_{\mathcal{P}_\varpi(D_\mathbb{R})\times C_\mathbb{R}} \psi(\mu,w)  \int_{\mathbb{R}\times \mathbb{R}^k} f(u,v) \mathbb{E}_{\mathbb{P}_{u,v}^{\star}} \bigl[  \phi(Z(0))\bigr] \mathbb{E}_{\mathbb{P}_{u,v}^{\star}} \bigl[\varphi(B_{u,v}) \bigr] d\varpi(u,v)d\mathbb{P}_0^\star(\mu,w)\nonumber \\
& = \int_{\mathbb{R}\times \mathbb{R}^k} f(u,v)  \mathbb{E}_{\mathbb{P}_{u,v}^{\star}} \bigl[  \phi(Z(0))\bigr] \mathbb{E}_{\mathbb{P}_{u,v}^{\star}} \bigl[\varphi(B_{u,v}) \bigr] \mathbb{E}_{\mathbb{P}_{u,v}^\star} \bigl[  \psi(\mathbf{P^\star},B_0) \bigr] d\varpi(u,v)
\end{align}
		for any continuous function $f:\mathbb{R}\times \mathbb{R}^k \rightarrow \mathbb{R}$, and so we can deduce that the random variables $(\mathbf{P^\star},B_0)$, $B_{u,v}$, and $Z(0)$ are mutually independent under $\mathbb{P}_{u,v}$ for $\varpi$-almost every $(u,v)\in\mathbb{R}^k\times\mathbb{R}^k$. Finally, by the definition of $B_{u,v}$ in \eqref{eq:B_uv}, we have that
		\[
		dZ(t)= b_{u,v}(t)dt + \sigma_{u,v}(t)d(\rho B_0(t)+\sqrt{1-\rho^2}B_{u,v}(t)) - dF\Bigl( \sum_{l=1}^k v_l \int_0^t g_{u,v}(s)d \mathcal{L}_l^\star(s) \Bigr),
		\]
	where $B_0$ and $B_{u,v}$ are standard Brownian motions under $\mathbb{P}_{u,v}^\star$, for $\varpi$-almost every $(u,v)\in\mathbb{R}$, by the above. Recalling from \eqref{eq:limit_feedback_processes} that each $\mathcal{L}^\star_l$ can be written as
	\[
	\mathcal{L}_l^\star(t)=\int_{\mathbb{R}^k\times\mathbb{R}^k} u_l \mathbb{P}^\star_{u,v}\bigl(t \geq \tau_0(Z) \, | \, B_0, \mathbf{P}^\star \bigr)d\varpi(u,v),\quad \text{for}\quad t\geq 0,
	\]
	this completes the proof.
\end{proof}

\section{The common noise system under the smallness condition}\label{Sect:com_nois}

This section is dedicated to a short proof of Theorem \ref{thm:global_unique}. To this end, we assume throughout that each initial density $V_0(\cdot|u,v)\in L^\infty(0,\infty)$ satisfies the bound
\begin{equation}\label{eq:smallness_cond}
\Vert V_{0}(\cdot|u,v) \Vert_{\infty} < g_{u,v}(0)\Vert F \Vert_{\mathrm{Lip}}\frac{1 }{\max\{u\cdot \hat{v} \; | \;  \hat{v}\in S(\mathbf{v}) \;\mathrm{s.t.}\; u\cdot \hat{v} >0\}},
\end{equation}
for all $ (u,v) \in S(\mathbf{u},
\mathbf{v})$. When \eqref{eq:smallness_cond} holds, it follows from the same arguments as in \cite[Theorem 3.4]{FS20} that any solution to \eqref{CMV} must be continuous in time for all $t\in[0,\infty)$. Next, we show in Sections \ref{subsect:density} and \ref{Proof_thm_uniq_common} below that the condition \eqref{eq:smallness_cond} also entails global uniqueness of \eqref{CMV}, hence completing the proof of Theorem \ref{thm:global_unique}.

The uniqueness of limit points in Theorem \ref{thm:yamada-watanabe} follows by the same argument. One simply needs to observe that all the steps in Sections \ref{subsect:density} and \ref{Proof_thm_uniq_common} also hold if we condition on $(\mathbf{P}^\star,B_0)$ in place of $B_0$ throughout, exploiting that $(\mathbf{P}^\star,B_0)$ is independent of the particle-specific Brownian motions $B_{u,v}$. In this way, we get global continuity in time and global uniqueness for any solution to the relaxed formulation \eqref{CMV_relaxed}. Given the existence of relaxed solutions and their pathwise uniqueness, a Yamada--Watanabe argument as in \cite[Theorem 2.3]{LS18b} completes the proof of Theorem \ref{thm:yamada-watanabe}.

\subsection{Bounded densities for all time}\label{subsect:density}

We start by observing that there is a density $V_t(\cdot | u,v)\in L^\infty(0,\infty)$ for each $X_{u,v}(t)$ on the positive half-line with absorption at the origin. Let $p_t(\cdot|u,v)$ denote the density of the random variable
\[
Z_{u,v}(t):=\int_0^t\sqrt{1-\rho^2}\sigma_{u,v}(s)dB_{u,v}(s),
\]
for each $t\geq 0$ and $(u,v)\in\mathbb{R}^k\times\mathbb{R}^k$. Letting
\[
\nu_t(\cdot | u,v):=\mathbb{P}(X_{u,v}\in \cdot\,,t<\tau_{u,v} \, | \, B_0 ),
\]
and considering the random variables
\[
Y_{u,v}(t):=\int_0^tb_{u,v}(s)ds - \int_0^t\rho\sigma_{u,v}(s)dB_0(s)-F\Bigl( \displaystyle \sum_{l=1}^{k} v_{l}^i \! \int_0^t g_{i}(s) d\mathcal{L}^n_{l,i}(s) \Bigr),
\]
a simple manipulation and Tonelli's theorem give that, for any Borel set $A\in\mathbb{R}$, 
\begin{align*}
\nu_t(A\,|\,B_0) &= \int_{0}^\infty \mathbb{P} \bigl( X^x_{u,v}(t) \in A , \, t < \tau^x_{u,v}  \mid B_0\bigr) V_0(x|u,v)dx \\ &\leq \int_{\mathbb{R}} \int_{A} p(t,y+x+Y(t)) V_0(x|u,v)dy dx \\
& =  \int_{A} \int_{\mathbb{R}} p(t,x+y+Y(t)) V_0(x|u,v) dx dy \leq \Vert V_0(\cdot | u,v) \Vert_\infty \text{Leb}(A),
\end{align*}
since $p_t(\cdot|u,v)$ integrates to $1$ on $\mathbb{R}$. Consequently, there is a bounded density $V_t(\cdot|u,v)$ of $\nu_t(\cdot|u,v)$. Furthermore, we see that $\Vert V_t(\cdot|u,v) \Vert_\infty \leq \Vert V_0(\cdot | u,v) \Vert_\infty$, so the bound \eqref{eq:smallness_cond} continues to hold for all times $t\geq 0$.

\subsection{Global uniqueness under the smallness condition}\label{Proof_thm_uniq_common}   

To prove the uniqueness part of Theorem \ref{thm:global_unique}, we adapt the arguments from the proof of \cite[Thm.~2.3]{LS18b}. Let $(X,\mathcal{L})$ and $(\bar{X},\mathcal{\bar{L}})$ be any two solutions to (\ref{CMV}) coupled through the same Brownian drivers $B$ and $B_0$. Let $\mathbf{L}_v$ and $\bar{\,\mathbf{L}}_v$ be the total loss processes, as defined in \eqref{eq:total_loss}, for the two different solutions. Retracing the arguments of \cite[Lemma 2.1]{LS18b}, and applying Fubini's theorem, we can deduce that
\begin{align}\label{eq:diff_loss_bound}
\loss_v(s) - \bar{\,\loss}_v(s)  \leq \mathbb{E} & \biggl[ \sum_{l=1}^k v_l \!\int_{\mathbb{R}^k\times \mathbb{R}^k} \hat{u}_l \int_{\bar{I}_{\hat{u},\hat{v}}(s)}^{I_{\hat{u},\hat{v}}(s)} \!\!V_0(x|\hat{u},\hat{v})dx d\varpi(\hat{u},\hat{v}) \,\Big|\, B^0\biggr],
\end{align}
where we have introduced the auxiliary processes
\[
I_{u,v}(t) := \sup_{s\leq t}\biggl\{ F\Bigl( \!\int_0^s g_{u,v}(r) d\mathbf{L}_v(r)  \Bigr) - Y_{u,v}(s)  \biggr\}
\]
and 
\[
\bar{I}_{u,v}(t) := \sup_{s\leq t}\biggl\{ F\Bigl( \!\int_0^s g_{u,v}(r) d\!\bar{\;\mathbf{L}}_v(r)  \Bigr) - Y_{u,v}(s) \biggr\}
\]
with
\[
Y_{u,v}(s) :=\int_0^sb_{u,v}(r)dr+\int_0^s \sigma_{u,v}(r)d(\rho B_0 + \sqrt{1-\rho^2}B_{u,v})(r).
\]
By symmetry, we can observe that $\bar{\,\loss}_v(s)- \loss_v(s)$ satisfies a bound entirely analogous to \eqref{eq:diff_loss_bound} only with $I_{u,v}(s)$ and $\bar{I}_{u,v}(s)$ interchanged in the range of integration. Consequently, we in fact have
\begin{align}\label{eq:main_loss_diff}
|\loss_v(s) - \bar{\,\loss}_v(s)|  \leq \sum_{l=1}^k v_l \mathbb{E} & \biggl[  \int_{\mathbb{R}^k\times \mathbb{R}^k} \hat{u}_l \int_{\bar{I}_{\hat{u},\hat{v}}(s)}^{I_{\hat{u},\hat{v}}(s)} \!\!V_0(x|\hat{u},\hat{v})dx d\varpi(\hat{u},\hat{v}) \,\Big|\, B_0\biggr] \nonumber \\
&\lor \sum_{l=1}^k v_l \mathbb{E} \biggl[  \int_{\mathbb{R}^k\times \mathbb{R}^k} \hat{u}_l \int_{I_{\hat{u},\hat{v}}(s)}^{\bar{I}_{\hat{u},\hat{v}}(s)} \!\!V_0(x|\hat{u},\hat{v})dx d\varpi(\hat{u},\hat{v}) \,\Big|\, B_0\biggr],
\end{align}
for all $s\geq 0$. Now, by repeating the first estimate from the proof of Lemma \ref{lemma1_HLS} in Section \ref{Sect:Idio} with $\mathbf{L}$ and $\!\bar{\,\mathbf{L}}$ in place of $\ell$ and $\bar{\ell}$, respectively, we obtain that
\begin{align*}
F\Bigl( \!\int_0^t g_{u,v}(s) & d\mathbf{L}_v(s) \Bigr)  \geq F\Bigl( \int_0^t g_{u,v}(s)d\!\bar{\;\mathbf{L}}_v(s) \Bigr)   - g_{u,v}(0) \Vert F \Vert_{\text{Lip}} \Vert \mathbf{L} - \!\bar{\,\mathbf{L}}\Vert_{t}^\star,
\end{align*}
where
\[
\Vert \mathbf{L} - \!\bar{\,\mathbf{L}}\Vert_{t}^\star := \sup_{v\in S(\mathbf{v})} \sup_{s\in[0,t]}
| \loss_v(s) - \bar{\,\loss}_v(s) | .
\]
In turn, relying on this inequality together with the estimate \eqref{eq:main_loss_diff}, we can retrace the arguments of \cite[Theorem 2.2]{LS18b} in order to arrive at
\begin{align*}
| \loss_v(s) - \bar{\,\loss}_v(s) | \leq \Vert F \Vert_{\text{Lip}}  \Vert \loss- \bar{\,\loss}\Vert_{s}^\star  \sum_{l=1}^k v_l \!\int_{\mathbb{R}^k\times \mathbb{R}^k} g_{\hat{u},\hat{v}}(0)\hat{u}_l  \Vert V_0(\cdot |\hat{u},\hat{v}) \Vert_{\infty} d\varpi(\hat{u},\hat{v}) .
\end{align*}
From here, it simply remains to observe that the bound \eqref{eq:smallness_cond} gives
\begin{align*}
&\sup_{v\in S(\mathbf{v})}\Vert F \Vert_{\text{Lip}}\sum_{l=1}^k v_l \!\int_{\mathbb{R}^k\times \mathbb{R}^k} g_{\hat{u},\hat{v}}(0)\hat{u}_l  \Vert V_0(\cdot |\hat{u},\hat{v}) \Vert_{\infty} d\varpi(\hat{u},\hat{v}) < 1,
\end{align*}
and hence, for any $s \geq0$, we have
\[
\sup_{s \in S(\mathbf{v})}| \loss_v(s) - \bar{\,\loss}_v(s) | \leq (1-\delta)  \Vert \loss- \bar{\,\loss}\Vert_{s}^\star,
\]
for some $\delta >0$. Naturally, this implies $\Vert \loss- \bar{\,\loss}\Vert_{s}^\star=0$ for all $s\geq 0$, as we otherwise have a contradiction, and so there is indeed uniqueness of solutions.

\section{The idiosyncratic system up to explosion}\label{Sect:Idio}

In this section we give a succinct proof of Theorem \ref{thm:idio_reg}, based on the arguments from \cite{HLS18}. Let us start by introducing the notation $\Vert f \Vert_{t}:=\Vert f \Vert_{{L{}}^{\infty}(0,t)}$ and recalling the notation $S(\mathbf{v})$ for the support of the second marginal of $\varpi$. With this notation, we can then consider  the space of continuous maps $v\mapsto\ell_{v}(\cdot)$ from $S(\mathbf{v})$ to $L^{\infty}(0,t)$, denoted by
\[
C^\star_t:= C\bigl(S(\mathbf{v});L^{\infty}(0,t) \bigr),
\]
with respect to the following supremum norm
\[
\Vert \ell \Vert_t^\star := \sup_{v\in S(\mathbf{v})} \Vert \ell_v(\cdot)\Vert_t.
\]
Since the domain $S(\mathbf{v})$ is a compact subset of $\mathbb{R}^k$, by Assumption \ref{assump:paramters_emperical}, and noting that the codomain $L^{\infty}(0,t)$ is a Banach space, this norm makes $C_t^\star$ a Banach space. In the next subsection, we will show that one can find a nice solution to the McKean--Vlasov system \eqref{CMV} with $\rho=0$, for which the contagion processes are continuously differentiable in time up until an explosion time. This is achieved by All a fixed point argument for a mapping $\Gamma:C_t^\star\rightarrow C_t^\star$ (defined in \eqref{eq:Gamma_fp_map} below) for a small enough time $t>0$. Throughout, we assume Assumptions 1.1, 1.2, and 1.3 are satisfied.

\subsection{Existence of differentiable contagion processes up to explosion}\label{exist_reg_soln}

Given $T>0$, we define the map $\Gamma : C_T^\star \mapsto C_T^\star$ by 
\begin{equation}\label{eq:Gamma_fp_map}
\Gamma[\ell]_{v}(t):=\sum_{l=1}^k v_l \int_{\mathbb{R}^k \times \mathbb{R}^k } \int_0^\infty \hat{u}_l \mathbb{P}(t\geq \tau^{x,\ell}_{\hat{u},\hat{v}}) d\nu_0(x|u,v) d \varpi(\hat{u},\hat{v}),
\end{equation}
for all $t\in[0,T]$ and $v\in S(\mathbf{v})$, where
\begin{equation}\label{X^ell}
\begin{cases}
\tau^{x,\ell}_{u,v}=\inf\{t > 0: X^{x,\ell}_{u,v}(t)\leq0\} \\[6pt]
X^{x,\ell}_{u,v}(t)=x + \int_0^t \!b_{u,v}(s)ds + \int_0^t\!\sigma_{u,v}(s) dB_{u,v}(s) - F\bigl( \int_0^t \!g_{u,v}(s)d\ell_v(s) \bigr).
\end{cases}
\end{equation}
Note that, as long as $s \mapsto \ell_v(s)$ is continuous or of finite variation, the integral of each $g_{u,v}$ against $\ell_v$ in (\ref{X^ell}) is well-defined, since each $g_{u,v}$ is both continuous and of finite variation by Assumption \ref{MV_assump} (see e.g.~\cite[Sect.~1.2]{stroock_integration}). Naturally, all of the results that follow are stated for inputs such that the mapping $\Gamma$ makes sense.

The cornerstone of our analysis is the next comparison argument. Below, it leads us to the fixed point argument for existence of regular solutions, and it then reappears in the generic uniqueness argument of Section \ref{generic uniqueness} where we complete the proof of the full statement of Theorem \ref{thm:idio_reg}.

\begin{lem}[Comparison argument]\label{lemma1_HLS} Fix any two $\ell,\bar{\ell}\in C_T^\star$ such that $s\mapsto\ell_v(s)$ and $s\mapsto \bar{\ell}_v(s)$ are increasing with $\ell_v(0)=\bar{\ell}_v(0)=0$ for all $v\in S(\mathbf{v})$. Fix also $t_0>0$ and suppose $s\mapsto\ell_v(s)$ is continuous on $[0,t_0)$ for all $v\in S(\mathbf{v})$. Then we have
	\[
	\bigl( \Gamma[\ell]_v(t) - \Gamma[ \bar{\ell} \, ]_v(t) \bigr )^+ \leq C \Vert (\ell - \bar{\ell}\,)^+\Vert_{t}^\star \int_0^t (t-s)^{-\frac{1}{2}} d \Gamma[\ell]_v(s),
	\]
	for all $t<t_0$ and all  $v\in S(\mathbf{v})$, where $C>0$ is a fixed constant independent of $t_0$ and $v$.
\end{lem}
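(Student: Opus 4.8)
The plan is to follow the structure of the analogous comparison estimate in \cite{HLS18}, adapted to the present $v$-indexed setting. First I would write out the difference $\Gamma[\ell]_v(t)-\Gamma[\bar\ell]_v(t)$ using \eqref{eq:Gamma_fp_map}. Since $\ell$ and $\bar\ell$ enter only through the absorbed dynamics $X^{x,\ell}_{\hat u,\hat v}$ and the hitting times $\tau^{x,\ell}_{\hat u,\hat v}$, the key is a pointwise (in $x$, $\hat u$, $\hat v$) comparison of the two survival probabilities. Because $F$ is non-decreasing and each $g_{\hat u,\hat v}$ is non-negative, if $\ell_v\ge\bar\ell_v$ on $[0,t]$ then the feedback term $F(\int_0^s g_{\hat u,\hat v}\,d\ell_{\hat v})$ dominates its barred counterpart, so $X^{x,\ell}$ lies below $X^{x,\bar\ell}$ pathwise; in general the relevant control is by $\Vert(\ell-\bar\ell)^+\Vert_t^\star$. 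More precisely, I would use the first estimate from this same proof (as invoked in Section~\ref{Sect:com_nois}), namely
\[
F\Bigl(\!\int_0^s g_{\hat u,\hat v}(r)\,d\bar\ell_{\hat v}(r)\Bigr)\ge F\Bigl(\!\int_0^s g_{\hat u,\hat v}(r)\,d\ell_{\hat v}(r)\Bigr) - g_{\hat u,\hat v}(0)\Vert F\Vert_{\mathrm{Lip}}\Vert(\ell-\bar\ell)^+\Vert_s^\star,
\]
which, together with $\Vert g_{\hat u,\hat v}\Vert_\infty$ and $\Vert F\Vert_{\mathrm{Lip}}$ being bounded uniformly on the compact support $S(\mathbf u,\mathbf v)$, shows that $X^{x,\ell}_{\hat u,\hat v}(s)\ge X^{x,\bar\ell}_{\hat u,\hat v}(s)-c_0\Vert(\ell-\bar\ell)^+\Vert_t^\star$ for a uniform constant $c_0$.

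With this pathwise bound in hand, the difference of survival indicators is controlled by the event that $X^{x,\bar\ell}_{\hat u,\hat v}$ survives up to time $t$ but sits within distance $c_0\Vert(\ell-\bar\ell)^+\Vert_t^\star$ of the origin at some earlier time; more conveniently, one estimates $\mathbb P(t\ge\tau^{x,\bar\ell}_{\hat u,\hat v}) - \mathbb P(t\ge\tau^{x,\ell}_{\hat u,\hat v})$ by the probability that the (time-changed) Brownian path $X^{x,\bar\ell}$ enters the strip $[0,c_0\Vert(\ell-\bar\ell)^+\Vert_t^\star]$, and then integrates against $\nu_0(dx|u,v)$ and $\varpi$. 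The standard move, exactly as in \cite{HLS18}, is to recognise that integrating the hitting-strip probability over the initial law and over $\hat u,\hat v$ reproduces, up to the non-degeneracy of $\sigma$ (which gives a Gaussian density bounded by $C(t-s)^{-1/2}$ on short time scales), a Stieltjes integral of the form $\int_0^t (t-s)^{-1/2}\,d\Gamma[\ell]_v(s)$: the measure $d\Gamma[\ell]_v$ precisely records the rate at which mass (weighted by $\hat u_l$ and summed against $v_l$) is absorbed, and the extra strip of width proportional to $\Vert(\ell-\bar\ell)^+\Vert_t^\star$ contributes that factor. Taking the positive part and collecting constants yields the claimed inequality with a constant $C$ depending only on $\Vert F\Vert_{\mathrm{Lip}}$, the uniform bounds on $g$ and $\sigma$, and $\Vert V_0\Vert_\infty$, hence independent of $t_0$ and $v$.

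The main obstacle I anticipate is making rigorous the heat-kernel estimate that converts the "Brownian path enters a thin strip near $0$ before time $t$, having survived until $s$" probability into the density bound $C(t-s)^{-1/2}$ against $d\Gamma[\ell]_v(s)$ — this is where the continuity of $s\mapsto\ell_v(s)$ on $[0,t_0)$ is essential (it guarantees $X^{x,\ell}$ has no jumps, so the first-passage decomposition and the strong Markov property apply cleanly), and where one must handle the time-dependent but non-degenerate volatility $\sigma_{u,v}$, e.g.\ via a deterministic time change reducing to standard Brownian motion and a reflection/first-passage density bound. The $v$-indexing and the averaging against $\varpi$ (with $\hat u\cdot v\ge0$ on the support, by Lemma~\ref{eq:uv-compact-nonneg}, ensuring all quantities have the right sign) are mostly bookkeeping on top of the one-dimensional estimate, and the compactness of $S(\mathbf u,\mathbf v)$ ensures all constants can be taken uniform in $v$.
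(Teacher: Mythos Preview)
Your plan is correct and follows essentially the same route as the paper: first the Lipschitz/integration-by-parts bound on the feedback to get the pathwise inequality $X^{x,\bar\ell}_{\hat u,\hat v}(s)-X^{x,\ell}_{\hat u,\hat v}(s)\le g_{\hat u,\hat v}(0)\Vert F\Vert_{\mathrm{Lip}}\Vert(\ell-\bar\ell)^+\Vert_s^\star$, then condition on $\tau^{x,\ell}_{\hat u,\hat v}=s$ (using continuity of $\ell$ so $X^{x,\ell}(s)=0$) and apply the strong Markov property together with the law of the infimum of a time-changed Brownian motion to get the $(t-s)^{-1/2}$ factor against $d\mathbb P(s\ge\tau^{x,\ell}_{\hat u,\hat v})$, and finally multiply by $v\cdot\hat u\ge0$ and integrate over $\nu_0$ and $\varpi$ to produce $d\Gamma[\ell]_v(s)$.

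Two small remarks: the difference you want to bound is $\mathbb P(t\ge\tau^{x,\ell})-\mathbb P(t\ge\tau^{x,\bar\ell})$ (you wrote it once with the opposite sign), and the constant $C$ does \emph{not} depend on $\Vert V_0\Vert_\infty$, since the initial density is absorbed directly into the measure $d\Gamma[\ell]_v(s)$ rather than being estimated separately.
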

\begin{proof} Fix $t<t_0$. Recalling that $\ell_v(0)=0$, integration by parts for Riemann--Stieltjes integrals (see e.g., \cite[Sect.~1.2]{stroock_integration}) gives
	\begin{equation*}
	\int_0^t g_{u,v}(s)d\ell_v(s) = g_{u,v}(t)\ell_v(t) + \int_0^t \ell_v(s) d(-g_{u,v})(s),
	\end{equation*}
	and likewise for $\bar{\ell}_v$. Using this and the assumptions on $F$, $g_{u,v}$, $\ell$, and $\bar{\ell}$, we have
	\begin{align*}
	F\Bigl( \!\int_0^t g_{u,v}(s) & d\ell_v(s) \Bigr) - F\Bigl( \int_0^t g_{u,v}(s)d\bar{\ell}_v(s) \Bigr) \leq  \Vert F \Vert_{\text{Lip}} \Bigl( \int_0^t g_{u,v}(s)d\ell_v(s)  - \int_0^t g_{u,v}(s)d\bar{\ell}_v(s) \Bigr)^+\\
	&= \Vert F \Vert_{\text{Lip}} \Bigl( g(t)(\ell_v(t)-\bar{\ell}_v(t)) + \int_0^t (\ell(s)-\bar{\ell}(s)) d(-g)(s) \Bigr)^+\\
	&\leq g(t) \Vert F \Vert_{\text{Lip}} \bigl(\ell_v(t) - \bar{\ell}_v(t) \bigr)^+ +  \Vert F \Vert_{\text{Lip}} \int_0^t \bigl(\ell_v(s) - \bar{\ell}_v(s)\bigr)^+ d(-g_{u,v})(s) \\
	&\leq   g_{u,v}(0) \Vert F \Vert_{\text{Lip}} \Vert (\ell_v - \bar{\ell}_v)^+\Vert_{t}.
	\end{align*}
	Thus, taking the difference between the two processes $X^{x,\ell}_{u,v}$ and $X^{x,\bar{\ell}}_{u,v}$, as defined in (\ref{X^ell}) coupled through the same Brownian motion, it follows that
	\[
	X^{x,\bar{\ell}}_{u,v}(t) - X^{x,\ell}_{u,v}(t) \leq   g_{u,v}(0) \Vert F \Vert_{\text{Lip}} \Vert (\ell_v - \bar{\ell}_v)^+\Vert_{t}.
	\]
	Therefore, using the continuity of $\ell_v$, for any $s\in[0,t]$, it holds on the event $\{ \tau^{x,\ell}_{u,v} = s  \}$ that
	\[
	X^{x,\bar{\ell}}_{u,v}(s) = X^{x,\bar{\ell}}_{u,v}(s) - X^{x,\ell}_{u,v}(s) \leq  g_{u,v}(0) \Vert F \Vert_{\text{Lip}} \Vert (\ell - \bar{\ell})^+\Vert_{s}^\star.
	\]
	Based on this, we can replicate the arguments from \cite[Prop.~3.1]{HLS18}, by instead conditioning on the value of $\tau^{x,\ell}_{u,v}$ and using the previous inequality, to deduce that
	\begin{align*}
	\mathbb{P}(t\geq \tau^{x,\ell}_{u,v}) & - \mathbb{P}(t\geq \tau^{x,\bar{\ell}}_{u,v} )  \\ &\leq \int_0^t  \mathbb{P}\bigl( \inf_{r\in[s,t]} \textstyle\int_s^r\!\sigma_{u,v}(h) dB_{u,v}(h) > - g_{u,v}(0)\Vert F \Vert_{\mathrm{Lip}} \Vert (\ell - \bar{\ell})^+\Vert_{s}^\star \bigr ) d \mathbb{P}(s\geq \tau^{x,\ell}_{u,v}).
	\end{align*}
	Performing a time change in the Brownian integral, and using that there is a uniform $\epsilon>0$ such that $\epsilon \leq \sigma_{u,v} \leq \epsilon^{-1}$, by Assumption \ref{MV_assump}, it follows from the law of the infimum of a Brownian motion that
	\[
	\mathbb{P}(t\geq \tau^{x,\ell}_{u,v}) - \mathbb{P}(t\geq \tau^{x,\bar{\ell}}_{u,v} )
	\leq
	C \Vert (\ell - \bar{\ell}\,)^+\Vert_{t}^\star\int_0^t (t-s)^{-\frac{1}{2}} d \mathbb{P}(s\geq \tau^{x,\ell}_{u,v})
	\]
	where the constant $C>0$ is independent of $t$, $x$, $u$, and $v$. Now fix any $\tilde{v}\in S(\mathbf{v})$. Multiplying both sides of the above inequality by $\sum_{l=1}^k\tilde{v}_lu_l$ and recalling that this is non-negative for all $u$ in the support of $\varpi$, by Assumption \ref{assump:paramters_emperical}, we can then integrate both sides of the resulting inequality against $d\nu_0(x|u,v) d \varpi(u,v)$, for $(x,u,v)\in \mathbb{R}_+ \times \mathbb{R}^k \times \mathbb{R}^k$, to arrive at
	\[
	\Gamma[\ell]_{\tilde{v}}(t) -  \Gamma[\bar{\ell}\,]_{\tilde{v}}(t)   \leq C \Vert (\ell - \bar{\ell}\,)^+\Vert_{t}^\star\int_0^t (t-s)^{-\frac{1}{2}} d \Gamma[\ell]_{\tilde{v}}(t),
	\]
	for all $t<t_0$, for some fixed numerical constant $C>0$ independent of $t_0$ and $\tilde{v}$. As the right-hand side is positive, this proves the lemma.
\end{proof}

For any $\gamma\in(0,1/2)$, $A>0$, and $t>0$, we define the space $\mathcal{S}(\gamma, A, t) \subset C_t^\star$ by
\begin{equation}
\mathcal{S}(\gamma, A, t) := \bigl\{  \ell \in C\bigl(S(\mathbf{v});H^1(0,t) \bigr) : \ell_v^\prime(t) \leq A t^{-\gamma} \;\; \text{for a.e.} \; t\in[0,t], v\in S(\mathbf{v})     \bigr\},
\end{equation}
which is a complete metric space with the metric inherited from $C_t^\star$. Moreover, we define the map
\[
\hat{\Gamma}[\ell;u,v](t):= \int_0^\infty \mathbb{P}(t\geq \tau^{x,\ell}_{u,v})  V_0(x|u,v) dx,
\]
so that $\Gamma[\ell]_{\tilde{v}}(t)=\sum_{l=1}^k \tilde{v}_l \int_{\mathbb{R}^k\times\mathbb{R}^k} u_l\hat{\Gamma}[\ell;u,v](t) d\varpi(u,v)$. Then, for each $u$ and $v$, we can replicate the arguments from \cite[Sect.~4]{HLS18} for the function $t\mapsto \hat{\Gamma}[\ell;u,v](t)$ in place of the corresponding function considered there. Given $\varpi$ and $V_0(\cdot|u,v)$ satisfying Assumption \ref{X0_assump}, we can thus conclude (arguing by analogy with \cite[Prop.~4.9]{HLS18}), that there exists $A>0$ such that, for any $\varepsilon_0>0$, there is a small enough time $t_0>0$ for which
\begin{equation}\label{eq:Gamma_stable}
\Gamma : \mathcal{S}\bigl(\tfrac{1-\beta}{2}, A+\varepsilon_0, t_0 \bigr) \rightarrow  \mathcal{S}\bigl(\tfrac{1-\beta}{2}, A+\varepsilon_0, t_0 \bigr).
\end{equation}
%where $A$ only depends on $C_\star$ and $x_\star$ from \ref{X0_assump}.

Moreover, by analogy with \cite[Thm.~1.6]{HLS18}, we can deduce from Lemma \ref{lemma1_HLS}  that $\Gamma$ is a contraction on this space for small enough $t_0>0$. Therefore, the small time existence of a continuously differentiable total loss process $\mathbf{L}_v(t)=\sum_{l=1}^k v_l \mathcal{L}_l(t)$ for the system \eqref{CMV} with $\rho=0$ now follows from an application of Banach's fixed point theorem as in the proof of \cite[Thm.~1.7]{HLS18}. Finally, by replicating the bootstrapping argument from the proof of \cite[Cor.~5.3]{HLS18}, we conclude that the regular solution extends until the first time $T_{\star}$ such that the $H^1$ norm of $(\mathcal{L}_1,\ldots,\mathcal{L}_k)$ on $[0,T_\star)$ diverges. This proves the first part of Theorem \ref{thm:idio_reg}.

\subsection{Local uniqueness of c\`adl\`ag solutions with the cascade condition}\label{generic uniqueness}
It remains to verify that the approach of \cite[Thm.~1.8]{HLS18} can be extended to the present setting. This will be possible once we have the two lemmas that we turn to next. The first lemma concerns a family of auxiliary McKean--Vlasov problems given by
\begin{equation}\label{eq:X_eps}
\begin{cases}
X^{x,\epsilon}_{u,v}(t)=x\mathbf{1}_{x\geq\varepsilon} - \frac{\varepsilon}{4} + \int_0^t \!b_{u,v}(s)ds + \int_0^t \!\sigma_{u,v}(s) dB_{u,v}(s) - F\bigl( g_{u,v}(0) \lambda_v^\eps + \int_0^t \!g_{u,v}(s)d\mathbf{L}^\eps_v (s) \bigr)\\[6pt]
\mathbf{L}^{\eps}_{v}(t)= \sum_{l=1}^k v_l \int_{\mathbb{R}_+\times\mathbb{R}^k\times \mathbb{R}^k} \hat{u}_l \mathbb{P}(\tau^{x,\eps}_{\hat{u},
	\hat{v}}\leq t)d\nu_0(x|\hat{u},\hat{v})d\varpi(\hat{u},\hat{v}) \\[6pt] \tau^{x,\eps}_{u,v}=\inf\{t\geq0:{X}^{x,\eps}_{u,v}(t)\leq0\},
\end{cases}
\end{equation}
for $\varepsilon>0$, where $\lambda_{v}^\eps:= \sum_{l=1}^k v_l \int_{(0,\eps)\times\mathbb{R}^k\times\mathbb{R}^k} \hat{u}_l d\nu_0(x|\hat{u},\hat{v})d\varpi(\hat{u},\hat{v}) $.
Similarly to \cite[Sect.~5.2]{HLS18}, the idea is to create a family of approximating solutions, by artificially removing an $\eps$ amount of mass, which is then counted as a loss already at time zero. By rewriting each $\mathbf{L}_v^{\eps}$ as
\begin{equation}\label{eq:eps_loss}
\mathbf{L}_v^{\eps}(t)=\lambda_v^\eps+\tilde{\,\mathbf{L}}_v^{\varepsilon}(t), \quad 
\tilde{\,\mathbf{L}}_v^{\varepsilon}(t) := \sum_{l=1}^k v_l \int_{[\eps,\infty)\times\mathbb{R}^k\times \mathbb{R}^k}\hat{u}_l\mathbb{P}(\tau^{x,\eps}_{\hat{u},\hat{v}}\leq t)d\nu_0(x|\hat{u},\hat{v})d\varpi(\hat{u},\hat{v}),
\end{equation}
we can verify that there indeed exist solutions to the approximating problems \eqref{eq:X_eps}. Moreover, we can obtain regularity estimates for these solutions, on a small time interval, uniformly in $\eps>0$. Recall that we are taking Assumptions 1.1, 1.2, and 1.3 to be satisfied.

\begin{lem}[Uniformly regular approximations]\label{lemma2_HLS}
	There is an $\eps_0>0$ such that (\ref{eq:X_eps}) has a family of solutions $\{\mathbf{L}^\eps\}_{\eps\leq \eps_0}$ which are uniformly regular in the following sense: There exists $A>0$ and $t_0>0$ such that ${\mathbf{L}{}}^{\eps} \in \mathcal{S}(\tfrac{1-\beta}{2}, A, t_0)$ uniformly in $\eps\in(0,\eps_0]$. 
\end{lem}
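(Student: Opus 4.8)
The plan is to follow the strategy of Section \ref{exist_reg_soln}, but now applied uniformly to the $\eps$-perturbed system \eqref{eq:X_eps}, exploiting the decomposition \eqref{eq:eps_loss}. First I would recast \eqref{eq:X_eps} as a fixed-point problem for the \emph{perturbed} loss $\tilde{\,\mathbf{L}}^\eps$: define, for $\ell \in C_T^\star$ with $\ell_v(0)=0$ and $s\mapsto \ell_v(s)$ increasing, the map $\Gamma^\eps$ by
\[
\Gamma^\eps[\ell]_v(t) := \sum_{l=1}^k v_l \int_{[\eps,\infty)\times \mathbb{R}^k\times\mathbb{R}^k} \hat u_l\, \mathbb{P}\bigl( t\geq \tau^{x,\eps,\ell}_{\hat u,\hat v}\bigr)\, d\nu_0(x|\hat u,\hat v)\, d\varpi(\hat u,\hat v),
\]
where $X^{x,\eps,\ell}_{u,v}(t) = x - \tfrac{\eps}{4} + \int_0^t b_{u,v}(s)ds + \int_0^t \sigma_{u,v}(s)dB_{u,v}(s) - F\bigl( g_{u,v}(0)\lambda_v^\eps + \int_0^t g_{u,v}(s)d\ell_v(s)\bigr)$, and a solution to \eqref{eq:X_eps} corresponds to a fixed point $\tilde{\,\mathbf{L}}^\eps = \Gamma^\eps[\tilde{\,\mathbf{L}}^\eps]$. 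The comparison estimate of Lemma \ref{lemma1_HLS} applies verbatim to $\Gamma^\eps$ in place of $\Gamma$, since the additional constant shift $g_{u,v}(0)\lambda_v^\eps$ and the fixed downward shift $\eps/4$ inside $X^{x,\eps,\ell}$ do not affect the difference argument (they cancel when subtracting two processes driven by the same Brownian motion); so $\Gamma^\eps$ is a contraction on $\mathcal S(\tfrac{1-\beta}{2}, A+\varepsilon_0, t_0)$ for $t_0$ small. This yields existence of $\tilde{\,\mathbf{L}}^\eps$, hence of $\mathbf{L}^\eps = \lambda_v^\eps + \tilde{\,\mathbf{L}}^\eps$, for each fixed $\eps$.

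The substance of the lemma is the \emph{uniformity} in $\eps$, and here I would argue exactly as in the derivation of \eqref{eq:Gamma_stable}, i.e.\ by analogy with \cite[Prop.~4.9]{HLS18}, but checking that the constants $A$ and $t_0$ can be chosen independently of $\eps\in(0,\eps_0]$. The key observation is that the density of the \emph{initial condition} of $X^{x,\eps,\ell}_{u,v}$ — namely the law of $x\mathbf 1_{x\geq\eps} - \eps/4$ under $d\nu_0(x|u,v)$ restricted to $x\geq\eps$ — is still controlled by the bound $V_0(x|u,v)\leq C_1 x^\beta$ near the origin from Assumption \ref{X0_assump}, uniformly in $\eps$: removing the mass on $(0,\eps)$ and translating by $-\eps/4$ can only improve (push away from $0$) the near-boundary density, and in any case the tail bound $\Vert V_0(\cdot|u,v)\Vert \leq C_2$ and the moment bound $\int x\,d\nu_0 \leq C_3$ survive. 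Consequently the short-time estimates on $t\mapsto \hat\Gamma^\eps[\ell;u,v](t) := \int_{[\eps,\infty)} \mathbb{P}(t\geq \tau^{x,\eps,\ell}_{u,v})V_0(x|u,v)dx$ that underpin the Picard/Grönwall bootstrap in \cite[Sect.~4]{HLS18} hold with constants depending only on $C_1,C_2,C_3,\beta$, on the uniform ellipticity bounds for $\sigma$, and on $\Vert F\Vert_{\mathrm{Lip}}$, $\sup g_{u,v}(0)$ — all of which are $\eps$-free by Assumptions \ref{MV_assump} and \ref{assump:paramters_emperical}. It is also important that $\lambda_v^\eps \to 0$ as $\eps\downarrow 0$ and, in particular, is bounded uniformly in $\eps\le \eps_0$, so the constant-shift term $F(g_{u,v}(0)\lambda_v^\eps)$ inside $X^{x,\eps,\ell}$ is uniformly bounded and does not degrade the estimates. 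Assembling these uniform bounds, one obtains $A>0$ and $t_0>0$ (both independent of $\eps$) with $\Gamma^\eps: \mathcal S(\tfrac{1-\beta}{2}, A, t_0) \to \mathcal S(\tfrac{1-\beta}{2}, A, t_0)$ for all $\eps \le \eps_0$, and the fixed point $\tilde{\,\mathbf{L}}^\eps$ — hence $\mathbf{L}^\eps$, since $\lambda_v^\eps$ is constant in time and thus does not contribute to the derivative bound defining $\mathcal S$ — lies in $\mathcal S(\tfrac{1-\beta}{2}, A, t_0)$ uniformly in $\eps$.

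The main obstacle I anticipate is precisely the uniform-in-$\eps$ control of the near-boundary behaviour: one must verify that the truncation-and-shift operation $x \mapsto x\mathbf 1_{x\ge\eps} - \eps/4$ produces an initial law whose density still satisfies a bound of the form $\leq C_1' x^\beta$ near the origin with $C_1'$ independent of $\eps$ (the shift by $-\eps/4$ moves some mass to negative values, counted as immediate loss, and the point is that what remains on the positive half-line is no worse than the original). Once this is pinned down, the rest is a careful but routine transcription of \cite[Sect.~4--5]{HLS18}, tracking that no constant secretly depends on $\eps$; I would simply cite \cite[Prop.~4.9, Thm.~1.6, Cor.~5.3]{HLS18} for the analytic heart of the argument, as the excerpt already does for the unperturbed case.
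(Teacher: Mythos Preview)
Your approach is essentially the paper's: recast \eqref{eq:X_eps} as a fixed-point problem for $\tilde{\,\mathbf{L}}^\eps$, note that the comparison argument of Lemma~\ref{lemma1_HLS} transfers unchanged, and obtain the uniformity by checking that the effective initial density still satisfies a bound $\leq C' y^\beta$ near the origin with $C'$ independent of $\eps$. The paper implements exactly this via the explicit change of variables $y = x - \eps/4 - F(g_{u,v}(0)\lambda_v^\eps)$, producing a shifted density $V_0^\eps(y|u,v)$ to which the construction of Section~\ref{exist_reg_soln} applies directly with $\eps$-free constants.

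There is, however, one place where your reasoning is not sharp enough to close the argument. You say that the constant-shift term $F(g_{u,v}(0)\lambda_v^\eps)$ is ``uniformly bounded and does not degrade the estimates'', but uniform boundedness is not the relevant property here. The total downward shift of the initial position is $\eps/4 + F(g_{u,v}(0)\lambda_v^\eps)$, applied to mass supported on $[\eps,\infty)$; if this shift is not strictly smaller than $\eps$ (uniformly as $\eps\downarrow 0$), then the shifted support can reach the origin with density of order one, and the $y^\beta$ boundary control fails. What you actually need is the quantitative estimate $F(g_{u,v}(0)\lambda_v^\eps) \leq \eps/4$ for all sufficiently small $\eps$. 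The paper obtains this by observing that Assumption~\ref{X0_assump} gives $\lambda_v^\eps \leq C_\star \eps^{1+\beta}/(1+\beta)$ uniformly in $v$, and since $F$ is Lipschitz with $F(0)=0$ this yields $F(g_{u,v}(0)\lambda_v^\eps) = O(\eps^{1+\beta}) \leq \eps/4$ for $\eps\leq\eps_0$. With this in hand the shifted support starts at $y\geq \eps/2$, and on that set $V_0(y+\eps/4+F(\cdots)|u,v) \leq C_1(y+\eps/2)^\beta \leq 2^\beta C_1 y^\beta$, which is precisely the uniform boundary control you correctly identified as the crux.
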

\begin{proof}First of all, we can ensure that $\lambda_v^\eps \leq C_\star \eps^{1+\beta}/(1+\beta)$ uniformly in $v$, for small enough $\eps>0$, by Assumption \ref{X0_assump}, and clearly $F(x)=o(x^{1/(1+\beta)})$ as $x\downarrow0$, since $F$ is Lipschitz with $F(0)=0$, by Assumption \ref{MV_assump}. Hence there exists $\eps_0>0$ such that $F(g_{u,v}(0)\lambda_v^\eps)\leq \eps/4$ for all $\eps\in(0,\eps_0)$.  Next, using (\ref{eq:eps_loss}) and making the change of variables $y=x-\eps/4 - F(g_{u,v}(0)\lambda_v^\eps)$ in (\ref{eq:X_eps}) we obtain the equivalent formulation
	\begin{equation*}
	\begin{cases}
	\tilde{X}^{y,\eps}_{u,v}(t)=y+\int_0^t b_{u,v}(s)ds + \int_0^t\sigma_{u,v}(s) dB_{u,v}(s) - F\bigl( g_{u,v}(0)\lambda_v^\eps + \int_0^t g_{u,v}(s)d \tilde{\,\mathbf{L}}_v^{\varepsilon}(s) \bigr) + F\bigl( g_{u,v}(0)\lambda_v^\eps \bigr)
	\\[4pt]
	\tilde{\,\mathbf{L}}_v^{\varepsilon}(t)= \sum_{l=1}^k v_l \int_{\mathbb{R}_+\times\mathbb{R}^k\times \mathbb{R}^k}\hat{u}_l\mathbb{P}(\tilde{\tau}^{y,\eps}_{\hat{u},\hat{v}}\leq t)V^\eps_0(y|\hat{u},\hat{v})dyd\varpi(\hat{u},\hat{v})\\[4pt]
	V_0^\eps(y|u,v)=V_0\bigl(y+\frac{\eps}{4} + F(g_{u,v}(0)\lambda_v^\eps) \bigr) \mathbf{1}_{\{ y + \frac{\eps}{4} + F(g_{u,v}(0)\lambda_v^\eps)  \geq \eps  \}}
	\\[4pt]
	\tilde{\tau}^{y,\eps}_{u,v}=\inf \{ t\geq 0 : \tilde{X}^{y,\eps}_{u,v}(t) \leq 0 \} 
	\end{cases}
	\end{equation*}
	Now take $\eps_0 \leq x_\star$. Recalling that $F(g_{u,v}(0)\lambda_v^\eps)\leq \eps/4$ for all $\eps\in(0,\eps_0)$, we can then observe that
	\begin{align*}
	V_0^\eps(y|u,v) & \leq C_\star \bigl( y+\eps/4 + F(g_{u,v}(0)\lambda_v^\eps)  \bigr)^\beta \mathbf{1}_{y+\eps/4 + F(g_{u,v}(0)\lambda_v^\eps) \geq \eps} \\
	& \leq C_\star (y+\eps/2 )^\beta \mathbf{1}_{y \geq \eps/2} \leq 2^\beta C_\star y^\beta
	\qquad \text{for all} \quad y<x_\star/2,
	\end{align*}
	uniformly in $u$, $v$, and $\eps \in (0,\eps_0)$. Therefore, for each $\eps\in (0,\eps_0)$, we can indeed construct a regular solution $\tilde{\,\mathbf{L}}^{\varepsilon}$ to the above system by the first part of Theorem \ref{thm:idio_reg} proved in Section \ref{exist_reg_soln}. Moreover, since the boundary control on $V_0^\eps(\cdot|u,v)$ is uniform in $\eps\in(0,\eps_0)$, uniformly in $u$ and $v$, it follows from the fixed point argument in Section \ref{exist_reg_soln} that the regularity of the solutions $\tilde{\,\mathbf{L}}^\eps$ is also uniform in $\eps\in(0,\eps_0)$.
	By (\ref{eq:eps_loss}), the uniform regularity of the original family ${\mathbf{L}{}}^\eps$ follows a fortiori from that of $\tilde{\,\mathbf{L}}^{\eps}$, and thus the proof is complete.
\end{proof}

Armed with Lemma \ref{lemma2_HLS}, we can now proceed to the final ingredient of the general uniqueness result, namely the so-named monotonicity and trapping argument from \cite[Sect.~5.2]{HLS18}.

\begin{lem}[Strict domination with vanishing envelope]\label{lemma3_HLS} 
	Let $\mathbf{L{}}:S(\mathbf{v})\rightarrow L^\infty(0,T)$ be given by
	\[
	\mathbf{L{}}_v(t):=\sum_{l=1}^k v_l \mathcal{L}_l = \sum_{l=1}^k v_l  \int_{\mathbb{R}_+ \times \mathbb{R}^k \times \mathbb{R}^k} \hat{u}_l \mathbb{P}( t\geq \tau^x_{\hat{u},\hat{v}}) d\nu_0(x|\hat{u},\hat{v})d\varpi(\hat{u},\hat{v})
	\]
	for a generic solution to \eqref{CMV} with $\rho=0$, and suppose there are no jumps of $(\mathcal{L}_1,\ldots,\mathcal{L}_k)$ on $[0,t_0)$ so $s\mapsto \mathbf{L{}}_v(s)$ is continuous on $[0,t_0)$ for all $v$. If $\mathbf{L}^\eps$ is a continuous solution of \eqref{eq:X_eps} on $[0,t_0)$, then we have strict domination $\mathbf{L}^\eps_v> \mathbf{L{}}_v$ on $[0,t_0)$, for all $v \neq 0$.  Moreover, if $\mathbf{L}$ is differentiable on $[0,t_0)$ and the family $\{ \mathbf{L}^\eps \}$ is uniformly regular on $[0,t_0)$ in the sense of Lemma \ref{lemma2_HLS}, then there is another time $0<t_1\leq t_0$ such that a vanishing envelope is guaranteed on $[0,t_1]$, namely $\Vert \mathbf{L} - \mathbf{L}^\eps \Vert_{t_1}^\star \rightarrow 0$ as $\varepsilon\rightarrow 0$.
\end{lem}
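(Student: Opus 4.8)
The plan is to run the monotonicity-and-trapping scheme of \cite[Sect.~5.2]{HLS18}, keeping track of the $v$-indexed family of loss maps and of the heterogeneous coefficients, and then to obtain the vanishing envelope by a short Gr\"onwall estimate built on Lemma~\ref{lemma1_HLS} and the uniform regularity of Lemma~\ref{lemma2_HLS}. For the \emph{strict domination}, I would first couple the two particle families through the same $B_{u,v}$ and show: whenever $\mathbf{L}^\eps_v(s)\ge\mathbf{L}_v(s)$ for all $s\le t$ and $v\in S(\mathbf{v})$, one has $X^{x,\eps}_{u,v}(s)\le X^{x}_{u,v}(s)-\eps/4$ for all $s\le t$ and $x\ge\eps$. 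This follows from the integration-by-parts identity used in the proof of Lemma~\ref{lemma1_HLS} — which is precisely why $g_{u,v}(0)\lambda^\eps_v$ was inserted in \eqref{eq:X_eps}, since it cancels the boundary term and leaves $g_{u,v}(0)\lambda^\eps_v+\int_0^s g_{u,v}d\tilde{\,\mathbf{L}}^\eps_v\ge\int_0^s g_{u,v}d\mathbf{L}_v$ — together with $F$ non-decreasing and the initial datum of $X^{x,\eps}_{u,v}$ lying a further $\eps/4$ below that of $X^{x}_{u,v}$. Consequently $\tau^{x,\eps}_{u,v}$ is no later than the first time $X^{x}_{u,v}$ reaches $\eps/4$, which by continuity of $X^x_{u,v}$ on $[0,t_0)$ (guaranteed by the no-jump hypothesis on $\mathbf{L}$) is strictly before $\tau^x_{u,v}$; weighting the resulting inequality of hitting probabilities by $\hat{u}_l\ge0$ (Assumption~\ref{assump:paramters_emperical}) and $v_l$, and adding the mass $\lambda^\eps_v$ removed at time $0$, yields for $s\le t$
\[
\mathbf{L}^\eps_v(s)-\mathbf{L}_v(s)\;\ge\;\sum_{l=1}^k v_l\!\!\int_{(0,\eps)\times\mathbb{R}^k\times\mathbb{R}^k}\!\!\hat{u}_l\,\mathbb{P}\bigl(\tau^{x}_{\hat{u},\hat{v}}>s\bigr)\,d\nu_0(x|\hat{u},\hat{v})\,d\varpi(\hat{u},\hat{v})\;\ge\;0,
\]
which is strictly positive once $s>0$ and $\lambda^\eps_v>0$, because $\mathbb{P}(\tau^x_{\hat{u},\hat{v}}>s)>0$ for all $x>0$, $s<\infty$ by the uniform ellipticity in Assumption~\ref{MV_assump}. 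The trapping step then closes it: with $\vartheta:=\sup\bigl\{t\in[0,t_0):\mathbf{L}^\eps_v(s)\ge\mathbf{L}_v(s)\text{ for all }s\le t,\ v\in S(\mathbf{v})\bigr\}$, attained by continuity, the displayed bound upgrades $\ge$ to $>$ on $(0,\vartheta]$ for every $v$ with $\lambda^\eps_v>0$, and since $(t,v)\mapsto\mathbf{L}^\eps_v(t)-\mathbf{L}_v(t)$ is affine in $v$ hence jointly continuous on $[0,t_0)\times S(\mathbf{v})$, a value $\vartheta<t_0$ would contradict the definition of $\vartheta$; thus $\mathbf{L}^\eps_v>\mathbf{L}_v$ on $[0,t_0)$ for all $v\ne0$ (both sides vanishing identically for any degenerate $v$).

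For the \emph{vanishing envelope}, I would add the hypotheses that $\mathbf{L}$ is differentiable on $[0,t_0)$ and that $\{\mathbf{L}^\eps\}_{\eps\le\eps_0}\subset\mathcal{S}(\tfrac{1-\beta}{2},A,t_0)$ from Lemma~\ref{lemma2_HLS}. Let $\Gamma^\eps$ be the fixed-point map of \eqref{eq:X_eps}, defined exactly as $\Gamma$ in \eqref{eq:Gamma_fp_map} but with the $\eps$-modified dynamics, so that $\mathbf{L}^\eps=\Gamma^\eps[\mathbf{L}^\eps]$ while $\mathbf{L}=\Gamma[\mathbf{L}]$, and decompose $\mathbf{L}^\eps_v-\mathbf{L}_v=(\Gamma^\eps[\mathbf{L}^\eps]_v-\Gamma^\eps[\mathbf{L}]_v)+(\Gamma^\eps[\mathbf{L}]_v-\mathbf{L}_v)$. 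The comparison estimate of Lemma~\ref{lemma1_HLS}, applied to the $\eps$-system with $\ell=\mathbf{L}^\eps$ (continuous), $\bar\ell=\mathbf{L}$ and integrator $d\Gamma^\eps[\mathbf{L}^\eps]_v=d\mathbf{L}^\eps_v$, together with the uniform bound $d\mathbf{L}^\eps_v(s)\le A\,s^{-(1-\beta)/2}\,ds$, controls the first bracket by $C\,\kappa(t)\,\Vert\mathbf{L}^\eps-\mathbf{L}\Vert_t^\star$ with $\kappa(t):=A\,t^{\beta/2}\,\mathrm{B}(\tfrac12,\tfrac{1+\beta}{2})\to0$ as $t\downarrow0$, uniformly in $\eps\le\eps_0$. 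The second bracket tends to $0$ in $C^\star_{t_0}$ as $\eps\to0$, because the $\eps$-modifications in \eqref{eq:X_eps} — the shift $\eps/4$, the removed mass $\lambda^\eps_v$, and the truncated density $V^\eps_0$ — vanish uniformly over the compact set $S(\mathbf{u},\mathbf{v})$ and the hitting probabilities depend continuously on this data. Choosing $t_1\in(0,t_0)$ with $C\kappa(t_1)\le\tfrac12$, and using $\mathbf{L}^\eps\ge\mathbf{L}$ from the first part (so that $\Vert\mathbf{L}^\eps-\mathbf{L}\Vert_{t_1}^\star=\sup_{v}\sup_{s\le t_1}(\mathbf{L}^\eps_v(s)-\mathbf{L}_v(s))$), taking $\sup_v\sup_{s\le t_1}$ gives $\Vert\mathbf{L}^\eps-\mathbf{L}\Vert_{t_1}^\star\le\tfrac12\Vert\mathbf{L}^\eps-\mathbf{L}\Vert_{t_1}^\star+\Vert\Gamma^\eps[\mathbf{L}]-\mathbf{L}\Vert_{t_1}^\star$, i.e.\ $\Vert\mathbf{L}^\eps-\mathbf{L}\Vert_{t_1}^\star\le2\Vert\Gamma^\eps[\mathbf{L}]-\mathbf{L}\Vert_{t_1}^\star\to0$.

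The main obstacle is the strict domination together with its propagation: turning the pathwise $\eps/4$-gap into a \emph{strict} gap between the loss maps relies on the killed diffusion having $\mathbb{P}(\tau^x_{u,v}>s)>0$ (uniform ellipticity, Assumption~\ref{MV_assump}) and on the no-jump hypothesis for the reach-$\eps/4$-before-$0$ argument, while ruling out that the strict ordering is ever lost on $[0,t_0)$ forces the trapping to be run with the joint (affine-in-$v$) continuity of $(t,v)\mapsto\mathbf{L}^\eps_v(t)-\mathbf{L}_v(t)$ over the compact index set $S(\mathbf{v})$, in place of the single scalar loss of \cite{HLS18}. Once that is in place, the vanishing-envelope step is essentially mechanical, given the uniform regularity supplied by Lemma~\ref{lemma2_HLS}.
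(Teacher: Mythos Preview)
Your argument is correct and, for the \emph{strict domination}, essentially identical to the paper's: both exploit the integration-by-parts identity to get the pathwise $\eps/4$ gap, then run a first-touching/trapping contradiction. The only cosmetic difference is the source of the strict positivity --- the paper uses $\mathbb{P}\bigl(\inf_{r\le t}X^x_{u,v}(r)\in(0,\eps/4]\bigr)>0$ (i.e.\ mass that the diffusion carries into the boundary layer), whereas you use the initial mass on $(0,\eps)$ together with $\mathbb{P}(\tau^x_{u,v}>s)>0$. Either works.

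For the \emph{vanishing envelope} you take a genuinely different route. The paper does it in one step: from the strict domination it records that on $\{\tau^{x,\eps}_{u,v}=s\}$ one has $X^x_{u,v}(s)\le 5\eps/4 + g_{u,v}(0)\Vert F\Vert_{\mathrm{Lip}}\Vert\mathbf{L}^\eps-\mathbf{L}\Vert^\star_s$, then retraces \cite[Prop.~3.1 and Lemma~5.7]{HLS18} to obtain directly
\[
\mathbf{L}^\eps_v(t)-\mathbf{L}_v(t)\ \le\ C\bigl(\tfrac{5\eps}{4}+\Vert\mathbf{L}^\eps-\mathbf{L}\Vert^\star_t\bigr)\int_0^t(t-s)^{-1/2}\,d\mathbf{L}^\eps_v(s),
\]
so the Gr\"onwall closes with the \emph{uniformly regular} integrator $d\mathbf{L}^\eps_v$ and the explicit $O(\eps)$ remainder. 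Your decomposition through the intermediate $\Gamma^\eps[\mathbf{L}]$ achieves the same thing but splits the $O(\eps)$ part (your second bracket) from the contraction part (your first bracket); the price is that the second bracket $\Gamma^\eps[\mathbf{L}]_v-\mathbf{L}_v$ now needs its own estimate, which you supply by a continuity/dominated-convergence argument rather than via the regular integrator. That works (since the pathwise gap between $X^{x,\eps,\mathbf{L}}$ and $X^{x,\mathbf{L}}$ is $O(\eps)$ uniformly, and the infimum of $X^{x,\mathbf{L}}$ has no atom at $0$), but it is slightly less direct than the paper's single inequality, whose integrator is already known to be uniformly regular by Lemma~\ref{lemma2_HLS}.
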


\begin{proof}
	Noting that $\mathbf{L}^\eps_v(0)=\lambda_v^\epsilon > 0=\mathbf{L{}}_v(0)$ for $v\neq0$, towards a contradiction we let $t \in(0, t_0)$ be the first time $\mathbf{L}^\eps_v(t) = \mathbf{L}_v(t)$ for some $v\neq0$. Then it holds for any $s<t$ that
	\begin{align*}
	g_{u,v}(0)\lambda_v^\eps + \int_0^s \!g_{u,v}(r)d\mathbf{L}^\eps_v (r) & = \int_0^s \mathbf{L}^\eps_v(r) d(-g_{u,v})(r) + g_{u,v}(s)\mathbf{L}^\eps_v(s) \nonumber\\
	& \geq  \int_0^s \mathbf{L}_v(r) d(-g_{u,v})(r) + g_{u,v}(s)\mathbf{L{}}_v(s) = \int_0^s g_{u,v}(r)d\mathbf{L{}}_v(r).
	\end{align*}
	and, since $F$ is increasing, we thus have
	\begin{equation}\label{eq:eps/4_1}
	X^x_{u,v}(s)-X^{x,\eps}_{u,v}(s) = x\mathbf{1}_{x<\varepsilon}+\frac{\varepsilon}{4} +  F\Bigl( g_{u,v}(0) \lambda_v^\eps + \int_0^s \!g_{u,v}(s)d\mathbf{L}^\eps_v \Bigr) - F\Bigl(\int_0^s \!g_{u,v}(s)d\mathbf{L{}}_v \Bigr) \geq \frac{\varepsilon}{4},
	\end{equation}
	for all $s\in(0,t)$.
	Arguing as in the proof of \cite[Lemma 5.6]{HLS18}, it follows from (\ref{eq:eps/4_1})  that
	\begin{align*}
	\mathbf{L}_v^\eps(t) & \geq \mathbf{L{}}_v(t) + \sum_{l=1}^k v_l  \int_{\mathbb{R}_+ \times \mathbb{R}^k \times \mathbb{R}^k} \hat{u}_l \mathbb{P} \Bigl( \inf_{r\in[0,t] } X^x_{u,v}(s) \in (0, \eps/4 ] \Bigr) d\nu_0(x|\hat{u},\hat{v})d\varpi(\hat{u},\hat{v}) > \mathbf{L{}}_v(t),
	\end{align*}
	which contradicts the definition of $t$, thus proving the first claim.
	
	For the second claim, we can now rely on the fact that $\mathbf{L}^\eps_v>\mathbf{L{}}_v$ on $[0,t_0)$ for all $v\neq 0$. Consequently, since $X^{x,\varepsilon}_{u,v}(s)=0$ on the event $\{ \tau^{x,\varepsilon}_{u,v}=s \}$, we deduce that, on this event,
	\begin{equation}\label{eq:envelope}
	X^{x}_{u,v}(s) = X^{x}_{u,v}(s) - X^{x,\eps}_{u,v}(s) \leq \varepsilon + \frac{\varepsilon}{4} + g_{u,v}(0) \Vert F \Vert_{\mathrm{Lip}} \Vert \mathbf{L}^\eps_v -\mathbf{L{}}_v \Vert_s,
	\end{equation}
	where the inequality follows by the equality in (\ref{eq:eps/4_1}) and the same estimate as in the proof of Lemma \ref{lemma1_HLS}. From here, (\ref{eq:envelope}) allows us to retrace the last steps of \cite[Proposition 3.1]{HLS18} with 
	 the term $g_{u,v}(0) \Vert F \Vert_{\mathrm{Lip}} \Vert \mathbf{L}^\eps -\mathbf{L{}} \Vert_s^\star$ in place of the term $\alpha(\ell_s -\bar{\ell}_s)$ and hence we can replicate the proof of \cite[Lemma 5.7]{HLS18} to verify the second claim.
\end{proof}

Based on Lemmas \ref{lemma1_HLS} and \ref{lemma3_HLS}, we can now give the proof of Theorem \ref{thm:idio_reg}. First of all, if $\mathbf{L}$ corresponds to the regular solution obtained at the end of Section \ref{exist_reg_soln} and $\widetilde{\mathbf{L}}$ corresponds to an arbitrary c\`adl\`ag solution on $[0,T_\star]$, then the same arguments as in Lemma \ref{lemma1_HLS} yield
	\[
\bigl(\mathbf{L}_v(t) - \widetilde{\mathbf{L}}_v(t) \bigr )^+ \leq C \Vert (\mathbf{L} - \widetilde{\mathbf{L}}\,)^+\Vert_{t}^\star \int_0^t (t-s)^{-\frac{1}{2}} d \mathbf{L}_v(s),
\]
for all $v\in S(\mathbf{v})$, for all $t\in[0,T_\star)$. Arguing as in \cite[Proof of Thm.~1.6]{HLS18}, we can thus use the regularity of $\mathbf{L}_v(t)$ as the integrator to deduce that $\mathbf{L}_v(t) \leq \widetilde{\mathbf{L}}_v(t)$ for all $t\in[0,T_\star)$ and all $v\in S(\mathbf{v})$. Let $\tilde{t}>0$ be such $s\mapsto\widetilde{\mathbf{L}}_v(s)$ is continuous on $[0,\tilde{t})$ for all $v\in S(\mathbf{v})$ (possible since we can consider the first time one of the $k$ right-continuous processes $\widetilde{\mathcal{L}}_l$ jumps, and each of these has $\widetilde{\mathcal{L}}_l(0)=0$). Then the first part of Lemma \ref{lemma3_HLS} gives $\mathbf{L}_v(t) \leq \widetilde{\mathbf{L}}_v(t) < \mathbf{L}^\epsilon_v(t) $ for all $t\in[0,\tilde{t})$ and all $v\in S(\mathbf{v})$. Applying the second part of Lemma \ref{lemma3_HLS}, and sending $\varepsilon \downarrow 0$, we then get a time $\tilde{t}_1$ such that  $\mathbf{L}=\widetilde{\mathbf{L}}$ on $[0,\tilde{t}_1)$. Suppose $\tilde{t}_1 < T_\star$ (otherwise, we are done). By the regularity of $\widetilde{\mathbf{L}}$ on $[0,\tilde{t}_1)$ we can conclude as in \cite[Lemma 5.1]{HLS18} that the density of each $\mathbb{P}(\widetilde{X}_{u,v}(t-)\in dx,\, t\leq \widetilde{\tau}_{u,v})$ is H{\"o}lder continuous at zero, and hence it is straightforward to check that we get
\[\lim_{\varepsilon\downarrow 0}  \lim_{m\rightarrow \infty}\Delta_{\tilde{t}_1,v}^{(m,\varepsilon)}=0
\]
in \eqref{eq:mf_cascade_cond}, for all $v\in S(\mathbf{v})$. In turn, $\Delta \widetilde{\mathbf{L}}(\tilde{t}_1)=0$, since the statement of the theorem assumes that the jump sizes are smaller than or equal to those given by the cascade condition. This gives $\mathbf{L}=\widetilde{\mathbf{L}}$ on $[0,\tilde{t}_1]$. Using that we now have a nicely behaved density of $\mathbb{P}(\widetilde{X}_{u,v}(t)\in dx,\, t< \widetilde{\tau}_{u,v})$, we can proceed as in \cite[Proof of Corollary 5.3]{HLS18} and bootstrap the uniqueness argument to get uniqueness on all of $[0,T_\star)$. This completes the proof of Theorem \ref{thm:idio_reg}.

\section{Some directions for future research}

The results presented in this paper raise a series of interesting questions for future research.
As we have seen, both the finite particle system and the mean-field problem suffer from non-uniqueness. In the finite setting, we have argued that this is naturally resolved by working with the cascade condition \eqref{particle_cascade_cond1}, meaning that we select the greatest c\`adl\`ag clearing capital solution for the finite interbank system. In the mean-field setting, however, the issue of non-uniqueness is less clear. It would be interesting to understand if it is possible to show convergence of \eqref{particle_cascade_cond1} as $n\rightarrow \infty$, and if the limit points from Theorem \ref{thm:mf_conv} can be shown to satisfy \eqref{eq:mf_cascade_cond}. Also, there is the question of whether there can be solutions that are not limit points, similarly to \cite{nutz20}, where a mean-field game problem was shown to have various solutions failing to be limits of the finite player problems.

As we saw in Theorem \ref{thm:global_unique}, uniqueness becomes much easier when jump discontinuities can be ruled out. However, this cannot always be dismissed, and then uniqueness remains out of reach, even with a selection principle such as \eqref{eq:mf_cascade_cond}. Theorem \ref{thm:idio_reg} suggest that the methods of \cite{DNS21}, suitably adapted, could yield global uniqueness under \eqref{eq:mf_cascade_cond}. With the common noise, this is no longer the case, as several parts of the arguments in \cite{DNS21} break down. Fine results related to H{\"o}lder boundary regularity of the Dirichlet problem for one-dimensional stochastic partial differential equations with transport noise, such as \cite[Theorem 4.1]{krylov}, could prove to be key for making progress in this direction. To overcome the difficulty of jumps and non-uniqueness, it is also possible to consider a more gradual realisation of the financial contagion as in \cite{HS19}. Certainly, the convergence arguments presented here easily transfer to the corresponding heterogeneous formulation of \cite{HS19}. In this case, it remains of interest to understand the jumps as limiting cases where the time line of a continuously unfolding cascade is collapsed to zero.

Another interesting direction is to utilise the mean-field problem \eqref{CMV1} together with \eqref{eq:mf_cascade_cond} as a way of computing risk measures based on partial information from real world financial systems. In practice, the links between financial institutions are far from publicly known, so it could be practical to work with a distribution $\varpi$ on $\mathbb{R}^k\times \mathbb{R}^k$, for a not too large level of granularity $k$, such that the dot product structure $u\cdot v$ gives a satisfactory estimate of one's belief about the heterogeneity of the links. Furthermore, the convergence to the mean-field problem then justifies only attempting to estimate average parameters for different subsets of institutions, corresponding to different parts of the support of $\varpi$. Exploiting the structural modelling, it may be feasible to exploit equity data for estimating such average parameters for the drift, volatility, and correlation to a common factor. In contrast to the homogeneous problem, one can then look at notions of systemic importance. For example, building on the influential CoVaR methodology \cite{AB16}, one could study the value at risk or expected shortfall among peripheral parts of the system conditional on a period of steep increase in the probability of default within a core fraction.

Finally, we wish to stress that the cascade condition \eqref{eq:mf_cascade_cond} is instructive for numerical implementation. From one time-step to the next, there is inevitably a small increase in the probability of having defaulted before or at the current time-step for each of the mean-field particles, before factoring in any contagion. Using the size of these increases as the first input, we can then iterate \eqref{eq:mf_cascade_cond} for a finite number of times until the resulting change is below some acceptable threshold, thus giving the total change in the contagion processes before proceeding to the next time-step. Between time-steps, we can effectively think of all the mean-field particles as simply sitting in a heat bath, so all we have to do is convolve with a family of heat kernels and then adjust the heat profiles at each time step according to the above iterative procedure for \eqref{eq:mf_cascade_cond}. This yields an efficient way to simulate directly the mean-field system. It would be interesting to understand what the gains are in terms of computational complexity compared to simulating the finite system, as we let the number of particles increase.

\subsection*{Acknowledgements}
We wish to thank two anonymous referees, the AE, and the Co-Editor for several useful suggestions. We are particularly grateful to one of the referees for highlighting the lacuna in \cite[Lemma 3.13]{LS18a}.

%\bibliographystyle{plain}
%\bibliographystyle{apalike}
%\bibliographystyle{plainnat}
%\bibliography{bibtex2}
\bibliographystyle{alpha}

\end{document}